\def \dis {\displaystyle}
\def \NN {\mathbb N}
\def \RR {\mathbb R}
\def \CC {\mathbb C}
\def \A {\mathcal{A}}
\def \B {\mathcal{B}}
\def \E {\mathcal{E}}
\def \H {\mathcal{H}}
\def \L {\mathcal{L}}
\def \M {\mathcal{M}}
\def \R {\mathcal{R}}
\def \ecart {\noalign{\medskip}}
\theoremstyle{definition}
\newtheorem{Th}{Theorem}[section]
\newtheorem{Prop}[Th]{Proposition}
\newtheorem{Lem}[Th]{Lemma}
\newtheorem{Cor}[Th]{Corollary}
\newtheorem{Def}[Th]{Definition}
\newtheorem{Rem}[Th]{Remark}
\def \refs #1{section~\ref{#1}}
\def \refD #1{Definition~\ref{#1}}
\def \refT #1{Theorem~\ref{#1}}
\def \refL #1{Lemma~\ref{#1}}
\def \refC #1{Corollary~\ref{#1}}
\def \refP #1{Proposition~\ref{#1}}
\def \refR #1{Remark~\ref{#1}}
\title{Generation of analytic semigroups for some generalized diffusion operators in $L^p$-spaces}
\author{Rabah Labbas, St\'ephane Maingot \& Alexandre Thorel \\ \ecart
{\scriptsize Normandie Univ, UNIHAVRE, LMAH, FR-CNRS-3335, ISCN, 76600 Le Havre, France.}\\
{\scriptsize rabah.labbas@univ-lehavre.fr, stephane.maingot@univ-lehavre.fr, alexandre.thorel@univ-lehavre.fr}}
\date{}
\begin{document}
\maketitle
\begin{abstract}
We consider some generalized diffusion operators of fourth order and their corresponding abstract Cauchy problem. Then, using semigroups techniques and functional calculus, we study the invertibility and the spectral properties of each operator. Therefore, we prove that we have generation of $C_0$-semigroup in each case. We also point out when these semigroups become analytic. \\
\textbf{Key Words and Phrases}: Fourth order boundary value problem, sectorial operators, spectral estimates, functional calculus, generation of analytic semigroups. \\
\textbf{2020 Mathematics Subject Classification}: 35B65, 35C15, 35J40, 47A60, 47D06. 
\end{abstract} 

\section{Introduction}

In this article, we study five abstract problems modelling a class of generalized diffusion problems set on a cylindrical space $\Omega = (a,b)\times \omega$ where $\omega$ is a bounded regular open set of $\RR^{n-1}$, $n\geqslant 2$. By generalized diffusion problems, we mean a linear combination of the laplacian and the biharmonic operator. Here, the biharmonic term represents the long range diffusion, whereas the laplacian represents the short range diffusion. 

This kind of problem arises in various concrete applications in physics, engineering and biology. For instance, in elasticity problems, we can cite \cite{elasticity 1}, \cite{elasticity 2} or \cite{IJPAM}. In electrostatic, we refer to \cite{CV 1}, \cite{electrostatic 1} or \cite{electrostatic 2} and in plates theory, we refer to \cite{hassan}, \cite{plaque 1} or \cite{plaque 2}. In population dynamics, we also refer to \cite{cohen-murray}, \cite{LMMT}, \cite{LLMT}, \cite{okubo} or \cite{ochoa} and references therein cited.

Let $T>0$, $k \in \mathbb{R}$ and $f\in L^p((0,T)\times\Omega)$, $p\in (1,+\infty)$. We consider, as an application model, the following generalized diffusion problem : 
\begin{equation}\label{Pedp gen}
\left\{\hspace{-0.1cm}\begin{array}{llll}
\dis \frac{\partial v}{\partial t} (t,x,y) = -\Delta^2 v(t,x,y) + k \Delta v(t,x,y) + f(t,x,y), & t \in (0,T],~x \in (a,b), ~y \in \omega, \\ \ecart
v(0,x,y) = v_0(x,y), & x \in (a,b),~y \in \omega, \\
v(t,x,\zeta) = \Delta v (t,x,\zeta) = 0, & t \in (0,T],~(x,\zeta) \in (a,b) \times \partial\omega \\
\text{Boundary Conditions (BC)} & \text{on } \{a,b\}\times \omega,
\end{array}\right.
\end{equation}
where $v$ is a density, $v_0$ is given in a suitable space and the boundary conditions (BC) denote one of the following homogeneous boundary conditions:
\begin{equation*}
\left\{\begin{array}{rclrclc}
v(t,a,y)   & = & 0, & v(t,b,y)   & = & 0, & t \in (0,T],~y \in \omega, \\
\partial^2_{x} v(t,a,y) & = & 0, & \partial^2_{x} v(t,b,y) & = & 0, & t \in (0,T],~y \in \omega,
\end{array}\right.
\end{equation*}
\begin{equation*}
\left\{\begin{array}{rclrclc}
\partial_x v(t,a,y) & = & 0, & \partial_x v(t,b,y) & = & 0, & t \in (0,T],~y \in \omega, \\
\partial^2_x v(t,a,y) + \Delta_{y} v(t,a,y) &= & 0, & \partial^2_x v(t,b,y) + \Delta_{y} v(t,b,y) &= & 0, & t \in (0,T],~y \in \omega,
\end{array}\right.
\end{equation*}
\begin{equation*}
\left\{\begin{array}{rclrclc}
v(t,a,y) & = & 0, & v(t,b,y) & = & 0, &t \in (0,T],~ y \in \omega, \\
\partial_x v(t,a,y) & = & 0, & \partial_x v(t,b,y) & = & 0, &t \in (0,T],~ y \in \omega,
\end{array}\right.
\end{equation*}
\begin{equation*}
\left\{\begin{array}{rclrclc}
\partial_x v(t,a,y) & = & 0, & \partial_x v(t,b,y) & = & 0, & t \in (0,T],~y \in \omega, \\
\partial^2_{x} v(t,a,y) & = & 0, & \partial^2_{x} v(t,b,y) & = & 0, & t \in (0,T],~ y \in \omega,
\end{array}\right.
\end{equation*}
or
\begin{equation*}
\left\{\begin{array}{rclrclc} 
v(t,a,y) & = & 0, & v(t,b,y) & = & 0, & t \in (0,T],~y \in \omega, \\
\partial^2_x v(t,a,y) + \Delta_{y} v(t,a,y) & = & 0, & \partial^2_x v(b,y) + \Delta_{y} v(t,b,y) & = & 0, & t \in (0,T],~ y \in \omega.
\end{array}\right.
\end{equation*}
We set
\begin{equation*}
\left\{\begin{array}{l}
D(A_0) := W^{2,p}(\omega) \cap W^{1,p}_0(\omega) \\ \ecart
\forall \psi \in D(A_0), \quad A_0 \psi = \Delta_y \psi.
\end{array}\right.
\end{equation*}
Now, for $i=1,2,3,4,5$, let us introduce the following linear operators which correspond to the abstract formulation of the spatial operator in \eqref{Pedp gen}:
$$\left\{\begin{array}{ccl}
D(\A_{0,i}) & = & \{u \in W^{4,p}(a,b;L^p(\omega))\cap L^p(a,b;D(A_0^2))\text{ and } u'' \in L^p(a,b;D(A_0)) : \text{(BCi)}_0\} \\ \ecart
\left[\A_{0,i} u\right](x) & = & - u^{(4)}(x) - (2A_0 - kI) u''(x) - (A^2_0 - k A_0) u(x), \quad u \in D(\A_{0,i}),~x\in (a,b).
\end{array}\right.$$
Here, (BCi)$_0$, $i=1,2,3,4,5$, represents the following boundary conditions (BCi) in the homogeneous case:
\begin{equation}\tag{BC1}\label{condbord1}
\left\{\begin{array}{rclrcl}
u(a) &= & \varphi_1, & u(b) &= &\varphi_2, \\
u''(a) &= &\varphi_3,& u''(b) &= &\varphi_4,
\end{array}\right.
\end{equation}
\begin{equation}\tag{BC2}\label{condbord2}
\left\{\begin{array}{rclrcl}
u'(a) &= & \varphi_1, & u'(b) &= &\varphi_2, \\
u''(a)+A_0 u(a) &= &\varphi_3, & u''(b)+ A_0 u(b) &= &\varphi_4,
\end{array}\right.
\end{equation}
\begin{equation}\tag{BC3}\label{condbord3}
\left\{\begin{array}{rclrcl}
u(a) &= & \varphi_1, & u(b) &= &\varphi_2, \\
u'(a) &= &\varphi_3, & u'(b) &= &\varphi_4,
\end{array}\right.
\end{equation}
\begin{equation}\tag{BC4}\label{condbord4}
\left\{\begin{array}{rclrcl}
u'(a) &= & \varphi_1, & u'(b) &= &\varphi_2, \\
u''(a) &= &\varphi_3, & u''(b) &= &\varphi_4,
\end{array}\right.
\end{equation}
or
\begin{equation}\tag{BC5}\label{condbord5}
\left\{\begin{array}{rclrcl}
u(a) &= & \varphi_1, & u(b) &= &\varphi_2, \\
u''(a) + A_0 u(a) &= &\varphi_3, & u''(b) + A_0 u(b) &= &\varphi_4,
\end{array}\right.
\end{equation}
where $\varphi_1,\varphi_2,\varphi_3,\varphi_4\in L^p(\omega)$.

In this work, we study operators $\A_i$, $i=1,2,3,4,5$, wherein we have considered a more general operator $A$, satisfying some elliptic assumptions described in \refs{sect linear parabolic case}, instead of $A_0$. Then, we study the spectral equation
$$\left(-\A_i - \lambda I \right)u = g,$$
where $g \in L^p(a,b;X)$ with $p \in (1, +\infty)$ and $X$ a complex Banach space. This leads us to solve the abstract Cauchy problem
\begin{equation}\label{P parabo}
\left\{\begin{array}{l}
\dis v'(t) - \A_i v(t) = f(t), \quad t\in (0,T] \\ \ecart
v(0) = v_0,
\end{array}\right.
\end{equation}
in two cases:
\begin{enumerate}
\item $f : [0,T] \longrightarrow L^p(a,b;X)$ with $p \in (1, +\infty)$ and $v_0$ is in suitable spaces,

\item $f : [0,T] \longrightarrow C^\theta([a,b];X)$ with $\theta \in (0,1)$, $v_0 \in D(\A_i)$ and $f$, $v_0$ satisfying some compatibility condition which will be specified in \refs{sect Applications}.

\end{enumerate}

Moreover, we will study, among others, the optimal regularity of the two following functions
\begin{equation*}
u_\psi (x) = \left( e^{(x-a) \Lambda_2} - e^{(x-a) \Lambda_1}\right) \psi \quad \text{and} \quad v_\psi (x) = \left( e^{(b-x) \Lambda_2} - e^{(b-x) \Lambda_1}\right) \psi,
\end{equation*}
where $\psi \in X$, $x \in (a,b) \subset \RR$, with $a<b$, $\Lambda_2 = \Lambda_1 + \B$; here $\Lambda_1$ is a boundedly invertible operator satisfying the Maximal Regularity property $(\M\R)$, see \refs{section reg diff semi-groupe}, $\B \in \L(X)$ and $\Lambda_1 \B = \B \Lambda_1\text{ on }D(\Lambda_1)$.

This optimal regularity, which is an interesting result in itself, will be very useful for the study of the spectral properties of $\A_i$.

This article is organized as follows. In \refs{sect Def}, we recall some classical definitions and results about sectorial operators and interpolation spaces. In \refs{sect linear steady case}, we study a class of general fourth order linear abstract problem where we show the existence and uniqueness of the classical solution for each problem. In \refs{sect linear parabolic case}, using the results of the previous section, we study the spectral properties of each $\A_i$ for $i=1,2,3,4,5$ and we prove that $-\A_i$ is sectorial. We then obtain that $\A_i$ generates a $C_0$-semigroup which becomes analytic for some $A$ as $A_0$. Finally, \refs{sect Applications} is devoted to an application focused on the study of problem \eqref{Pedp gen}. 

\section{Definitions and prerequisites} \label{sect Def}

\subsection{The class of Bounded Imaginary Powers of operators}

\begin{Def}\label{Def Banach}
A Banach space $X$ is a UMD space if and only if for all $p\in(1,+\infty)$, the Hilbert transform is bounded from $L^p(\RR,X)$ into itself (see \cite{bourgain} and \cite{burkholder}).
\end{Def}
\begin{Def}\label{Def op Sect}
Let $\alpha \in(0,\pi)$. Sect($\alpha$) denotes the space of closed linear operators $T_1$ which satisfying
$$
\begin{array}{l}
i)\quad \sigma(T_1)\subset \overline{S_{\alpha}},\\ \ecart
ii)\quad\forall~\alpha'\in (\alpha,\pi),\quad \sup\left\{\|\lambda(\lambda\,I-T_1)^{-1}\|_{\L(X)} : ~\lambda\in\CC\setminus\overline{S_{\alpha'}}\right\}<+\infty,
\end{array}
$$
where
\begin{equation}\label{defsector}
S_\alpha\;:=\;\left\{\begin{array}{lll}
\left\{ z \in \CC : z \neq 0 ~~\text{and}~~ |\arg(z)| < \alpha \right\} & \text{if} & \alpha \in (0, \pi], \\ \ecart
\,(0,+\infty) & \text{if} & \alpha = 0,
\end{array}\right.
\end{equation} 
see \cite{haase}, p. 19. Such an operator $T_1$ is called sectorial operator of angle $\alpha$.
\end{Def}
\begin{Rem}
From \cite{komatsu}, p. 342, we know that any injective sectorial operator~$T_1$ admits imaginary powers $T_1^{is}$, $s\in\RR$, but, in general, $T_1^{is}$ is not bounded.
\end{Rem} 
\begin{Def}
Let $\theta \in [0, \pi)$. We denote by BIP$(X,\theta)$, the class of sectorial injective operators $T_2$ such that
\begin{itemize}
\item[] $i) \quad ~~\overline{D(T_2)} = \overline{R(T_2)} = X,$

\item[] $ii) \quad ~\forall~ s \in \RR, \quad T_2^{is} \in \L(X),$

\item[] $iii) \quad \exists~ C \geq 1 ,~ \forall~ s \in \RR, \quad ||T_2^{is}||_{\L(X)} \leq C e^{|s|\theta}$,
\end{itemize}
see~\cite{pruss-sohr}, p. 430.
\end{Def}

\subsection{Interpolation spaces}\label{sect interp}

Here we recall some properties about real interpolation spaces in particular cases.
\begin{Def}
Let $T_3 : D(T_3) \subset X \longrightarrow X$ be a linear operator such that
\begin{equation}\label{def T3}
(0,+\infty) \subset \rho(T_3)\quad \text{and}\quad \exists~C>0:\forall~t>0, \quad \|t(T_3-tI)^{{\mbox{-}}1}\|_{\L(X)} \leqslant C.
\end{equation}
Let $m \in \NN \setminus \{0\}$, $\theta \in (0,1)$ and $q \in [1,+\infty]$. We will use the real interpolation spaces 
$$(D(T_3^m),X)_{\theta,q} = (X,D(T_3^m))_{1{\mbox{-}}\theta,q},$$ 
defined, for instance, \cite{lions-peetre} or \cite{lunardi}. 

In particular, for $m=1$, we have the following characterization
$$(D(T_3),X)_{\theta,q} := \left\{\psi \in X:t\longmapsto t^{1-\theta} \|T_3(T_3-tI)^{-1}\psi\|_X \in L_*^q(0,+\infty)\right\},$$
where $L_*^q(0,+\infty)$ is given by
$$L_*^q(0,+\infty;\CC) := \left\{f \in L^q(0,+\infty) : \left(\int_0^{+\infty} |f(t)|^q\frac{dt}{t}\right)^{1/q} < + \infty \right\}, \quad \text{for } q \in [1,+\infty),$$
and for $q = +\infty$, by
$$L_*^\infty(0,+\infty;\CC) := \left\{f \text{ measurable on } (0,+\infty) : \underset{t \in (0,+\infty)}{\text{ess sup}} |f(t)| < + \infty\right\},$$
see \cite{da prato-grisvard} p. 325, or \cite{grisvard}, p. 665, Teorema 3, or section 1.14 of \cite{triebel}, where this space is denoted by $(X, D(T_3))_{1{\mbox{-}}\theta,q}$. Note that we can also characterize the space $(D(T_3),X)_{\theta,q}$ taking into account the Osservazione, p. 666, in \cite{grisvard}.

We set also, for any $m \in \NN\setminus\{0\}$
\begin{equation*}
(D(T_3),X)_{m+\theta,q}\;:=\;\left\{\psi\in D(T_3^m) : T_3^m\psi\in (D(T_3),X)_{\theta,q}\right\},
\end{equation*}
and
\begin{equation*}
(X,D(T_3))_{m+\theta,q}\;:=\;\left\{\psi\in D(T_3^m) : T_3^m\psi\in (X,D(T_3))_{\theta,q}\right\},
\end{equation*}
see \cite{lunardi 2}, definition 3.2, p. 64.
\end{Def}

\begin{Rem}\label{Rem Réitération}
Note that for $T_3$ satisfying \eqref{def T3}, $T_3^m$ is closed for any $m \in \NN \setminus \{0\}$ since $\rho(T_3) \neq \emptyset$; consequently, if $m \theta < 1$, we have
$$(D(T_3^m),X)_{\theta,q} = (X,D(T_3^m))_{1-\theta,q} = (X,D(T_3))_{m-m\theta,q} = (D(T_3),X)_{(m-1) + m\theta,q} \subset D(T_3^{m-1}).$$
For more details see \cite{lunardi}, (2.1.13), p. 43; \cite{lunardi 2} Proposition 3.8, p. 69 or \cite{grisvard}, p. 676, Teorema 6.
\end{Rem}

\subsection{Prerequisites}\label{Prerequis}

In this section, we recall some well-known facts, useful in our proofs.

\begin{Lem}[\cite{grisvard}]\label{Lem trace}
Let $T_3$ be a linear operator satisfying \eqref{def T3}. Let $u$ such that
$$u \in W^{n,p}(a,b;X) \cap L^p(a,b;D(T_3^m)),$$ 
where $a,b \in \RR$ with $a<b$, $n,m \in \NN\setminus\{0\}$ and $p \in (1,+\infty)$. Then for any $j \in \NN$ satisfying the Poulsen condition $0<\frac{1}{p}+j < n$ and $s \in \{a,b\}$, we have
$$
u^{(j)}(s) \in (D(T_3^m),X)_{\frac{j}{n}+\frac{1}{np},p}.
$$
\end{Lem}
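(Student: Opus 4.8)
The plan is to read this as the abstract trace theorem for the Banach couple $(X_0,X_1):=(D(T_3^m),X)$. Since $\rho(T_3)\neq\emptyset$, the operator $T_3^m$ is closed (as recalled in \refR{Rem Réitération}), so $X_0$ equipped with its graph norm is a Banach space with $X_0\hookrightarrow X_1$, and the spaces $(X_0,X_1)_{\theta,p}$ are exactly those appearing in the statement. The claim is local at each endpoint, so I would treat $s=a$ (the case $s=b$ being symmetric): multiplying $u$ by a smooth cut-off that equals $1$ near $a$ and vanishes near $b$, then translating $a$ to $0$ and extending by $0$, keeps $u$ in $W^{n,p}(X_1)\cap L^p(X_0)$ and reduces the problem to a function on $(0,+\infty)$ whose trace at $0$ is sought. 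The Poulsen condition $0<\tfrac1p+j<n$ is precisely $\theta_j:=\tfrac{j}{n}+\tfrac1{np}\in(0,1)$; it forces $j\leqslant n-1$, so $u^{(j)}$ already possesses a continuous $X_1$-valued representative and the entire content is that $u^{(j)}(a)$ lands in the smaller space $(X_0,X_1)_{\theta_j,p}$.

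The engine is the prototypical Lions trace theorem: if $w\in L^p(0,+\infty;Y_0)$ and $w'\in L^p(0,+\infty;Y_1)$ for a couple $Y_0\hookrightarrow Y_1$, then $w(0)\in(Y_0,Y_1)_{1/p,p}$. I would not apply it to $(X_0,X_1)$ directly, but along the intermediate scale
\[
Z_0:=X_0,\qquad Z_\ell:=(X_0,X_1)_{\ell/n,\,p}\ (1\leqslant\ell\leqslant n-1),\qquad Z_n:=X_1,
\]
which satisfies $Z_0\hookrightarrow Z_1\hookrightarrow\cdots\hookrightarrow Z_n$ by monotonicity of the interpolation scale. Applying the trace theorem to $w=u^{(j)}$, whose derivative is $w'=u^{(j+1)}$, for the couple $(Z_j,Z_{j+1})$ would give $u^{(j)}(a)\in(Z_j,Z_{j+1})_{1/p,p}$, \emph{provided} $u^{(j)}\in L^p(Z_j)$ and $u^{(j+1)}\in L^p(Z_{j+1})$.

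These two memberships are exactly the vector-valued intermediate derivative theorem: from $u\in L^p(X_0)$ and $u^{(n)}\in L^p(X_1)$ one deduces $u^{(\ell)}\in L^p(Z_\ell)$ for every $0\leqslant\ell\leqslant n$ (the extreme indices $\ell=0,n$ being the hypotheses themselves). This is the step I expect to be the main obstacle, and I would establish it by the standard real-interpolation machinery: bound the $K$-functional $K\!\left(\sigma,u^{(\ell)}(t);X_0,X_1\right)$ by splitting $u^{(\ell)}(t)$ at scale $\sigma$ through a local averaging/Taylor representation built from the low-order datum $u$ and the top-order datum $u^{(n)}$, then integrate in $t$ and absorb the resulting weights by Hardy's inequality; this is the vector-valued analogue of the Gagliardo--Nirenberg inequality for intermediate derivatives. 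Granting it, it only remains to identify the target space. By the reiteration property (\refR{Rem Réitération}), in its stability form allowing the genuine endpoints $Z_0=X_0$ and $Z_n=X_1$,
\[
(Z_j,Z_{j+1})_{1/p,\,p}=(X_0,X_1)_{\eta,\,p},\qquad \eta=\Big(1-\tfrac1p\Big)\tfrac{j}{n}+\tfrac1p\,\tfrac{j+1}{n}=\tfrac{j}{n}+\tfrac1{np}=\theta_j,
\]
which is exactly $(D(T_3^m),X)_{\frac{j}{n}+\frac1{np},p}$, as claimed. The only remaining care is to check that $j/n$ and $(j+1)/n$ stay in $[0,1]$ with $\eta\in(0,1)$, the regime where reiteration applies, which is once more guaranteed by the Poulsen condition.
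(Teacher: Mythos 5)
You should first note that the paper does not prove this lemma at all: it is quoted from \cite{grisvard}, Teorema 2', p.~678, so your proposal can only be compared with the classical proof behind that citation. Measured against it, your argument is correct and is essentially that classical argument. The localization at $s=a$ (cut-off equal to $1$ near $a$, extension by zero, translation) is legitimate; the Poulsen condition is exactly the statement $\theta_j=\frac{j}{n}+\frac{1}{np}\in(0,1)$; the first-order trace theorem you invoke is the standard one ($w\in L^p(0,+\infty;Y_0)$ and $w'\in L^p(0,+\infty;Y_1)$ imply $w(0)\in (Y_0,Y_1)_{1/p,p}$); the monotone scale $Z_0\hookrightarrow Z_1\hookrightarrow\cdots\hookrightarrow Z_n$ makes sense because $D(T_3^m)\hookrightarrow X$ holds for the graph norm (no invertibility of $T_3$ is needed, only closedness of $T_3^m$); and the reiteration computation $(1-\frac1p)\frac{j}{n}+\frac1p\,\frac{j+1}{n}=\frac{j}{n}+\frac{1}{np}$ is right, including at the endpoint spaces $Z_0=D(T_3^m)$ and $Z_n=X$, which are of class $0$ and $1$ for the couple, so the endpoint form of reiteration you appeal to does apply. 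The only step that carries real analytic weight is the one you flag yourself: the vector-valued intermediate derivative theorem, namely that $u\in L^p(0,+\infty;X_0)$ and $u^{(n)}\in L^p(0,+\infty;X_1)$ imply $u^{(\ell)}\in L^p\left(0,+\infty;(X_0,X_1)_{\ell/n,p}\right)$ for $0\leqslant \ell\leqslant n$. Your $K$-functional/Hardy sketch is the standard proof strategy for it, but as written it is a sketch, not a proof; the clean way to close the argument is to observe that this is precisely the th\'eor\`eme des d\'eriv\'ees interm\'ediaires of Lions--Peetre \cite{lions-peetre} (see also \cite{triebel}, Section 1.8, where both this theorem and the higher-order trace theorem you are re-proving appear), so it can be invoked by citation exactly as the paper invokes Grisvard. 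In short: correct structure, correct bookkeeping, with the central lemma delegated---appropriately, since it is classical---to the literature.
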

This result is proved in \cite{grisvard}, Teorema 2', p. 678.

\begin{Lem}\label{Lem triebel}
Let $\psi \in X$ and $T_3$ be a generator of a bounded analytic semigroup in $X$ with $0 \in \rho(T_3)$. Then, for any $m \in \NN\setminus\{0\}$ and $p\in [1,+\infty]$, the next properties are equivalent:
\begin{enumerate}

\item $x\mapsto T_3^m e^{(x-a)T_3} \psi \in L^p(a,+\infty;X)$

\item $\psi \in \left(D(T_3),X\right)_{m-1+\frac{1}{p},p}$

\item $x\mapsto e^{(x-a)T_3} \psi \in W^{m,p}(a,b;X)$

\item $x\mapsto T_3^m e^{(x-a)T_3} \psi \in L^p(a,b;X)$.
\end{enumerate}
\end{Lem}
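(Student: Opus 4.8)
The plan is to prove that all four statements are equivalent through the chain $(2)\Leftrightarrow(1)\Leftrightarrow(4)\Leftrightarrow(3)$, doing the two outer equivalences by elementary analytic-semigroup estimates and reserving the real work for $(1)\Leftrightarrow(2)$. Throughout I would use that, since $T_3$ generates a bounded analytic semigroup with $0\in\rho(T_3)$, there are $\epsilon>0$ and constants $C_k$ with
$$\|T_3^k e^{tT_3}\|_{\L(X)} \leq C_k\, t^{-k}\, e^{-\epsilon t}, \qquad t>0,\ k\in\NN,$$
which combines the analytic smoothing with uniform exponential decay. For $(1)\Leftrightarrow(4)$, the implication $(1)\Rightarrow(4)$ is trivial as $(a,b)\subset(a,+\infty)$. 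For $(4)\Rightarrow(1)$ I would split $(a,+\infty)=(a,b)\cup(b,+\infty)$ and control the tail by the semigroup law and smoothing: for $x\geq b$, $T_3^m e^{(x-a)T_3}\psi = e^{(x-b)T_3}\,T_3^m e^{(b-a)T_3}\psi$, so $\|T_3^m e^{(x-a)T_3}\psi\|\leq C\,e^{-\epsilon(x-b)}$ with $C=C_0\|T_3^m e^{(b-a)T_3}\psi\|<+\infty$; the contribution of $(b,+\infty)$ is therefore finite for every $\psi\in X$, and (1) reduces to (4).

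Next, $(3)\Leftrightarrow(4)$. For an analytic semigroup $x\mapsto e^{(x-a)T_3}\psi$ is smooth on $(a,b)$ with strong derivatives $\frac{d^k}{dx^k}e^{(x-a)T_3}\psi = T_3^k e^{(x-a)T_3}\psi$, so (3) means $T_3^k e^{(\cdot-a)T_3}\psi\in L^p(a,b;X)$ for $k=0,\dots,m$; the case $k=m$ is exactly (4), giving $(3)\Rightarrow(4)$. Conversely, since $0\in\rho(T_3)$ the operators $T_3^{-(m-k)}$ are bounded, and $T_3^k e^{(x-a)T_3}\psi = T_3^{-(m-k)}\big(T_3^m e^{(x-a)T_3}\psi\big)$ dominates every derivative of order $1\le k\le m$ by the top one in $L^p(a,b;X)$, while the $k=0$ term is bounded by $C_0\|\psi\|$; hence $(4)\Rightarrow(3)$.

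The core is $(1)\Leftrightarrow(2)$. For $p\in(1,+\infty)$ I would unfold the higher-order space through the tower definition of the excerpt with $\theta'=1/p\in(0,1)$,
$$\big(D(T_3),X\big)_{m-1+\frac1p,\,p} = \Big\{\psi\in D(T_3^{m-1}) : T_3^{m-1}\psi\in \big(D(T_3),X\big)_{\frac1p,p}\Big\},$$
and $(D(T_3),X)_{\frac1p,p}=(X,D(T_3))_{1-\frac1p,p}$. Then I apply the semigroup characterization of order-one interpolation spaces valid for generators of bounded analytic semigroups, namely $\phi\in(X,D(T_3))_{\beta,p}$ iff $t\mapsto t^{1-\beta}\|T_3 e^{tT_3}\phi\|\in L_*^p(0,+\infty)$; with $\beta=1-\frac1p$ this becomes $\int_0^{+\infty}\|T_3 e^{tT_3}\phi\|^p\,dt<+\infty$. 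Taking $\phi=T_3^{m-1}\psi$ and setting $t=x-a$ identifies this integral with $\int_a^{+\infty}\|T_3^m e^{(x-a)T_3}\psi\|^p\,dx$, i.e. (1), so $(2)\Rightarrow(1)$ and the reverse integral identity holds as well. The only extra point for $(1)\Rightarrow(2)$ is the membership $\psi\in D(T_3^{m-1})$, which I would recover by closedness: by (1) the map $s\mapsto T_3^m e^{sT_3}\psi$ lies in $L^1(0,1;X)$, so $g(t):=T_3^{m-1}e^{tT_3}\psi=g(1)-\int_t^1 T_3^m e^{sT_3}\psi\,ds$ converges in $X$ as $t\to0^+$; since $e^{tT_3}\psi\to\psi$ and $T_3^{m-1}$ is closed, the limit equals $T_3^{m-1}\psi$, proving $\psi\in D(T_3^{m-1})$.

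I expect the main obstacle to be this last equivalence, specifically the bookkeeping between the two interpolation conventions and the endpoint exponents. For $p\in\{1,+\infty\}$ one has $1/p\in\{0,1\}$, so the order-one space sits at a boundary value and the plain $L_*^p$ characterization above must be replaced by its endpoint versions (for $p=\infty$, the condition $\sup_{t>0}\|T_3^m e^{tT_3}\psi\|<+\infty$); I would treat these two values separately, invoking the corresponding boundary characterizations of $(X,D(T_3))_{\beta,p}$ from Lunardi and Triebel, the tail-decay and Sobolev identifications being insensitive to the value of $p$.
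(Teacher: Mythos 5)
Your proposal is correct for $p\in(1,+\infty)$, and it takes a genuinely different route from the paper in the sense that the paper does not prove this lemma at all: its ``proof'' consists of citing Triebel for $(1)\Leftrightarrow(2)$ and Lemma 3.2 of Thorel's paper for the remaining equivalences. Your chain $(2)\Leftrightarrow(1)\Leftrightarrow(4)\Leftrightarrow(3)$ reconstructs, in a self-contained way, exactly the ingredients those references rely on: the estimate $\|T_3^k e^{tT_3}\|_{\L(X)}\leqslant C_k t^{-k}e^{-\epsilon t}$ (valid because $0\in\rho(T_3)$ pushes $\sigma(T_3)$ strictly to the left of the imaginary axis), the boundedness of $T_3^{-(m-k)}$ to dominate the lower-order derivatives by the top one on the finite interval, the semigroup characterization $\phi\in(X,D(T_3))_{1-\frac{1}{p},p}\Longleftrightarrow\int_0^{+\infty}\|T_3e^{tT_3}\phi\|^p\,dt<+\infty$, and the tower definition of $(D(T_3),X)_{m-1+\frac{1}{p},p}$ from the paper's Section 2.2. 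The one step that a citation hides and that you supply explicitly, and correctly, is the recovery of $\psi\in D(T_3^{m-1})$ from $(1)$ via closedness, writing $T_3^{m-1}e^{tT_3}\psi=g(1)-\int_t^1T_3^m e^{sT_3}\psi\,ds$ and letting $t\to0^+$; this is needed before the order-one characterization can be applied to $T_3^{m-1}\psi$. What the paper's route buys is brevity; what yours buys is independence from Thorel's Lemma 3.2 and transparency about where each hypothesis (analyticity, $0\in\rho(T_3)$, strong continuity at $0$) is used.

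The one soft spot is the pair of endpoint cases $p\in\{1,+\infty\}$, which you flag but defer. To be fair, this is as much an ambiguity of the statement as of your proof: for these values $m-1+\frac{1}{p}$ is an integer, so the space in $(2)$ is not covered by the paper's own interpolation conventions, which require a fractional part $\theta\in(0,1)$. For $p=+\infty$ the Favard-class characterization $\sup_{t>0}\|T_3^m e^{tT_3}\psi\|_X<+\infty$ of $(X,D(T_3^m))_{1,\infty}$ does what you need, and for $p=1$, $m\geqslant 2$ the identification with $(X,D(T_3^m))_{1-\frac{1}{m},1}$ together with Triebel's characterization works; but for $(m,p)=(1,1)$ the $K$-method space $(X,D(T_3))_{0,1}$ degenerates to $\{0\}$ while condition $(1)$ is satisfied by every $\psi\in D(T_3)$, so there the equivalence holds only if the interpolation space is \emph{defined} through the semigroup (or resolvent) characterization. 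A complete write-up should state explicitly which convention is adopted at these two endpoints.
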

The equivalence between 1 and 2 is proved in \cite{triebel}. The others are proved in \cite{thorel}, Lemma 3.2, p. 638-639. 

\begin{Lem}[\cite{LMMT}]\label{Lem reg}
Let $V \in \L(X)$ such that $\dis 0 \in \rho(I+V)$. Then 
$$(I+V)^{{\mbox{-}}1} = I - V(I+V)^{-1},$$ 
and $V(I+V)^{-1}(X) \subset V(X)$. Moreover, if $T$ is a linear operator in $X$ such that $V(X) \subset D(T)$ and for $\psi \in D(T)$, \mbox{$TV \psi=VT \psi$}, then 
$$
\forall\,\psi \in D(T), \quad \dis  V(I+V)^{-1}T \psi = T V(I+V)^{-1} \psi.
$$
\end{Lem}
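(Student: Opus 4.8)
The plan is to treat the three assertions in turn, the first two being immediate and the third carrying the real content. Throughout I write $R:=(I+V)^{-1}$, which belongs to $\L(X)$ precisely because $0\in\rho(I+V)$.

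For the algebraic identity I would simply expand $I=(I+V)R=R+VR$, and rearrange to obtain $R=I-VR$, i.e. $(I+V)^{-1}=I-V(I+V)^{-1}$. For the inclusion $V(I+V)^{-1}(X)\subset V(X)$ I would observe that any element of the left-hand side has the form $V\big(Rx\big)$ with $Rx\in X$, hence lies in $V(X)$; no further argument is needed. I also record the elementary fact that $V$ and $R$ commute as bounded operators, since $V$ commutes with $I+V$ and therefore with its inverse.

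The substantive step is the commutation identity. The naive route, pushing $T$ through $R$, fails because there is no reason for $R$ to map $D(T)$ into itself, so an expression such as $TR\psi$ need not be defined. My plan is to bypass this by showing that the two sides have the same image under the injective operator $I+V$. First both sides are well defined: $T\psi\in X$, so $VRT\psi$ is just a bounded operator applied to a vector, while $VR\psi\in V(X)\subset D(T)$, so $TVR\psi$ is meaningful. Applying $I+V$ to the left side and using $(I+V)VR=V\big((I+V)R\big)=V$, I get $(I+V)VRT\psi=VT\psi$.

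For the right side I would expand $(I+V)TVR\psi=TVR\psi+VTVR\psi$, then invoke the hypothesis $TV\phi=VT\phi$ with $\phi=VR\psi\in D(T)$ to replace $VTVR\psi$ by $TV^2R\psi$; since $VR\psi$ and $V^2R\psi$ both lie in $V(X)\subset D(T)$, linearity of $T$ gives $TVR\psi+TV^2R\psi=T\big(V(I+V)R\psi\big)=TV\psi=VT\psi$, the final equality being the commutation hypothesis applied to $\psi\in D(T)$. Thus $I+V$ sends both candidate expressions to $VT\psi$, and injectivity of $I+V$ forces $VRT\psi=TVR\psi$, which is the claim. The only delicate point, and the one I would be most careful to track, is the bookkeeping of membership in $D(T)$: every use of the commutation relation and of $T$ must be justified by the inclusion $V(X)\subset D(T)$, never by any domain-invariance of $R$, which we do not have.
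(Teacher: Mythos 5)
Your proposal is correct, and there is nothing in the paper to compare it against: the paper does not prove this lemma at all, it simply cites \cite{LMMT}, Lemma 5.1, p.~365. Your argument is therefore a complete, self-contained substitute for that reference. The first two assertions are handled exactly as one would expect (expand $I=(I+V)R$, and note $VRx=V(Rx)\in V(X)$), and your treatment of the commutation identity is sound: the genuine obstacle is indeed that $R=(I+V)^{-1}$ has no reason to map $D(T)$ into itself, so $TR\psi$ may be undefined, and your device of applying the injective operator $I+V$ to both candidate vectors sidesteps this cleanly. Each application of $T$ in your computation is justified by membership in $V(X)\subset D(T)$ (namely $VR\psi$, $V^2R\psi$ and $V\psi$), and each use of the commutation hypothesis is applied to a vector already known to lie in $D(T)$ ($VR\psi$ and $\psi$ respectively), so the bookkeeping you flagged as the delicate point is in fact complete. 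The identity $(I+V)VR=V$ and the final appeal to injectivity of $I+V$ close the argument; this is the natural proof of the lemma and, in substance, the same mechanism one finds in the cited source.
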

This result is proved in \cite{LMMT}, Lemma 5.1, p. 365.

\section{General fourth order abstract problem}\label{sect linear steady case}

In this section, we study the following more general linear abstract equation of fourth order:
\begin{equation}\label{eq de base}
u^{(4)}(x) + (P+Q) u''(x) + PQu(x) = f(x), \quad x \in (a,b).
\end{equation}
Here, $P$ and $Q$ are two linear operators on a Banach space $X$ verifying some assumptions (see below) and $f\in L^p(a,b;X)$, with $p\in (1,+\infty)$.

We search a classical solution $u$ of \eqref{eq de base}, which is a solution $u$ of \eqref{eq de base} such that 
\begin{equation*}
u \in W^{4,p}(a,b;X) \cap L^p(a,b;D(PQ)) \quad \text{and} \quad u'' \in L^p(a,b;D(P) \cap D(Q)).
\end{equation*}
Moreover, we consider, in this section, the boundary conditions (BCi), $i=1,2,3,4,5$, where $A_0$ is replaced by $P$. We say that $u$ is a classical solution of \eqref{eq de base}-(BCi), $i=1,2,3,4,5$, if $u$ is a classical solution of \eqref{eq de base} satisfying (BCi). 

This study will allow us, in particular, to deduce all the spectral properties for operators $\A_i$, for each $i=1,2,3,4,5$. 

\subsection{Assumptions, main results and some remarks}\label{sect hyp gen}
 
\subsubsection{Assumptions} 

We assume the following hypotheses:

\begin{center}
\begin{tabular}{l}
$(H_1)\quad$ $X$ is a UMD space, \\ \ecart
$(H_2)\quad$ $P$ and $Q$ are closed and $0 \in \rho(P)\cap \rho(Q)$, \\ \ecart
$(H_3)\quad$ $D(P)=D(Q)~$ and $~P^{-1}Q^{-1} = Q^{-1}P^{-1},$ \\ \ecart
$(H_4)\quad$ $-P,-Q \in$ BIP$(X,\theta_0),~$ for $\theta_0 \in [0, \pi)$, \\ \ecart
$(H_5)\quad$ $P-Q$ admits an invertible extension $B \in \L(X)$.
\end{tabular}
\end{center}
Some of our results will need a supplementary hypothesis:
$$(H_6)\quad 0\in \rho(U)\cap \rho(V),$$
where 
\begin{equation} \label{U V gen}
\left\{\begin{array}{lllll}
U &:=& I-e^{c \left( L+M\right) }-B^{-1}\left( L+M\right)^2 \left( e^{c M}-e^{c L}\right) &=& I-T^- \in \L(X) \\ \ecart
V &:=& I-e^{c \left( L+M\right) }+B^{-1}\left( L+M\right)^2 \left( e^{c M}-e^{c L}\right) &=& I-T^+ \in \L(X),
\end{array}\right.
\end{equation}
with $c:=b-a >0$, $L:=-\sqrt{-Q}$ and $M:=-\sqrt{-P}$.

Note that, from $(H_4)$, $-P$ and $-Q$ are sectorial operators; this allows us to define 
\begin{equation}\label{LM}
L=-\sqrt{-Q} \quad \text{and} \quad M=-\sqrt{-P},
\end{equation}
which generate bounded analytic semigroups on $X$ and due to $(H_3)$, $L+M$ also generates a bounded analytic semigroup on $X$, see for instance \cite{pruss-sohr} Theorem 5, p. 443.

\subsubsection{Main results} 

\begin{Th}\label{Th Carre}
Let $f\in L^p(a,b;X)$, with $p \in (1, +\infty)$. Assume that $(H_1)$, $(H_2)$, $(H_3)$, $(H_4)$ and $(H_5)$ hold. Then,
\begin{enumerate}
\item there exists a unique classical solution $u_1$ of problem \eqref{eq de base}-\eqref{condbord1} if and only if 
\begin{equation}\label{reg carre}
\varphi_1, \varphi_2 \in (D(P), X)_{1+\frac{1}{2p},p} \quad \text{and} \quad \varphi_3, \varphi_4 \in (D(P), X)_{\frac{1}{2p},p}.
\end{equation}

This solution, denoted by $F_{\Phi ,f}$ with $\Phi = (\varphi_1, \varphi_2, \varphi_3, \varphi_4)$, is given, for all $x \in [a,b]$, by
\begin{equation}\label{F Th carre}
\begin{array}{rll}
F_{\Phi,f}(x) & = & \dis \left(e^{(x-a)M} - e^{(b-x)M}  e^{cM}\right) Z \varphi_1 + \left(e^{(b-x)M} - e^{(x-a)M} e^{cM}\right) Z \varphi_2 \\ \ecart
&& \dis + \frac{1}{2} \left(e^{(b-x)M}  e^{cM} - e^{(x-a)M}\right) Z M^{-1} \int_a^b e^{(s-a)M} v_0(s)~ ds \\ \ecart
&& \dis + \frac{1}{2} \left(e^{(x-a)M} e^{cM} - e^{(b-x)M} \right) Z M^{-1} \int_a^b e^{(b-s)M} v_0(s)~ ds \\ \ecart
&& \dis + \frac{1}{2} M^{-1} \dis\int_a^x e^{(x-s)M} v_0(s)~ ds + \frac{1}{2} M^{-1} \int_x^b e^{(s-x)M} v_0(s)~ ds,
\end{array}
\end{equation}
where
\begin{equation}\label{v Th carre}
\begin{array}{rll}
v_0(x) & := & \dis \left(e^{(x-a)L} - e^{(b-x)L}  e^{cL}\right) W \left(\varphi_3 + P \varphi_1 \right) \\ \ecart
&&\dis + \left(e^{(b-x)L} - e^{(x-a)L} e^{cL} \right) W \left( \varphi_4 + P \varphi_2 \right) \\ \ecart
&& \dis + \frac{1}{2} \left(e^{(b-x)L} e^{cL} - e^{(x-a)L}\right)W L^{-1} \int_a^b e^{(s-a)L} f(s)~ ds \\ \ecart 
&& \dis + \frac{1}{2} \left( e^{(x-a)L} e^{cL} - e^{(b-x)L} \right) W L^{-1} \int_a^b e^{(b-s)L} f(s)~ ds \\ \ecart
&&\dis + \frac{1}{2} L^{-1} \dis\int_a^x e^{(x-s)L} f(s)~ ds + \frac{1}{2} L^{-1} \int_x^b e^{(s-x)L} f(s)~ ds,
\end{array}
\end{equation}
with $Z := \left(I-e^{2cM} \right)^{-1}$ and $W := \left(I-e^{2cL}\right)^{-1}$.\\
Note that the existence of $Z$ and $W$ is ensured by \cite{lunardi}, Proposition 2.3.6, p. 60.

\item there exists a unique classical solution $u_5$ of problem $\eqref{eq de base}$-$\eqref{condbord5}$ if and only if
\begin{equation}\label{reg phi CB5 carre}
\varphi _{1},\varphi _{2}\in \left( D\left( P\right),X\right) _{1+\frac{1}{2p},p} \quad \text{and} \quad \varphi _{3},\varphi _{4}\in \left( D\left( P\right),X\right) _{\frac{1}{2p},p}.  
\end{equation}
In this case, the unique solution is $u_5 = F_{(\varphi _{1},\varphi_{2},\varphi _{3}-P\varphi _{1},\varphi _{4}-P\varphi _{2}),f}.$
\end{enumerate}
\end{Th}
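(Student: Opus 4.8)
The plan is to exploit the formal factorization of the fourth-order operator. Since $(H_3)$ gives $D(P)=D(Q)$ with commuting resolvents, $P$ and $Q$ commute on $D(P^2)=D(Q^2)$, and the left-hand side of \eqref{eq de base} factors as
$$u^{(4)} + (P+Q)u'' + PQ\,u = \left(\frac{d^2}{dx^2} + Q\right)\left(\frac{d^2}{dx^2} + P\right)u.$$
Setting $v_0 := u'' + Pu$, problem \eqref{eq de base} then splits into the two second-order problems $v_0'' + Q\,v_0 = f$ and $u'' + Pu = v_0$. The role of the boundary conditions \eqref{condbord1} is to decouple these into two independent Dirichlet problems: since $v_0(a)=u''(a)+Pu(a)=\varphi_3+P\varphi_1$ and $v_0(b)=\varphi_4+P\varphi_2$, the data $u(a)=\varphi_1$, $u(b)=\varphi_2$, $u''(a)=\varphi_3$, $u''(b)=\varphi_4$ become Dirichlet data $v_0(a)=\varphi_3+P\varphi_1$, $v_0(b)=\varphi_4+P\varphi_2$ for the first problem and $u(a)=\varphi_1$, $u(b)=\varphi_2$ for the second.

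Next I would solve each Dirichlet problem by variation of parameters built on the bounded analytic semigroups generated by $L=-\sqrt{-Q}$ and $M=-\sqrt{-P}$, available by $(H_4)$ and \eqref{LM}. Since $L^2=-Q$, a particular solution of $w''+Qw=g$ is $w_p(x)=\tfrac12 L^{-1}\int_a^x e^{(x-s)L}g(s)\,ds+\tfrac12 L^{-1}\int_x^b e^{(s-x)L}g(s)\,ds$, which one checks directly satisfies $w_p''-L^2 w_p=g$; the general classical solution is $w_p(x)+e^{(x-a)L}\xi_1+e^{(b-x)L}\xi_2$. Imposing the Dirichlet data $\alpha,\beta$ yields the system $\xi_1+e^{cL}\xi_2=\alpha-w_p(a)$, $e^{cL}\xi_1+\xi_2=\beta-w_p(b)$, solved through $W=(I-e^{2cL})^{-1}$; the invertibility of $W$ (and of $Z$) is the cited Lunardi Proposition 2.3.6, as $\sigma(e^{2cL})$ sits near $0$. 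Collecting the $\alpha$-, $\beta$- and $g$-terms reproduces formula \eqref{v Th carre} for $v_0$ (with $g=f$, $\alpha=\varphi_3+P\varphi_1$, $\beta=\varphi_4+P\varphi_2$), and solving $u''+Pu=v_0$ the same way with the $M$-semigroup and data $\varphi_1,\varphi_2$ reproduces \eqref{F Th carre}.

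For the equivalence with \eqref{reg carre} I would treat necessity and sufficiency separately. Necessity follows from \refL{Lem trace}: a classical solution has $u\in W^{4,p}(a,b;X)\cap L^p(a,b;D(PQ))$, so the lemma with $T_3=P$, $n=4$, $m=2$, $j=0$, together with the reiteration of \refR{Rem Réitération}, forces $\varphi_1=u(a),\varphi_2=u(b)\in(D(P),X)_{1+\frac{1}{2p},p}$; likewise $u''\in W^{2,p}\cap L^p(D(P))$ gives $\varphi_3=u''(a),\varphi_4=u''(b)\in(D(P),X)_{\frac{1}{2p},p}$. For sufficiency, I would show that under \eqref{reg carre} each term of \eqref{v Th carre} and \eqref{F Th carre} has the required regularity via the equivalences of \refL{Lem triebel}: membership in $(D(P),X)_{\frac{1}{2p},p}$ (or its $L$-analogue) handles the pure semigroup terms, while $\varphi_1\in(D(P),X)_{1+\frac{1}{2p},p}$ is exactly what lets $P$, hence $PQ$, act on the corresponding terms of $u$ while staying in $L^p$. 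Uniqueness reduces, through the factorization, to that of each Dirichlet problem: if $f=0$ and all $\varphi_i=0$ then $v_0\equiv0$ and then $u\equiv0$, since the system forces $\xi_1=\xi_2=0$.

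The main obstacle is the sufficiency half of the regularity equivalence: one must verify that the \emph{composed} expression \eqref{F Th carre}, in which $v_0$ is itself a semigroup integral, genuinely lands in $W^{4,p}(a,b;X)\cap L^p(a,b;D(PQ))$ with $u''\in L^p(a,b;D(P)\cap D(Q))$. This requires commuting $P,Q,L,M$ through the integrals, legitimate by $(H_3)$ and $(H_5)$ via \refL{Lem reg}, then applying \refL{Lem triebel} term by term, the exponent $1+\frac{1}{2p}$ being precisely what absorbs the two extra derivatives from the outer operator; the convolution terms additionally invoke the maximal-regularity estimate carried by the UMD hypothesis $(H_1)$ and the BIP property $(H_4)$. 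Finally, part 2 is immediate from part 1: the substitution $(\varphi_1,\varphi_2,\varphi_3,\varphi_4)\mapsto(\varphi_1,\varphi_2,\varphi_3-P\varphi_1,\varphi_4-P\varphi_2)$ turns the condition $u''(a)+Pu(a)=\varphi_3$ of \eqref{condbord5} into $u''(a)=\varphi_3-P\varphi_1$ of \eqref{condbord1}, and since $\varphi_1\in(D(P),X)_{1+\frac{1}{2p},p}$ is equivalent to $P\varphi_1\in(D(P),X)_{\frac{1}{2p},p}$, the requirement \eqref{reg phi CB5 carre} is unchanged; hence $u_5=F_{(\varphi_1,\varphi_2,\varphi_3-P\varphi_1,\varphi_4-P\varphi_2),f}$.
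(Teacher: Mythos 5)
Your proposal is correct and follows essentially the same route as the paper: the paper omits the proof of part 1 by citing Theorem 2.2 of \cite{LMMT}, whose argument is exactly your factorization $\left(\frac{d^2}{dx^2}+Q\right)\left(\frac{d^2}{dx^2}+P\right)$ with $v_0=u''+Pu$, decoupling into two second-order Dirichlet problems solved via the semigroups generated by $L$ and $M$, with necessity from the Grisvard trace lemma and sufficiency from the Triebel-type equivalences and Dore--Venni maximal regularity. Your treatment of part 2 (the substitution $\varphi_3\mapsto\varphi_3-P\varphi_1$, $\varphi_4\mapsto\varphi_4-P\varphi_2$, using that $\varphi_1\in(D(P),X)_{1+\frac{1}{2p},p}$ means exactly $\varphi_1\in D(P)$ with $P\varphi_1\in(D(P),X)_{\frac{1}{2p},p}$) is precisely the paper's two-line reduction to part 1.
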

\begin{Th}\label{thmprinc gen}
Let $f\in L^{p}(a,b;X)$ with $p\in (1,+\infty)$. Assume that $(H_1)$, $(H_2)$, $(H_3)$, $(H_4)$ and $(H_5)$ hold. Then

\begin{enumerate}
\item there exists a unique classical solution $u_2$ of \eqref{eq de base}-\eqref{condbord2} if and only if 
\begin{equation} \label{Reg Data 2}
\varphi_1, \varphi_2 \in \left(D(P),X\right)_{1 + \frac{1}{2} + \frac{1}{2p},p} \quad \text{and} \quad \varphi_3, \varphi_4 \in \left(D(P),X\right)_{\frac{1}{2p},p}. 
\end{equation}
\end{enumerate}
If moreover, $(H_6)$ holds, then
\begin{enumerate}

\item[2.] there exists a unique classical solution $u_3$ of \eqref{eq de base}-\eqref{condbord3} if and only if
\begin{equation}\label{Reg Data 3}
\varphi_1, \varphi_2 \in \left(D(P),X\right)_{1 + \frac{1}{2p},p} \quad \text{and} \quad \varphi_3, \varphi_4 \in \left(D(P),X\right)_{1 + \frac{1}{2} + \frac{1}{2p},p},
\end{equation}

\item[3.] there exists a unique classical solution $u_4$ of \eqref{eq de base}-\eqref{condbord4} if and only if
\begin{equation}\label{Reg Data 4}
\varphi_1, \varphi_2 \in \left(D(P),X\right)_{1 + \frac{1}{2} + \frac{1}{2p},p} \quad \text{and} \quad \varphi_3, \varphi_4 \in \left(D(P),X\right)_{\frac{1}{2p},p}.
\end{equation}
\end{enumerate}
\end{Th}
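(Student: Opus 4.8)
The plan is to exploit the factorization of \eqref{eq de base}. Since $P$ and $Q$ are closed, boundedly invertible and commute by $(H_2)$--$(H_3)$, the left-hand side of \eqref{eq de base} is the composition of the two second-order operators $u\mapsto u''+Pu$ and $u\mapsto u''+Qu$, which commute; moreover $D(PQ)=D(P^2)=D(Q^2)$. With $L$ and $M$ as in \eqref{LM}, both generate bounded analytic semigroups, $L$, $M$ and $L+M$ commute, and $(L-M)(L+M)=L^2-M^2=P-Q$, which by $(H_5)$ extends to the invertible $B$. This last identity is the bridge between the boundary systems below and the operators $U$, $V$ of \eqref{U V gen}. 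Every candidate solution will be built from the four modes $e^{(x-a)M}$, $e^{(b-x)M}$, $e^{(x-a)L}$, $e^{(b-x)L}$ together with a particular solution, and \refL{Lem triebel} is the tool converting interpolation membership of the coefficients into the required regularity $u\in W^{4,p}(a,b;X)\cap L^p(a,b;D(PQ))$ with $u''\in L^p(a,b;D(P))$.

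For the ``only if'' direction in all three cases I would argue uniformly from the traces. Given a classical solution $u$, apply \refL{Lem trace} with $T_3=P$ (legitimate since $-P$ sectorial forces $(0,+\infty)\subset\rho(P)$): to $u$ with $n=4$, $m=2$, and to $u''$ with $n=2$, $m=1$. This gives $u^{(j)}(s)\in(D(P^2),X)_{\frac{j}{4}+\frac{1}{4p},p}$ for $j=0,1,2$ and $s\in\{a,b\}$. Passing to the $D(P)$-scale through the reiteration identities of \refR{Rem Réitération}, these become the spaces of smoothness order $2-\frac{1}{2p}$, $\frac{3}{2}-\frac{1}{2p}$ and $1-\frac{1}{2p}$, namely $(D(P),X)_{1+\frac{1}{2p},p}$ for $u(s)$, $(D(P),X)_{1+\frac12+\frac{1}{2p},p}$ for $u'(s)$ and $(D(P),X)_{\frac{1}{2p},p}$ for $u''(s)$ (using $D(P)=D(Q)$ to identify the two scales). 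Reading each boundary condition then yields exactly the stated conditions \eqref{Reg Data 2}, \eqref{Reg Data 3}, \eqref{Reg Data 4}; note in particular that for \eqref{condbord2} the quantity $u''(s)+Pu(s)$ inherits the regularity of $u''(s)$, which is why $\varphi_3,\varphi_4$ lie in the low-order space while $\varphi_1,\varphi_2=u'(s)$ lie in the high-order one.

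For the sufficiency in case \eqref{condbord2} I would decouple into two well-posed second-order problems, which is what allows $(H_6)$ to be dispensed with. Setting $w:=u''+Pu$, the function $w$ must solve $w''+Qw=f$ with the Dirichlet data $w(a)=\varphi_3$, $w(b)=\varphi_4$; this is solved explicitly through $L$ and $W=(I-e^{2cL})^{-1}$, exactly as in the $v_0$-formula \eqref{v Th carre}. One then recovers $u$ from the Neumann problem $u''+Pu=w$, $u'(a)=\varphi_1$, $u'(b)=\varphi_2$, solved through $M$ and the first-derivative analogue of $Z=(I-e^{2cM})^{-1}$. Under $(H_1)$--$(H_5)$ both subproblems are well-posed in the appropriate interpolation spaces, their composition produces the classical solution, and uniqueness descends from the uniqueness of each second-order problem.

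The genuine difficulty is the sufficiency for \eqref{condbord3} and \eqref{condbord4}, where the data do not match the factor $u''+Pu$ and no clean decoupling exists; here one must invert the full boundary system. Writing $u(x)=e^{(x-a)M}\alpha_1+e^{(b-x)M}\alpha_2+e^{(x-a)L}\beta_1+e^{(b-x)L}\beta_2+u_p(x)$ and imposing the four conditions yields a $4\times4$ operator system for $(\alpha_1,\alpha_2,\beta_1,\beta_2)$ whose entries involve $e^{cM}$, $e^{cL}$ and, after substituting $(L-M)(L+M)=B$, the combinations $e^{c(L+M)}$ and $B^{-1}(L+M)^2(e^{cM}-e^{cL})$. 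I expect the core of the proof to be the reduction of this system: after eliminating, the solvability must be shown equivalent to the invertibility of precisely $U=I-T^-$ and $V=I-T^+$ in \eqref{U V gen}, that is, to $(H_6)$. Tracking the commuting exponentials, keeping every $B$-factor on the correct side, and identifying the two Schur complements with $U$ and $V$ is where the real work lies. Once invertibility is secured, $\alpha_i$, $\beta_i$ land in the interpolation spaces prescribed by \refL{Lem triebel}, which delivers the classical solution together with its uniqueness. Finally, \eqref{condbord4} can be organized from \eqref{condbord3} via the substitution exchanging $u$ and $u'$, which carries one boundary system into the other and accounts for the symmetric pair of regularity conditions \eqref{Reg Data 3}--\eqref{Reg Data 4}.
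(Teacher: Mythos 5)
Your proposal is sound and, in all essentials, follows the paper's own route. The necessity argument is identical: traces via \refL{Lem trace} plus the reiteration of \refR{Rem Réitération} (the paper runs this through the $M$-scale in \refR{Rem trace} and then converts to the $P$-scale; you work in the $P$-scale directly, applying the trace lemma to $u$ with $n=4$, $m=2$ and to $u''$ with $n=2$, $m=1$, which is equivalent). For sufficiency, both you and the paper start from the representation \eqref{Repre u} of \refP{Proposition 1}, reduce to a $4\times 4$ operator system for the coefficients, and use \refL{Lem triebel}, \refL{Lem reg} and \refC{Cor diff exp} to convert interpolation membership of the data into the $W^{4,p}$-regularity of the solution. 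The one organizational difference is \eqref{condbord2}: you decouple into a Dirichlet problem for $w=u''+Pu$ (driven by $L$) followed by a Neumann problem for $u$ (driven by $M$), which makes transparent why $(H_6)$ is not needed there; the paper instead solves the full system following \cite{LMMT} (with $kI$ replaced by $B$), but its formulas \eqref{ALPHA 1 2 3 4} are exactly triangular --- $\alpha_2,\alpha_4$ depend only on $\varphi_3,\varphi_4$ through $B^{-1}$, then $\alpha_1,\alpha_3$ are recovered --- so the two treatments encode the same decoupling, and only the invertibility of $I\pm e^{cM}$, $I\pm e^{cL}$ is ever used. Note that your phrase ``their composition produces the classical solution'' hides the bootstrap from second-order to fourth-order regularity; this is precisely what the explicit formulas of \refT{Th Carre} carry out. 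For \eqref{condbord3}--\eqref{condbord4}, your prediction that elimination leads exactly to the operators $U$ and $V$ of \eqref{U V gen}, hence to $(H_6)$, is what the paper records in \eqref{ali gen} and \eqref{ali2 gen}; neither you nor the paper performs this elimination on the page --- the paper defers it to Theorem 2.5 of \cite{LMMT}.

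One claim should be withdrawn: that \eqref{condbord4} ``can be organized from \eqref{condbord3} via the substitution exchanging $u$ and $u'$''. Taken literally this fails: $u'$ does not solve an equation of type \eqref{eq de base} (that would require differentiating $f\in L^p(a,b;X)$), recovering $u$ from $u'$ leaves an undetermined constant, and the data spaces do not line up --- applying \eqref{Reg Data 3} to $u'$ would put $u'(a)$ in $\left(D(P),X\right)_{1+\frac{1}{2p},p}$ and $u''(a)$ in $\left(D(P),X\right)_{1+\frac{1}{2}+\frac{1}{2p},p}$, which is not \eqref{Reg Data 4}. What survives is only a formal symmetry of the linear systems: differentiating the exponential modes multiplies columns by $M$ or $L$, which is why, comparing \eqref{ali gen} with \eqref{ali2 gen}, the roles of $U$ and $V$ are exchanged and the bounded factors $LM^{-1}$, $ML^{-1}$ appear. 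So \eqref{condbord4} must be solved as its own system, as the paper does; since your primary plan (direct inversion of the boundary system) already covers both cases, this costs you nothing, but the shortcut as stated is not a proof.
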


\subsubsection{Some remarks}

\begin{Rem}\label{Rq1} \hfill
\begin{enumerate}

\item In \cite{LMMT}, problems
\begin{equation*}
\left\{ \begin{array}{l}
u^{(4)}(x) + (2A-kI)u''(x) + (A^2-kA)u(x) = f(x), \quad x \in (a,b), \\ \ecart 
\text{(BCi)},
\end{array}\right.
\end{equation*}
for each $i = 1,2,3,4,5$, correspond to problems \eqref{eq de base}-(BCi) where $P=A$ and $Q=A-kI$, with $k \in \RR$ such that $[k,+\infty) \subset \rho(A)$.

\item Assumptions $(H_2)$ and $(H_3)$ involve: $D(PQ) = D(QP) = D(P^2) = D(Q^2)$. Moreover, 
\begin{equation}\label{commut}
D(LM) = D(ML) = D(L^2) = D(M^2) \quad\text{and}\quad LM = ML.  
\end{equation}

\item Due to $(H_4)$ and \cite{haase}, Proposition 3.2.1, e), p. 71, we have
$$-L,-M \in \text{ BIP}\,(X,\theta_0/2);$$ 
so from \cite{pruss-sohr}, Theorem $4$, p. $441$, $-(L+M) \in$ BIP\,$(X,\theta_0/2+\varepsilon)$ with $\varepsilon >0$. Moreover, $L+M$ is invertible with bounded inverse.

\item Assumption $(H_5)$ means that operators $-P$ and $-Q$ satisfy the following property:
$$\exists B \in \L(X) ~:~ 0\in \rho(B) \quad \text{and} \quad P = Q + B.$$
When $P=A$ and $Q=A-\mu I$ with $\mu \in \CC\setminus\{0\}$, then $P-Q=\mu I \in \L(X)$ is invertible. 

\item Assume that $(H_1)$, $(H_2)$, $(H_3)$, $(H_4)$ and $(H_5)$ are satisfied.

Then, $(H_6)$ holds if $c=b-a$ is enough large. Indeed, since $L$, $M$ and $L+M$ are invertible with bounded inverse and generate bounded analytic semigroups, there exist $\delta >0$ and $C \geqslant 1$ (see \cite{lunardi}, (2.1.1) and (2.1.2) p. $35$ taking $\omega =-\delta $ where $\delta >0$ is enough small) such that
\begin{equation*}
\max \left( \left\Vert e^{c\left( L+M\right)} \right\Vert _{\mathcal{L}
\left( X\right) },\left\Vert M^{2}e^{cM}\right\Vert _{\mathcal{L}\left(
X\right) },\left\Vert L^{2}e^{cL}\right\Vert _{\mathcal{L}\left( X\right)
}\right) \leqslant Ce^{-\delta c},
\end{equation*}
thus
\begin{eqnarray*}
M_B & := & \max \left( \left\Vert T^-\right\Vert _{\mathcal{L}\left( X\right)},\left\Vert T^+\right\Vert_{\mathcal{L}(X) }\right) \\ \\
&\leqslant &~~\,\left\Vert e^{c \left( L+M\right) }\right\Vert _{\L(X)} + \left\Vert B^{-1}\left( L+M\right)^{2}M^{-2}\right\Vert _{\mathcal{L}\left( X\right) }\left\Vert
M^{2}e^{c M}\right\Vert _{\mathcal{L}\left( X\right) } \\ \ecart
&&+\left\Vert B^{-1}\left( L+M\right) ^{2}L^{-2}\right\Vert _{\mathcal{L}
\left( X\right) }\left\Vert L^{2}e^{c L}\right\Vert _{\mathcal{L}\left( X\right) } \\ \\
&\leqslant &\left(1 + \left\Vert B^{-1}\left( L+M\right)^{2}M^{-2} \right\Vert _{\mathcal{L}\left( X\right) }+\left\Vert B^{-1}\left(
L+M\right) ^{2}L^{-2}\right\Vert _{\mathcal{L}\left( X\right) }\right)
C e^{-\delta c }.
\end{eqnarray*}
Finally, for $c=b-a$ enough large, $\left\Vert T^-\right\Vert _{\L(X)}<1$ and $\left\Vert T^+\right\Vert _{\L(X)}<1$. It follows that $U$ and $V$ are invertible.

\item In some particular cases, we could check assumption $(H_6)$ using functional calculus for sectorial operators, see for instance \cite{LMMT} where $B = k \in \RR\setminus \{0\}$.
\end{enumerate}
\end{Rem}

\begin{Rem}\label{P-Q inclus dans T}\hfill

\begin{enumerate}

\item Note that $P-Q\subset B \in \L(X)$ involves that $P-Q$ is closable but, in general, is not closed. In fact, if $P-Q$ is closed, then due to $\overline{D(P-Q)} = X$ and $P-Q \subset B \in \L(X)$, we deduce that $D(P-Q) = X$, thus $D(P) = D(Q) = X$. Then, $P-Q$ is closed only if $P-Q \in \L(X)$. 

\item Since $L=-\sqrt{-Q}$ and $M=-\sqrt{-P}$ we have $P-Q=L^{2}-M^{2}$. Moreover%
\begin{equation}
(L-M)(L+M)=L^{2}-M^{2},  \label{L-M  L+M}
\end{equation}
indeed, it is well known that if $C_{j}$, $j=1,2,3,4,$ are linear operators,
then
$$C_{1}C_{3}+C_{1}C_{4}+C_{2}C_{3}+C_{2}C_{4}\subset \left( C_{1}+C_{2}\right)
\left( C_{3}+C_{4}\right),$$
but not necessary the equality. In our case, from \eqref{commut}, we have
$$L^{2}-M^{2}=L^{2}+LM-ML-M^{2}\subset \left( L-M\right) \left( L+M\right),$$
and also $D(L^{2}-M^{2})=D\left( M^{2}\right) $. To conclude it suffices to
check that 
$$D((L-M)(L+M))\subset D(L^{2}-M^{2})=D\left( M^{2}\right).$$
To this end, consider $\psi\in D((L-M)(L+M))$. Then $\psi\in D(L+M)$ and 
$$\left(L+M\right) \psi\in D\left( L-M\right) =D\left( M\right).$$ 
Thus, there exists $\chi\in X$ such that $\left( L+M\right) \psi=M^{-1}\chi$. Hence $M\psi=(L+M)^{-1}\chi\in D(M)$, that is $\psi\in D(M^{2}) = D(L^2)$.

\item Recall that $L^2-M^2 \subset B$ means that
$$\forall~ \psi \in D(M^2), \quad(L^2-M^2) \psi = B \psi.$$ 
Hence
\begin{equation} \label{(L-M)y=}
\forall~ \psi\in D(M), \quad (L-M) \psi = (P-Q)(L+M)^{-1} \psi = B (L+M)^{-1} \psi. 
\end{equation}

\item For $z\in \rho(-P) \cap \rho(-Q)$, we have
\begin{eqnarray*}
B (-Q-zI)^{-1} (-P-zI)^{-1} & = & (P-Q) (-Q-zI)^{-1} (-P-zI)^{-1} \\
& = & (-Q-zI+P+zI) (-Q-zI)^{-1} (-P-zI)^{-1} \\
& = & (-P-zI)^{-1} - (-Q-zI)^{-1}.
\end{eqnarray*}
Thus
\begin{equation}\label{resolv}
B (-Q-zI)^{-1} (-P-zI)^{-1} = (-P-zI)^{-1} - (-Q-zI)^{-1}.  
\end{equation}
\end{enumerate}
\end{Rem}

\subsection{Preliminary results} \label{sect prel}

\subsubsection{Particular solutions : proof of \refT{Th Carre}}

\begin{enumerate}
\item The proof is similar to the one of Theorem 2.2, p. 355 in \cite{LMMT}, thus we omit it.

\item It suffices to remark that condition \eqref{condbord5} writes as
\begin{equation*}
\left\{\begin{array}{rclrcl}
u(a) &= & \varphi_1, & u(b) &= &\varphi_2, \\
u''(a) &= &\varphi_3 - P \varphi_1, & u''(b) &= &\varphi_4 - P \varphi_2,
\end{array}\right.
\end{equation*}
and then to apply the first statement.
\end{enumerate}

\subsubsection{Representation formula} \label{Repre gen}

We begin by two technical lemmas. 

\begin{Lem} Assume that $(H_1)$, $(H_2)$, $(H_3)$, $(H_4)$ and $(H_5)$ hold. Then, $P^{-1}B = BP^{-1}$, which means that
\begin{equation}\label{commut P et T}
\forall ~\psi \in D(P),\quad B \psi\in D(P) \quad \text{and} \quad PB\psi = BP\psi.
\end{equation}
In the same way, we have $Q^{-1}B=BQ^{-1}$, which means that
\begin{equation}\label{commut Q et T}
\forall~ \psi\in D(Q), \quad B\psi\in D(Q) \quad \text{and} \quad QB\psi = BQ\psi.  
\end{equation}
\end{Lem}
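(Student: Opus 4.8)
The plan is to establish the bounded-operator identity $P^{-1}B=BP^{-1}$ on $X$ and then to deduce \eqref{commut P et T} from it, the statement \eqref{commut Q et T} following by the symmetry between $P$ and $Q$. Since $0\in\rho(P)$ by $(H_2)$ and $B\in\L(X)$ by $(H_5)$, both $P^{-1}B$ and $BP^{-1}$ belong to $\L(X)$, so it suffices to check that they coincide on the dense subspace $D(P)$. This is the natural test space because $B$ only \emph{extends} $P-Q$, so the explicit formula $B\psi=(P-Q)\psi$ is available precisely for $\psi\in D(P)=D(Q)$.

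First I would note that $QP^{-1}\in\L(X)$: it is everywhere defined since $P^{-1}X\subset D(P)=D(Q)$, and it is closed as the composition of the closed operator $Q$ with the bounded operator $P^{-1}$, hence bounded by the closed graph theorem. Then, for $\psi\in D(P)=D(Q)$, the explicit action of $B$ gives on one hand
\[
P^{-1}B\psi=P^{-1}(P-Q)\psi=\psi-P^{-1}Q\psi,
\]
and on the other hand, since $P^{-1}\psi\in D(P)=D(Q)$ as well,
\[
BP^{-1}\psi=(P-Q)P^{-1}\psi=\psi-QP^{-1}\psi.
\]
Thus the whole identity reduces to the commutation $P^{-1}Q\psi=QP^{-1}\psi$ on $D(Q)$, which is exactly where $(H_3)$ is used: writing $\eta:=Q\psi$, so that $\psi=Q^{-1}\eta$, the relation $P^{-1}Q^{-1}=Q^{-1}P^{-1}$ yields
\[
QP^{-1}\psi=QP^{-1}Q^{-1}\eta=Q\,Q^{-1}P^{-1}\eta=P^{-1}\eta=P^{-1}Q\psi.
\]
Hence $P^{-1}B$ and $BP^{-1}$ agree on $D(P)$, and by density and continuity they agree on all of $X$.

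Finally, reading off \eqref{commut P et T}: for $\psi\in D(P)$ write $\psi=P^{-1}(P\psi)$, so that $B\psi=BP^{-1}(P\psi)=P^{-1}B(P\psi)\in R(P^{-1})=D(P)$ and $PB\psi=B(P\psi)=BP\psi$. For \eqref{commut Q et T} I would observe that $-B\in\L(X)$ is an invertible extension of $Q-P$ and that $(H_2)$, $(H_3)$ are symmetric in $P$ and $Q$; applying the identity just proved with $(P,Q,B)$ replaced by $(Q,P,-B)$ gives $Q^{-1}(-B)=(-B)Q^{-1}$, i.e. $Q^{-1}B=BQ^{-1}$. The only delicate point in the whole argument is the domain bookkeeping forced by $B$ being merely an extension of $P-Q$: one must stay on $D(P)=D(Q)$ to use the explicit form of $B$, verify separately that $QP^{-1}$ is bounded, and only then propagate the identity to $X$ by continuity; the genuine mathematical content is the resolvent commutation supplied by $(H_3)$.
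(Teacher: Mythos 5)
Your proposal is correct and follows essentially the same route as the paper's proof: you verify the bounded-operator identity $P^{-1}B=BP^{-1}$ on the dense subspace $D(P)$, where $B=P-Q$ is available and $(H_3)$ supplies the commutation $P^{-1}Q\psi=QP^{-1}\psi$, extend by continuity, and then deduce \eqref{commut P et T} by writing $\psi=P^{-1}(P\psi)$, exactly as in the paper. The only differences are cosmetic: you spell out the reduction to $P^{-1}Q=QP^{-1}$ on $D(Q)$ that the paper merely asserts as ``resolvent commuting,'' and you obtain \eqref{commut Q et T} by the substitution $(P,Q,B)\mapsto(Q,P,-B)$ instead of repeating the argument.
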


\begin{proof}
First of all, note that from $(H_3)$, $P-Q$ is resolvent commuting with $P$, $Q$, $P+Q$ and all linear combination of $P$ and $Q$.

Let $\chi\in X$. There exists $\left( \chi_{n}\right) _{n\geqslant 0}\subset
D\left( P\right)$ such that $\chi_{n}\underset{n\longrightarrow +\infty }{\longrightarrow }\chi$. Then, since we have $P^{-1}B,BP^{-1}\in \mathcal{L}\left( X\right)$ and $B=P-Q$ on $D\left( P\right)$, we obtain
$$\begin{array}{lllllll}
P^{-1}B \chi &=&\underset{n\longrightarrow +\infty }{\lim }P^{-1}B \chi_{n} &=&\underset{n\longrightarrow +\infty }{\lim }P^{-1}\left( P-Q\right) \chi_{n} &=&\underset{n\longrightarrow +\infty }{\lim }\left( P-Q\right) P^{-1} \chi_{n} \\ \ecart
&=&\underset{n\longrightarrow +\infty }{\lim }BP^{-1}\chi_{n} &=&BP^{-1}\chi.
\end{array}$$
Thus, $P^{-1}B=BP^{-1}$.

If $\psi\in D(P)$ then, setting $\chi=P\psi$, we have 
\begin{equation*}
B\psi=BP^{-1}\chi=P^{-1}B\chi=P^{-1}BP\psi.
\end{equation*}
Then $B\psi=P^{-1}BP\psi \in D(P)$ and $PB\psi=BP\psi$. In the same way, replacing $P$ by $Q$ in the previous proof, we obtain \eqref{commut Q et T}.
\end{proof}

\begin{Lem} Assume that $(H_1)$, $(H_2)$, $(H_3)$, $(H_4)$ and $(H_5)$ hold. Then, $D(L) = D(M)$.
\end{Lem}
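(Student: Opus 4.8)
The plan is to identify $D(L)$ and $D(M)$ with the domains of the fractional powers of order $1/2$ of the operators $-Q$ and $-P$, and then to exploit the bounded imaginary powers hypothesis $(H_4)$ to rewrite these domains as complex interpolation spaces, for which the equality becomes transparent. Since $L=-\sqrt{-Q}$ and $M=-\sqrt{-P}$, we have $D(L)=D((-Q)^{1/2})$ and $D(M)=D((-P)^{1/2})$, where $(-P)^{1/2},(-Q)^{1/2}$ denote the sectorial square roots; so the statement is equivalent to $D((-P)^{1/2})=D((-Q)^{1/2})$.

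First I would record the ingredients furnished by the hypotheses. By $(H_2)$ and $(H_4)$, $-P$ and $-Q$ are injective sectorial operators, boundedly invertible (since $0\in\rho(P)\cap\rho(Q)$) and belonging to $\mathrm{BIP}(X,\theta_0)$. By $(H_3)$ their domains coincide, $D(P)=D(Q)$, and I would first check that the two graph norms are equivalent: the operator $QP^{-1}$ is everywhere defined (as $P^{-1}:X\to D(P)=D(Q)$ and then $Q$ acts), closed, hence bounded by the closed graph theorem, and symmetrically for $PQ^{-1}$; thus $\|P\,\psi\|_X\simeq\|Q\,\psi\|_X$ for $\psi\in D(P)=D(Q)$.

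Next I would invoke the characterization of fractional-power domains as complex interpolation spaces, valid precisely because $-P,-Q\in\mathrm{BIP}$: for a boundedly invertible operator $T\in\mathrm{BIP}(X,\theta)$ and $\alpha\in(0,1)$ one has $D(T^{\alpha})=[X,D(T)]_{\alpha}$ with equivalent norms, where $[\cdot,\cdot]_{\alpha}$ denotes complex interpolation (see \cite{triebel}, Section 1.15.3, or \cite{haase}). Applying this with $\alpha=1/2$ gives
\[
D(M)=D((-P)^{1/2})=[X,D(P)]_{1/2}\quad\text{and}\quad D(L)=D((-Q)^{1/2})=[X,D(Q)]_{1/2}.
\]
Since $D(P)=D(Q)$ with equivalent graph norms, the interpolation couples $(X,D(P))$ and $(X,D(Q))$ coincide, whence $[X,D(P)]_{1/2}=[X,D(Q)]_{1/2}$ and therefore $D(L)=D(M)$.

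The heart of the argument, and its only nontrivial input, is the interpolation identity $D(T^{1/2})=[X,D(T)]_{1/2}$, which is exactly where the bounded imaginary powers assumption $(H_4)$ is indispensable: for a general sectorial operator the square-root domain need not be an interpolation space. An alternative, more self-contained route avoiding complex interpolation would be to show directly that $(-P)^{1/2}(-Q)^{-1/2}$ extends to a bounded, boundedly invertible operator; using $(H_3)$ and the resolvent commutation from the preceding lemma one formally has $(-P)^{1/2}(-Q)^{-1/2}=\bigl((-P)(-Q)^{-1}\bigr)^{1/2}=(I+BQ^{-1})^{1/2}$, but turning this into a rigorous identity requires the multiplicativity of fractional powers for commuting operators together with the sectoriality of the bounded operator $I+BQ^{-1}$, so I would prefer the interpolation approach as the cleaner one.
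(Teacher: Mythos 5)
Your proof is correct, but it follows a genuinely different route from the paper's. The paper argues directly with the Dunford--Riesz integral representation of $\sqrt{-P}$ and $\sqrt{-Q}$: membership in $D(M)$ (resp.\ $D(L)$) is characterized by a certain contour integral landing in $D(P)$, and the difference of the two integrals is shown to lie in $D(P)=D(Q)$ by means of the resolvent identity \eqref{resolv}, which rests on hypothesis $(H_5)$ (the bounded invertible extension $B$ of $P-Q$) together with the commutation statements of the preceding lemma. You instead pass through the characterization of fractional-power domains as complex interpolation spaces: the closed graph theorem gives equivalence of the graph norms of $P$ and $Q$ on their common domain, and $(H_4)$ yields $D((-P)^{1/2})=[X,D(P)]_{1/2}$ and $D((-Q)^{1/2})=[X,D(Q)]_{1/2}$ (Triebel \cite{triebel}, Section 1.15.3; note this identity holds in an arbitrary Banach space, so $(H_1)$ is not needed), whence the equality is immediate. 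Your argument is shorter, gives the stronger conclusion that $D(L)$ and $D(M)$ carry equivalent norms, and---notably---does not use $(H_5)$ at all, nor the resolvent commutation in $(H_3)$: only $D(P)=D(Q)$, closedness, $0\in\rho(P)\cap\rho(Q)$ and BIP enter. What the paper's computation buys in exchange is self-containedness: it stays entirely within the functional-calculus machinery (sectorial curves, resolvent identities) already deployed throughout Section 3, rather than importing the nontrivial interpolation theorem, and its algebraic mechanism (the role of $B$) is the same one reused in the later representation formulas. You are also right to be cautious about your sketched alternative $(-P)^{1/2}(-Q)^{-1/2}=(I+BQ^{-1})^{1/2}$; making that rigorous requires justifying multiplicativity of fractional powers for the commuting pair, which is exactly the kind of step the interpolation route lets you avoid.
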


\begin{proof}
By definition, using the Dunford-Riesz integral, we have
\begin{equation*}
\left\{\begin{array}{lll}
\dis -M = \sqrt{-P} & = & \dis \frac{1}{2i\pi} \left( I-P\right) \int_{\gamma} \frac{\sqrt{z}}{1+z} \left(-P-zI\right)^{-1}~dz \\ \ecart
\dis -L = \sqrt{-Q} & = & \dis\frac{1}{2i\pi} \left( I-Q\right) \int_{\gamma} \frac{\sqrt{z}}{1+z} \left(-Q-zI\right)^{-1}~dz,
\end{array}\right.
\end{equation*}
where $\gamma$ is a sectorial curve surrounding $\sigma \left(-P\right) \cup \sigma \left(-Q\right)$, see \cite{haase}, p. 61. Moreover,
\begin{equation*}
\left\{\begin{array}{lll}
\dis \int_{\gamma }\frac{\sqrt{z}}{1+z}\left(-P-zI\right)^{-1}~dz \in \L(X) \\ \ecart
\dis \int_{\gamma }\frac{\sqrt{z}}{1+z}\left(-Q-zI\right)^{-1}~dz \in \L(X),
\end{array}\right.
\end{equation*} 
and 
\begin{equation}\label{syst int P/Q in L(X)}
\left\{\begin{array}{lll}
\dis \psi \in D(M) = D(\sqrt{-P}) &\Longleftrightarrow &\dis\int_{\gamma }\frac{\sqrt{z}}{1+z}\left( -P-zI\right) ^{-1}\psi~ dz\in
D\left( P\right) \\ \ecart

\dis \psi \in D(L) = D(\sqrt{-Q}) &\Longleftrightarrow &\dis\int_{\gamma }\frac{\sqrt{z}}{1+z}\left( -Q-zI\right) ^{-1}\psi~ dz\in
D\left( P\right).
\end{array}\right.
\end{equation}
Now, we will show that $D(L) \subset D(M)$. Let $\psi \in D(L)$, then we have 
\begin{eqnarray*}
\int_{\gamma }\frac{\sqrt{z}}{1+z}\left( -Q-zI\right) ^{-1}\psi ~dz & = & \int_{\gamma }\frac{\sqrt{z}}{1+z}\left( -P-zI\right) \left(-Q-zI\right) ^{-1}\left( -P-zI\right) ^{-1}\psi ~dz \\
&=&\int_{\gamma }\frac{\sqrt{z}}{1+z}\left( -P+Q-Q-zI\right) \left(
-Q-zI\right) ^{-1}\left( -P-zI\right) ^{-1}\psi ~dz \\
&=&\int_{\gamma }\frac{\sqrt{z}}{1+z}\left( -P+Q\right) \left( -Q-zI\right)
^{-1}\left( -P-zI\right) ^{-1}\psi ~dz \\
&&+\int_{\gamma }\frac{\sqrt{z}}{1+z}\left( -P-zI\right) ^{-1}\psi ~dz \\
&=&-\int_{\gamma }\frac{\sqrt{z}}{1+z}B\left( -Q-zI\right) ^{-1}\left(-P-zI\right) ^{-1}\psi~ dz \\
&&+\int_{\gamma }\frac{\sqrt{z}}{1+z}\left(-P-zI\right) ^{-1}\psi~ dz \\
&=&-BQ^{-1}\zeta +\int_{\gamma }\frac{\sqrt{z}}{1+z}\left( -P-zI\right)^{-1}\psi ~dz,
\end{eqnarray*}
where
$$\zeta := \int_\gamma \frac{\sqrt{z}}{1+z}Q (-Q-zI)^{-1} (-P-zI)^{-1}\psi ~dz\in X.$$ 
Then, since $\psi \in D(L)$ and due to \eqref{syst int P/Q in L(X)}, we have
\begin{equation*}
\int_\gamma \frac{\sqrt{z}}{1+z} (-P-zI)^{-1} \psi~ dz =  Q^{-1}B\zeta + \int_\gamma \frac{\sqrt{z}}{1+z} (-Q-zI)^{-1}\psi~ dz \in D(Q)=D(P),
\end{equation*}
Thus, from \eqref{syst int P/Q in L(X)}, we deduce that $\psi \in D(\sqrt{-P}) = D(M)$. 

Replacing $P$ by $Q$ in the previous proof, we show that $D(M)\subset D(L)$.
\end{proof}

Now, we give an explicit representation formula of the classical solution $u$ of equation \eqref{eq de base}.

\begin{Prop}\label{Proposition 1}
Let $f\in L^p(a,b;X)$, $p \in (1, +\infty)$. Assume that $(H_1)$, $(H_2)$, $(H_3)$, $(H_4)$ and $(H_5)$ hold. If $u$ is a classical solution of \eqref{eq de base}, then there exists $K_1, K_2, K_3, K_4 \in X$, such that for all $x\in [a,b]$, we have
\begin{equation}\label{Repre u}
u(x) = e^{(x-a)M} K_1 + e^{(b-x)M} K_2 + e^{(x-a)L} K_3 + e^{(b-x)L} K_4 + F_{0 ,f}(x),
\end{equation}
where $F_{0,f}$ is defined in \refT{Th Carre} with $\Phi = 0 = (0,0,0,0)$.
\end{Prop}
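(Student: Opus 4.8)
The plan is to exploit the factorization of the fourth-order operator. Since $M^2=-P$ and $L^2=-Q$ by \eqref{LM}, and since $LM=ML$ with $D(L^2)=D(M^2)$ by \eqref{commut}, the operator acting on $u$ in \eqref{eq de base} is the commuting product
\[
u^{(4)}+(P+Q)u''+PQu=\Big(\tfrac{d^2}{dx^2}-M^2\Big)\Big(\tfrac{d^2}{dx^2}-L^2\Big)u ,
\]
the identity $M^2L^2=PQ$ and the absence of a surviving cross term being guaranteed by \eqref{commut}. This reduces matters to two successive second-order abstract equations, for which $M$ and $L$ generate bounded analytic semigroups and are boundedly invertible by $(H_2)$ and $(H_4)$.

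First I would reduce to the homogeneous problem. By \refT{Th Carre} applied with $\Phi=0$, the function $F_{0,f}$ is itself a classical solution of \eqref{eq de base}, so $\tilde u:=u-F_{0,f}$ is a classical solution of the homogeneous equation $u^{(4)}+(P+Q)u''+PQu=0$. It then suffices to prove that any such $\tilde u$ is of the form $e^{(x-a)M}K_1+e^{(b-x)M}K_2+e^{(x-a)L}K_3+e^{(b-x)L}K_4$, after which adding back $F_{0,f}$ yields \eqref{Repre u}.

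To characterize the homogeneous solutions I would peel off one factor at a time. Setting $w:=\tilde u''-L^2\tilde u$, the regularity $\tilde u\in W^{4,p}(a,b;X)\cap L^p(a,b;D(PQ))$ together with $\tilde u''\in L^p(a,b;D(P)\cap D(Q))$ gives $w\in W^{2,p}(a,b;X)\cap L^p(a,b;D(M^2))$, and a direct computation using \eqref{commut} shows $w''-M^2w=0$. For this second-order homogeneous equation I would factor once more, $v:=w'-Mw$, so that $v'+Mv=0$; by uniqueness for the (backward) first-order Cauchy problem associated with the generator $M$, the only admissible solutions are $v(x)=e^{(b-x)M}c$, and integrating back gives $w(x)=e^{(x-a)M}\alpha+e^{(b-x)M}\beta$ for some $\alpha,\beta\in X$. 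Finally I would solve $\tilde u''-L^2\tilde u=w$: its homogeneous part contributes $e^{(x-a)L}K_3+e^{(b-x)L}K_4$, while a particular solution is sought in the form $e^{(x-a)M}\eta_1+e^{(b-x)M}\eta_2$. Here the invertibility of $B$ from $(H_5)$ is decisive: since $M^2-L^2=-(P-Q)=-B$ is bounded, boundedly invertible, and commutes with the semigroup generated by $M$ (by \eqref{commut P et T}), one takes $\eta_1,\eta_2$ proportional to $B^{-1}\alpha,B^{-1}\beta$, and these exponential-in-$M$ terms merge with the first two contributions to produce $K_1,K_2$.

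The main obstacle is not the algebra but the regularity bookkeeping: at each reduction one must verify that the intermediate functions $w$ and $v$ land in the correct maximal-regularity classes so that the reduced second- and first-order equations are solved in the classical sense. This uses $D(L)=D(M)$, the commutations \eqref{commut P et T}--\eqref{commut Q et T}, and the closedness of $L$ and $M$ to pull these operators through the derivatives and the semigroups. The second delicate point is the uniqueness statement for $v'+Mv=0$, which rests on $M$ generating a bounded analytic (hence $C_0$) semigroup with $0\in\rho(M)$. Once these are secured, collecting the four exponential families is immediate.
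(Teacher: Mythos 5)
Your overall strategy is sound and it is genuinely different from the paper's. The paper does not factor sequentially: it sets $v:=-QB^{-1}u_{hom}-B^{-1}u_{hom}''$ and $w:=PB^{-1}u_{hom}+B^{-1}u_{hom}''$, checks $v''+Pv=0$, $w''+Qw=0$ and $v+w=(P-Q)B^{-1}u_{hom}=u_{hom}$, and then \emph{quotes} \cite{falamaintaya} for the representation of classical solutions of the two second-order homogeneous equations. The point of that particular splitting is that $v$ and $w$ land in $W^{2,p}(a,b;X)\cap L^p(a,b;D(P))$ (resp. $D(Q)$) immediately, because the definition of a classical solution already contains $u_{hom}''\in L^p(a,b;D(P)\cap D(Q))$ and $B^{-1}$ commutes with everything; no new regularity has to be produced. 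Your first reduction $w:=\tilde u''-L^2\tilde u$ enjoys the same luxury (that step is fine), but from there on you are effectively re-proving the result the paper cites, and that is where your argument has a genuine gap.

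Concretely: to define $v:=w'-Mw$ and run the uniqueness argument for $v'+Mv=0$ you need $v\in W^{1,p}(a,b;X)\cap L^p(a,b;D(M))$, i.e.\ the mixed regularity $w'\in L^p(a,b;D(M))$ (equivalently $Mw\in W^{1,p}(a,b;X)$). This does \emph{not} follow from closedness of $M$, the commutations \eqref{commut P et T}--\eqref{commut Q et T}, and $D(L)=D(M)$, which is the toolkit you invoke; for $w\in W^{2,p}(a,b;X)\cap L^p(a,b;D(M^2))$ it is a mixed-derivative theorem of Dore--Venni type (requiring UMD and BIP, cf.\ \cite{dore-venni}), or it must be bypassed, e.g.\ by mollifying in $x$ so that the ODE argument is applied to smooth $D(M)$-valued functions, or by simply quoting \cite{falamaintaya} as the paper does. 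A second, smaller gap sits in your last step: for arbitrary $\alpha\in X$ the function $x\mapsto e^{(x-a)M}B^{-1}\alpha$ need not belong to $W^{2,p}(a,b;X)\cap L^p(a,b;D(M^2))$ --- by \refL{Lem triebel} this requires $B^{-1}\alpha\in\left(D(M),X\right)_{1+\frac{1}{p},p}$ --- so $\tilde u-\tilde u_p$ is not visibly a classical solution of $z''-L^2z=0$, and the representation of homogeneous solutions cannot be applied to it as stated. To repair this you must first show that the constants $\alpha,\beta$ obtained for $w$ automatically lie in $\left(D(M),X\right)_{1+\frac{1}{p},p}$ (localize the $L^p(D(M^2))$-regularity of $w$ near each endpoint and use \refL{Lem triebel}), and that $B^{-1}$ preserves this space by commutation. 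Both gaps are fillable, but as written the proof is incomplete precisely at the step that the paper's choice of decomposition, together with its citation, was designed to avoid.
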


\begin{proof}
If $u$ is a classical solution of \eqref{eq de base}, due to \refT{Th Carre}, we can take the classical solution $F_{0,f}$ of \eqref{eq de base}-\eqref{condbord1} as a particular solution; $i.e.$
\begin{equation}\label{condbord1homogene gen}
F_{0,f}(a)=F_{0,f}(b)=F_{0,f}''(a)=F_{0,f}''(b) = 0.
\end{equation}
Then $u_{hom}:=u-F_{0 ,f}$ is a classical solution of 
\begin{equation*}
u^{(4)}(x)+(P+Q)u''(x)+PQu(x)=0, \quad x \in (a,b).
\end{equation*}
We set
\begin{equation*}
v := -QB^{-1}u_{hom}-B^{-1}u_{hom}'', \quad \text{and} \quad w := PB^{-1}u_{hom}+B^{-1}u_{hom}''.
\end{equation*}
Moreover, from \eqref{commut Q et T}, we have
$$
v''+Pv = - B^{-1}\left( u_{hom}^{(4)}+(P+Q)u_{hom}'' + PQ u_{hom} \right)  =  0,
$$
and
$$
w''+Qw = B^{-1}\left( u_{hom}^{(4)} + (P+Q) u_{hom}'' + PQ u_{hom}\right) = 0.
$$
Note that $u_{hom}'' \in L^p(a,b;D(P))$ then, from \eqref{commut P et T}, $PB u_{hom}'' = BP u_{hom}''$ in $L^p(a,b;X)$. In the same way $QB u_{hom}'' = BQ u_{hom}''$. Moreover, since $u_{hom}\in L^{p}(a,b;D(PQ))$, we have
\begin{equation*}
QPB u_{hom} = BQP u_{hom} = PQB u_{hom} = BPQ u_{hom}.
\end{equation*}
Then, from \cite{falamaintaya}, there exist $K_1, K_2, K_3, K_4 \in X$ such that
\begin{equation*}
v(x) = e^{(x-a)M} K_1 + e^{(b-x)M} K_2 \quad \text{and} \quad w(x) = e^{(x-a)L} K_3 + e^{(b-x)L} K_4.
\end{equation*}
Finally, since $v+w = (P-Q) B^{-1}u_{hom} = u_{hom}$, we obtain \eqref{Repre u}.
\end{proof}

\subsubsection{Regularity of the difference of analytic semigroups}\label{section reg diff semi-groupe}

\begin{Def}\label{Def MR}
A linear operator $\Lambda$ on $X$, satisfies maximal regularity property $(\mathcal{MR})$ if and only if: there exists $q \in (1,+\infty)$ and $a,b \in \RR$ with $a<b$ such that, for all $h\in L^{q}(a,b;X)$, there exists a unique $u\in W^{1,q}(a,b;X) \cap L^q(a,b;D(\Lambda))$ satisfying
\begin{equation*}
\left\{ 
\begin{array}{l}
u'(x)=\Lambda u(x)+h(x),~~\text{a.e.}~x\in (a,b) \\ 
u(a)=0.
\end{array}
\right.
\end{equation*}
\end{Def}
Due to \cite{dore}, Theorem 2.2, Theorem 2.4, Theorem 4.2, and \cite{dore-venni}, Theorem 3.2, we have 
\begin{enumerate}
\item $\Lambda$ satisfies $(\mathcal{MR})$ implies that $\Lambda$ is the infinitesimal generator of an analytic semigroup.

\item $(\mathcal{MR})$ is independent of $q$, $a$ and $b$.

\item $-\Lambda \in $ BIP\,$(X,\theta),~0<\theta <\pi /2$ involves that $\Lambda $ satisfies $(\mathcal{MR})$.
\end{enumerate}

Consider $\Lambda_1$, $\Lambda_2$ and $\B$ three linear operators on $X$ such that
\begin{equation*}
\left\{ \begin{array}{l}
\Lambda_1\text{ satisfies }\mathcal{(MR)} \\ 
0\in \rho \left( \Lambda_1\right)  \\ 
\B\in \mathcal{L}\left( X\right) \\
\Lambda_1 \B = \B \Lambda_1\text{ on }D(\Lambda_1)\text{ (commutative case)} \\ 
\Lambda_2 = \Lambda_1 + \B.
\end{array}\right. 
\end{equation*}
Then, from \cite{tanabe}, Theorem 3.4.1, p. 71 or \cite{engel-nagel}, Chapter III section 1.3, p. 158, we deduce that $\Lambda_2$ is the infinitesimal generator of an analytic semigroup. 

In the sequel, for a linear operator $T$ on $X$, we set
$$D(T^\infty) = \bigcap_{n\in \NN} D(T^n).$$

\begin{Th}\label{Reg Diff exp}
Let $\psi \in X$. For $x \in (a,b) \subset \RR$, with $a<b$, we set
\begin{equation*}
u_\psi (x) = \left( e^{(x-a) \Lambda_2} - e^{(x-a) \Lambda_1}\right) \psi \quad \text{and} \quad v_\psi (x) = \left( e^{(b-x) \Lambda_2} - e^{(b-x) \Lambda_1}\right) \psi.
\end{equation*}

Then, for all $m,q\in \NN \setminus \left\{0,1\right\}$, the following properties hold:

\begin{enumerate}
\item $u_{\psi }\in C^{\infty }\left((a,b];X\right) \cap C^{0}\left(
[a,b];X\right)$, moreover for $\ell \geqslant 1$ and $x\in (a,b]$
\begin{equation*}
u_{\psi }^{\left( \ell \right) }\left( x\right) =\Lambda_1^{\ell }u_{\psi }\left(
x\right) + T_{\ell }\Lambda_1^{\ell -1}e^{\left( x-a\right) \Lambda_2}\B\psi,
\end{equation*}
where $T_{\ell }=\sum\limits_{k=1}^{\ell }\Lambda_2^{k-1}\Lambda_1^{-\left( k-1\right)}\in \L(X).$

Note that $\Lambda_2^{0}=\Lambda_1^{0}=I$ then in particular $T_{1}=I$.

\item $u_{\psi }\in W^{1,p}\left( a,b;X\right) \cap L^{p}\left(
a,b;D(\Lambda_1)\right)$.

\item $u_{\psi }\in W^{m,p}\left( a,b;X\right) \cap L^{p}\left(
a,b;D(\Lambda_1^{m})\right)$ if and only if  
\begin{equation*}
\B\psi \in \left(D\left(\Lambda_1^{m-1}\right),X\right)_{\frac{1}{\left(m-1\right) p},p},
\end{equation*}

\item If $u_{\psi }\in W^{m,p}\left( a,b;X\right) \cap L^{p}\left(
a,b;D(\Lambda_1^{m})\right)$, then for all $\ell \in \left\{ 1,...,m-1\right\}$,
\begin{equation*}
u_{\psi }^{\left( \ell \right) }\in W^{m-\ell ,p}\left( a,b;X\right) \cap
L^{p}\left( a,b;D(\Lambda_1^{m-\ell })\right) .
\end{equation*}

\item If $\B$ satisfies $\B\left( X\right) \subset \left( D\left(\Lambda_1^{q-1}\right) ,X\right) _{\frac{1}{\left( q-1\right) p},p}$, with $q\in(1,+\infty)$, then for all $\psi \in X$:
\begin{equation*}
u_{\psi }\in W^{q,p}\left( a,b;X\right) \cap L^{p}\left( a,b;D(\Lambda_1^{q})\right).
\end{equation*}

\item If moreover $\B$ satisfies
\begin{enumerate}
\item $\B(X) \subset D(\Lambda_1^{q})\text{ then }\Lambda_1^{q} \B \in \mathcal{L}\left(X\right)$

\item $0\in \rho \left( \Lambda_1^{q} \B \right),$
\end{enumerate}
then, we have:
\begin{enumerate}
\item $\forall~\psi \in X : u_{\psi }\in W^{q+1,p}\left( a,b;X\right) \cap
L^{p}\left( a,b;D(\Lambda_1^{q+1})\right) $

\item Let $m\geqslant q+2$, with $q\in(1,+\infty)$. Then $u_{\psi }\in W^{m,p}\left( a,b;X\right) \cap
L^{p}\left( a,b;D(\Lambda_1^{m})\right)$ if and only if
\begin{equation*}
\psi \in \left( D\left( \Lambda_1^{m-q-1}\right) ,X\right) _{\frac{1}{\left(
m-q-1\right) p},p}.
\end{equation*}
\end{enumerate}
\item The six previous statements hold if we replace $u_\psi$ by $v_\psi$.
\end{enumerate}
\end{Th}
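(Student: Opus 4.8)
The plan is to reduce everything to one representation. Since $\B\in\L(X)$ commutes with $\Lambda_1$ (hence with its resolvent and its semigroup), the two generators satisfy $e^{(x-a)\Lambda_2}=e^{(x-a)\Lambda_1}e^{(x-a)\B}$, so that
\[
u_\psi(x)=e^{(x-a)\Lambda_1}\bigl(e^{(x-a)\B}-I\bigr)\psi ,
\]
and, by Duhamel's formula, $u_\psi(x)=\int_a^x e^{(x-\sigma)\Lambda_1}\B\,e^{(\sigma-a)\Lambda_2}\psi\,d\sigma$. The family $x\mapsto e^{(x-a)\B}$ is smooth, uniformly bounded and boundedly invertible on $[a,b]$ and commutes with $\Lambda_1$, so multiplication by it preserves $L^p(a,b;X)$ and each interpolation class. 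First I would prove statement~1 by induction on $\ell$: from $u_\psi'(x)=\Lambda_1 u_\psi(x)+\B e^{(x-a)\Lambda_2}\psi$ and $\Lambda_2\Lambda_1^{-1}=I+\B\Lambda_1^{-1}\in\L(X)$ one gets the recursion $T_{\ell+1}=I+T_\ell\Lambda_2\Lambda_1^{-1}$, which sums to the stated $T_\ell$; smoothness on $(a,b]$ and continuity at $a$ (with $u_\psi(a)=0$) follow from analyticity and strong continuity of the two semigroups. Statement~2 is then a direct estimate: $\|\Lambda_1 e^{(x-a)\Lambda_1}\|\leq C/(x-a)$ against $\|e^{(x-a)\B}-I\|\leq C(x-a)$ gives $\Lambda_1 u_\psi\in L^\infty\subset L^p$, and $u_\psi'=\Lambda_1 u_\psi+e^{(x-a)\Lambda_2}\B\psi\in L^p$.

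Statement~3 is the core. Writing $\phi:=\B\psi$ and $g(x):=\Lambda_1^{m-1}e^{(x-a)\Lambda_2}\phi$, the derivative formula gives $u_\psi^{(m)}=\Lambda_1^m u_\psi+T_m g$. Because multiplication by $e^{(\cdot-a)\B}$ preserves $L^p$, \refL{Lem triebel} (items~2 and~4) together with the reiteration identity $(D(\Lambda_1^{m-1}),X)_{1/((m-1)p),p}=(D(\Lambda_1),X)_{(m-2)+1/p,p}$ of \refR{Rem Réitération} yield $g\in L^p\iff\phi\in(D(\Lambda_1^{m-1}),X)_{1/((m-1)p),p}$. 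For the sufficiency direction I assume $\phi$ lies in that space, so $g\in L^p$; from the identity $\Lambda_1^m u_\psi(x)=\int_a^x\Lambda_1 e^{(x-\sigma)\Lambda_1}g(\sigma)\,d\sigma$ and the maximal regularity $(\mathcal{MR})$ of $\Lambda_1$ (\refD{Def MR}) one obtains $\Lambda_1^m u_\psi\in L^p$, and since $T_m\in\L(X)$ also $u_\psi^{(m)}\in L^p$; with statement~2 this gives $u_\psi\in W^{m,p}(a,b;X)\cap L^p(a,b;D(\Lambda_1^m))$.

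The main obstacle is the \emph{necessity} direction of statement~3: as $T_m$ is not invertible, one cannot read $g\in L^p$ off $T_m g\in L^p$. My plan is an induction on $m$ (base case $m=2$). Expanding $e^{(x-a)\B}-I=\sum_{n=1}^{m-1}\tfrac{(x-a)^n}{n!}\B^{n}+O((x-a)^m)$ in $\Lambda_1^m u_\psi(x)=\Lambda_1^m e^{(x-a)\Lambda_1}(e^{(x-a)\B}-I)\psi$, the analytic bound $\|(x-a)^m\Lambda_1^m e^{(x-a)\Lambda_1}\|\leq C$ shows the remainder is bounded, hence in $L^p$; for $m=2$ there are no further terms. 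For $m\geq 3$ the intermediate terms ($2\leq n\leq m-1$) reduce, via $\|(x-a)^{n}\Lambda_1^{2}e^{(x-a)\Lambda_1}\|\leq C$ and the semigroup law, to $\Lambda_1^{m-2}e^{(\cdot-a)\Lambda_1}\B^{n-1}\phi$, which lies in $L^p$ by the induction hypothesis at order $m-1$ (applicable since $u_\psi\in W^{m-1,p}\cap L^p(D(\Lambda_1^{m-1}))$ forces $\phi\in(D(\Lambda_1^{m-2}),X)_{1/((m-2)p),p}$) together with the $\B$-invariance of that class. Subtracting all these from $\Lambda_1^m u_\psi\in L^p$ isolates the leading term $H(x):=(x-a)\Lambda_1^m e^{(x-a)\Lambda_1}\phi\in L^p$. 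Since $\frac{d}{dx}\Lambda_1^{m-1}e^{(x-a)\Lambda_1}\phi=H(x)/(x-a)$, a Hardy inequality on $(a,b)$ gives $\Lambda_1^{m-1}e^{(\cdot-a)\Lambda_1}\phi\in L^p$, i.e. $\phi\in(D(\Lambda_1),X)_{(m-2)+1/p,p}=(D(\Lambda_1^{m-1}),X)_{1/((m-1)p),p}$, closing the induction. I expect this Hardy-plus-induction extraction of the top-order singularity to be the delicate point.

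Finally, statements~4--7 follow formally. Statement~4 combines statements~1 and~3 with \refL{Lem triebel}: each summand of $u_\psi^{(\ell)}=\Lambda_1^\ell u_\psi+T_\ell\Lambda_1^{\ell-1}e^{(\cdot-a)\Lambda_2}\phi$ lies in $W^{m-\ell,p}\cap L^p(a,b;D(\Lambda_1^{m-\ell}))$ because $\phi\in(D(\Lambda_1),X)_{(m-2)+1/p,p}$ embeds into all lower classes required. Statement~5 is statement~3 with $m=q$, since the hypothesis forces $\B\psi\in(D(\Lambda_1^{q-1}),X)_{1/((q-1)p),p}$ for every $\psi$. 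For statement~6(a) one applies statement~3 with $m=q+1$, using $\B\psi\in D(\Lambda_1^q)\subset(D(\Lambda_1^q),X)_{1/(qp),p}$; for~6(b), writing $\B=\Lambda_1^{-q}(\Lambda_1^q\B)$ with $\Lambda_1^q\B\in\L(X)$ boundedly invertible and commuting with $\Lambda_1$, the condition $\B\psi\in(D(\Lambda_1^{m-1}),X)_{1/((m-1)p),p}$ from statement~3 is equivalent, via \refL{Lem triebel}, to $\Lambda_1^{m-1-q}e^{(\cdot-a)\Lambda_1}\psi\in L^p$, that is $\psi\in(D(\Lambda_1^{m-q-1}),X)_{1/((m-q-1)p),p}$. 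Statement~7 follows from the reflection $x\mapsto a+b-x$, an isometry of $W^{m,p}(a,b;X)$ carrying $v_\psi$ to $u_\psi$.
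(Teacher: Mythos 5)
Your proposal is correct in substance, and most of its skeleton coincides with the paper's proof: statement~1 via the telescoping identity for $\Lambda_2^{\ell}-\Lambda_1^{\ell}$, the sufficiency half of statement~3 via the Duhamel representation $u_\psi(x)=\int_a^x e^{(x-s)\Lambda_1}e^{(s-a)\Lambda_2}\B\psi\,ds$, the factorization $e^{(s-a)\Lambda_2}=e^{(s-a)\B}e^{(s-a)\Lambda_1}$, \refL{Lem triebel} and maximal regularity (Dore--Venni), and statements~4--7 deduced formally (your treatment of 6(b) through $\Lambda_1^{m-1}e^{(\cdot-a)\Lambda_1}\B\psi=(\Lambda_1^q\B)\Lambda_1^{m-1-q}e^{(\cdot-a)\Lambda_1}\psi$ is a clean equivalent of the paper's reiteration argument, and your reflection argument for statement~7 is the paper's). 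The genuine divergence is the necessity half of statement~3, precisely the point you flag as delicate. The paper disposes of it in one line: since $u_\psi(a)=0$, statement~1 gives $u'_\psi(a)=\B\psi$, and Grisvard's trace result (\refL{Lem trace}) applied to $u_\psi\in W^{m,p}(a,b;X)\cap L^p\left(a,b;D(\Lambda_1^m)\right)$ yields directly $\B\psi\in\left(D(\Lambda_1^m),X\right)_{\frac{1}{mp}+\frac{1}{m},p}$, which the reiteration identities of \refR{Rem Réitération} identify with $\left(D(\Lambda_1^{m-1}),X\right)_{\frac{1}{(m-1)p},p}$. Your alternative --- Taylor-expanding $e^{(x-a)\B}-I$, controlling the intermediate terms by an induction on $m$, isolating the leading singular term $(x-a)\Lambda_1^m e^{(x-a)\Lambda_1}\B\psi$, and recovering $\Lambda_1^{m-1}e^{(\cdot-a)\Lambda_1}\B\psi\in L^p$ by the dual Hardy inequality --- does appear to work (the Hardy step is legitimate since $h(x)=h(b)-\int_x^b H(t)\,\frac{dt}{t-a}$ with $H\in L^p$), but it is considerably heavier and every step of the induction needs the $\B$-invariance of the interpolation classes and a rescaled application of \refL{Lem triebel}; the trace theorem makes all of this unnecessary. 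Conversely, your argument is more self-contained (no trace theorem needed), and your proof of statement~2 by the elementary bounds $\|\Lambda_1 e^{(x-a)\Lambda_1}\|_{\L(X)}\leqslant C(x-a)^{-1}$ and $\|e^{(x-a)\B}-I\|_{\L(X)}\leqslant C(x-a)$, giving even $\Lambda_1 u_\psi\in L^\infty(a,b;X)$, is simpler than the paper's appeal to Dore--Venni there.
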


\begin{proof}\hfill

\begin{enumerate}
\item Note that for $x>a$, we have $e^{\left( x-a\right) \Lambda_1}\psi,~ e^{\left( x-a\right) \Lambda_2}\psi \in D\left( \Lambda_1^{\infty }\right)=D\left(\Lambda_2^\infty\right)$.

Moreover, $u_{\psi }\in C^{\infty }\left( (a,b];X\right) \cap C^{0}\left(
[a,b];X\right) $. Then, for $\ell \geqslant 1$ and $x\in (a,b]$, it follows
\begin{eqnarray*}
u_{\psi }^{\left( \ell \right) }\left( x\right) &=&\Lambda_2^{\ell }e^{\left(
x-a\right) \Lambda_2}-\Lambda_1^{\ell }e^{\left( x-a\right) \Lambda_1}\psi \\
&=&\Lambda_1^{\ell }\left( e^{\left( x-a\right) \Lambda_2}-e^{\left( x-a\right) \Lambda_1}\right)
\psi +\left( \Lambda_2^{\ell }-\Lambda_1^{\ell }\right) e^{\left( x-a\right) \Lambda_2}\psi \\
&=&\Lambda_1^{\ell }u_{\psi }\left( x\right) +\left( \sum\limits_{k=1}^{\ell
}\Lambda_2^{k-1}\Lambda_1^{\ell -k}\right) \left( \Lambda_2-\Lambda_1\right) e^{\left( x-a\right) \Lambda_2}\psi \\
&=&\Lambda_1^{\ell }u_{\psi }\left( x\right) +\left( \sum\limits_{k=1}^{\ell
}\Lambda_2^{k-1}\Lambda_1^{\ell -k}\right) e^{\left( x-a\right) \Lambda_2}\B\psi \\
&=&\Lambda_1^{\ell }u_{\psi }\left( x\right) +\left( \sum\limits_{k=1}^{\ell
}\Lambda_2^{k-1}\Lambda_1^{-\left( k-1\right) }\right) \Lambda_1^{\ell -1}e^{(x-a)\Lambda_2} \B \psi.
\end{eqnarray*}

\item From the previous statement, $u_{\psi }$ is a solution $C^{1}\left( (a,b];X\right) \cap C^{0}\left( [a,b];X\right) $ of the Cauchy problem
\begin{equation*}
\left\{ 
\begin{array}{llll}
u^{\prime }\left( x\right) &=& \Lambda_1 u\left( x\right) +e^{\left( x-a\right) \Lambda_2} \B \psi, & x\in (a,b] \\ 
u(a) &=& 0.
\end{array}%
\right.
\end{equation*}

Thus, for $x\in (a,b]$, $u$ is given by the variation of constant formula
\begin{equation}\label{u phi = gen}
u_{\psi }\left( x\right) = \int_{a}^{x}e^{\left( x-s\right) \Lambda_1}e^{\left( s-a\right) \Lambda_2} \B \psi ~ds = \Lambda_1^{-1}\left[\Lambda_1 \int_{a}^{x}e^{\left( x-s\right) \Lambda_1}e^{\left(s-a\right) \Lambda_2} \B \psi ~ds\right].
\end{equation}
Moreover, we have 
\begin{equation*}
s\mapsto e^{\left( s-a\right) \Lambda_2} \B \psi \in C^{0}\left( [a,b];X\right) \subset L^{p}\left( a,b;X\right),
\end{equation*}
and from \cite{dore-venni}, Theorem 3.2, p. 196, it follows
\begin{equation*}
g_{1,\psi }:x\mapsto \Lambda_1 \int_{a}^{x}e^{\left( x-s\right) \Lambda_1}e^{\left(s-a\right) \Lambda_2} \B \psi ~ds\in L^{p}\left( a,b;X\right).
\end{equation*}
We deduce that $u_{\psi }=\Lambda_1^{-1}g_{1,\psi }\in L^{p}\left(a,b;D(\Lambda_1)\right)$ and then
\begin{equation*}
u^{\prime }=\Lambda_1 u+e^{\left( \cdot -a\right) \Lambda_2} \B \psi \in L^{p}\left( a,b;X\right).
\end{equation*}

\item Assume that $\B \psi \in \left( D\left( \Lambda_1^{m-1}\right) ,X\right) _{\frac{1}{\left( m-1\right) p},p}$. From \eqref{u phi = gen}, we have
\begin{equation*}
u_{\psi }\left( x\right) = \Lambda_1^{-m}\left[ \Lambda_1 \int_{a}^{x}e^{\left(x-s\right) \Lambda_1}\Lambda_1^{m-1}e^{\left( s-a\right) \Lambda_2} \B \psi ~ds\right] , \quad x\in (a,b].
\end{equation*}

Since $\B \psi \in \left( D\left( \Lambda_1^{m-1}\right) ,X\right) _{\frac{1}{\left( m-1\right) p},p}$, we obtain
\begin{equation*}
s\mapsto \Lambda_1^{m-1}e^{\left( s-a\right) \Lambda_1} \B \psi \in L^{p}\left( a,b;X\right).
\end{equation*}
From \cite{triebel}, Theorem, p. 96, we deduce that
\begin{equation*}
s\mapsto \Lambda_1^{m-1}e^{\left( s-a\right) \Lambda_2} \B \psi =e^{\left( s-a\right)\B}\Lambda_1^{m-1}e^{\left( s-a\right) \Lambda_1} \B \psi \in L^{p}\left( a,b;X\right),
\end{equation*}
and from \cite{dore-venni}, Theorem 3.2, p. 196, we have
\begin{equation*}
g_{m,\psi }:x\mapsto \Lambda_1\int_{a}^{x}e^{\left( x-s\right)\Lambda_1}\Lambda_1^{m-1}e^{\left( s-a\right) \Lambda_2} \B \psi~ds \in L^p(a,b;X).
\end{equation*}
It follows that
$$u_{\psi }=\Lambda_1^{-m}g_{m,\psi }\in L^{p}\left(a,b;D(\Lambda_1^{m})\right).$$

Moreover, $u_{\psi }\in W^{m,p}\left( a,b;X\right) $ since $u_{\psi }\in C^{1}\left( (a,b];X\right) \cap C^{0}\left( [a,b];X\right) $ and from statement~1., for $\ell \in \left\{ 1,...,m\right\} $ and $x\in (a,b]$, we obtain
\begin{equation*}
u_{\psi }^{\left( \ell \right) }\left( x\right) = \Lambda_1^{\ell }u_{\psi }\left(x\right) + T_{\ell }\Lambda_1^{\ell -1}e^{\left( x-a\right) \Lambda_2} \B \psi.
\end{equation*}
Then $u_{\psi }^{\left( \ell \right) }\in L^{p}\left( a,b;X\right) $.

Conversely, if $u_{\psi }\in W^{m,p}\left( a,b;X\right)
\cap L^{p}\left( a,b;D(\Lambda_1^{m})\right) $ then, from \refL{Lem trace}, we have
\begin{eqnarray*}
\B \psi =u'_\psi (a) \in \left( D\left( \Lambda_1^{m}\right) ,X\right) _{\frac{1}{mp}+\frac{1}{m},p}
&=&\left( X,D\left( \Lambda_1^{m}\right) \right) _{1-\frac{1}{mp}-\frac{1}{m},p} \\
&=&\left( X,D\left( \Lambda_1\right) \right) _{m-\frac{1}{p}-1,p} \\
&=&\left( D\left( \Lambda_1^{m-1}\right) ,X\right) _{\frac{1}{\left( m-1\right) p},p}.
\end{eqnarray*}

\item Note first that, from statement 3., we have $\B \psi \in \left( D\left(\Lambda_1^{m-1}\right) ,X\right) _{\frac{1}{\left( m-1\right) p},p}$, then 
\begin{equation*}
\Lambda_1^{m-1}e^{\left( \cdot -a\right) \Lambda_1}\Lambda_2 \psi \in L^{p}\left( a,b;X\right) .
\end{equation*}
Let $\ell \in \left\{ 1,...,m-1\right\} $. For $x\in (a,b]$, we have
\begin{equation*}
u_{\psi }^{\left( \ell \right) }\left( x\right) = \Lambda_1^{\ell }u_{\psi }\left(
x\right) + T_{\ell }\Lambda_1^{\ell -1}e^{\left( x-a\right) \Lambda_2}\B\psi.
\end{equation*}
It follows that:
\begin{enumerate}
\item $u_{\psi }^{\left( \ell \right) }\in L^{p}(a,b;D(\Lambda_1^{m-\ell }))$ since
\begin{equation*}
\Lambda_1^{m-\ell }u_{\psi }^{\left( \ell \right) }=\Lambda_1^{m}u_{\psi }\left( x\right) + T_{\ell }\Lambda_1^{m-1} e^{\left( x-a\right) \Lambda_2} \B \psi \in L^{p}\left( a,b;X\right).
\end{equation*}

\item $u_{\psi }^{\left( \ell \right) }\in W^{m-\ell ,p}\left( a,b;X\right)$ since, for $k\in \left\{ 1,...,m-\ell \right\}$, we have $\left( u_{\psi }^{\left( \ell \right) }\right) ^{\left( k\right) }\in
L^{p}\left( a,b;X\right)$ and 
\begin{eqnarray*}
\left( u_{\psi }^{\left( \ell \right) }\right) ^{\left( k\right) } &=& u_{\psi }^{\left( \ell +k\right) } \\ 
&=&\Lambda_1^{\ell +k}u_{\psi }+T_{\ell +k}\Lambda_1^{\ell +k-1}e^{\left( \cdot -a\right)\Lambda_2}\B \psi , \\
&=&\Lambda_1^{\ell +k}u_{\psi }+T_{\ell +k}\Lambda_1^{\ell +k-m}\Lambda_1^{m-1}e^{\left( \cdot -a\right) \Lambda_2} \B \psi \in L^{p}\left( a,b;X\right).
\end{eqnarray*}
Since $\Lambda_1^{\ell +k}u_{\psi }\in L^{p}\left( a,b;X\right)$ and $\ell
+k\leqslant m$, we obtain $T_{\ell +k}\Lambda_1^{\ell +k-m}\in \mathcal{L}\left( X\right)$ and
\begin{equation*}
T_{\ell +k}\Lambda_1^{\ell +k-m}\Lambda_1^{m-1}e^{\left( \cdot -a\right) \Lambda_2}\B\psi \in L^{p}\left( a,b;X\right).
\end{equation*}
\end{enumerate}

\item If $\psi \in X$, then $\B\psi \in \left( D\left(\Lambda_1^{q-1}\right) ,X\right) _{\frac{1}{\left( q-1\right) p},p}$ and from statement 3., we obtain the result.

\item For statement a) it suffice to note that if $\psi \in X$, then
\begin{equation*}
\B\psi \in D\left( \Lambda_1^{q}\right) \subset D\left( \Lambda_1^{q-1}\right) \quad \text{and} \quad \Lambda_1^{q-1}\B\psi = D\left( \Lambda_1\right) \subset \left( D\left( \Lambda_1\right) ,X\right) _{\frac{1}{p},p}.
\end{equation*}
From statement 3., $u_{\psi }\in W^{q+1,p}\left( a,b;X\right) \cap L^{p}\left(a,b;D(\Lambda_1^{q+1})\right)$.

For statement b), if $\psi \in \left( D\left( \Lambda_1^{m-q-1}\right),X\right) _{\frac{1}{\left( m-q-1\right) p},p}$ then, from the reiteration property described in \refR{Rem Réitération}, we have
\begin{equation*}
\psi \in D\left( \Lambda_1^{m-q-2}\right) \quad \text{and} \quad \Lambda_1^{m-q-2}\psi \in \left(D\left( \Lambda_1\right) ,X\right) _{\frac{1}{p},p}.
\end{equation*}
It follows that
$$\B\psi = \Lambda_1^{q}\B\Lambda_1^{-q}\psi \in D\left( \Lambda_1^{m-2}\right),$$
and
$$\Lambda_1^{m-2}\B\psi = \Lambda_1^{m-2}\Lambda_1^{q}\B\Lambda_1^{-q}\psi = \Lambda_1^{q}\B\Lambda_1^{m-q-2}\psi \in \left(D\left( \Lambda_1\right) ,X\right) _{\frac{1}{p},p}.$$
Then, from statement 3., we obtain that $u_{\psi }\in W^{m,p}\left( a,b;X\right) \cap L^{p}\left(a,b;D(\Lambda_1^{m})\right)$.

Conversely, if $u_{\psi }\in W^{m,p}\left( a,b;X\right)\cap L^{p}\left( a,b;D(\Lambda_1^{m})\right)$, then, from statement 3., we have $\B \psi \in \left( D\left( \Lambda_1^{m-1}\right) ,X\right) _{\frac{1}{\left( m-1\right) p},p}$. Hence, from \refR{Rem Réitération}, we deduce that
\begin{equation*}
\B \psi \in D\left(\Lambda_1^{m-2}\right) \quad \text{and} \quad \Lambda_1^{m-2}\B\psi \in \left( D\left(\Lambda_1\right) ,X\right) _{\frac{1}{p},p}.
\end{equation*}
Then, there exists $\chi \in \left( D\left( \Lambda_1\right) ,X\right) _{\frac{1}{p},p}$ such that $\B\psi = \Lambda_1^{-m+2}\chi $, it follows
$$\psi =\left( \Lambda_1^{q}\B\right) ^{-1}\Lambda_1^{q}\B\psi =\left( \Lambda_1^{q}\B\right)^{-1}\Lambda_1^{-m+q+2}\chi \in D\left( \Lambda_1^{m-q-2}\right),$$
and
$$\Lambda_1^{m-q-2}\psi =\left( \Lambda_1^{q}\B\right) ^{-1}\chi \in \left( D\left(\Lambda_1\right),X\right) _{\frac{1}{p},p},$$
which gives $\psi \in \left( D\left( \Lambda_1^{m-q-1}\right) ,X\right)_{\frac{1}{\left( m-q-1\right) p},p}$.

\item It suffice to write $v_\psi (x) = u_\psi \left(b+a-x\right)$. Then, $v_\psi$ satisfies the properties of $u_\psi$.
\end{enumerate}
\end{proof}

\begin{Cor}\label{Cor diff exp}
Assume that $(H_1)$, $(H_2)$, $(H_3)$, $(H_4)$ and $(H_5)$ are satisfied. Let $\psi \in X$. For $x \in (a,b) \subset \RR$, we set
\begin{equation*}
u_{\psi }\left( x\right) =\left( e^{(x-a) L}-e^{(x-a) M}\right) \psi \quad \text{and} \quad v_\psi (x) = \left( e^{(b-x) L} - e^{(b-x) M}\right) \psi.
\end{equation*}
Let $m\geqslant 3$, then 

\noindent1. $\forall~\psi \in X:u_{\psi }\in W^{2,p}\left( a,b;X\right)
\cap L^{p}\left( a,b;D(M^{2})\right)$. 

\noindent2. $u_{\psi }\in W^{m,p}\left( a,b;X\right) \cap L^{p}\left(
a,b;D(M^{m})\right) \Longleftrightarrow \psi \in \left( D\left(
M^{m-2}\right) ,X\right) _{\frac{1}{\left( m-2\right) p},p}$.
 
In this case, for all $\ell \in \left\{ 1,...,m-1\right\}$
\begin{equation*}
u_{\psi }^{\left( \ell \right) }\in W^{m-\ell ,p}\left( a,b;X\right) \cap
L^{p}\left( a,b;D(M^{m-\ell })\right).
\end{equation*}

In particular,
\begin{equation*}
u_{\psi }\in W^{4,p}\left( a,b;X\right) \cap L^{p}\left( a,b;D(M^{4})\right) \Longleftrightarrow \psi \in \left( D\left( M\right) ,X\right) _{1+\frac{1}{p},p},
\end{equation*}

and in this case $u_{\psi }^{\prime \prime }\in L^{p}\left(a,b;D(M^{2})\right)$.

\noindent3. The previous statement holds true if we replace $u_\psi$ by $v_\psi$.
\end{Cor}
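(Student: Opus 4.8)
The plan is to read \refC{Cor diff exp} as the instance $\Lambda_1 = M$, $\Lambda_2 = L$, $\B = L-M$ of \refT{Reg Diff exp}, and then to handle the one feature that prevents it from being a literal corollary: the operator $L-M$ realises the borderline smoothing order ``$q=1$'', whereas statements 5 and 6 of \refT{Reg Diff exp} are stated only for $q\geqslant 2$. First I would check that the triple $(M,L,L-M)$ fits the standing framework of \refT{Reg Diff exp}. By \refR{Rq1}, $-M\in \mathrm{BIP}(X,\theta_0/2)$ with $\theta_0/2<\pi/2$, so $M$ satisfies $(\mathcal{MR})$ and $0\in\rho(M)$. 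By \eqref{(L-M)y=}, $L-M$ extends to the bounded operator $\B:=B(L+M)^{-1}\in\L(X)$, which commutes with $M$ because $B$ resolvent-commutes with $P$ — hence with $M=-\sqrt{-P}$ and with $(L+M)^{-1}$ — by \eqref{commut P et T} and \eqref{commut}; finally $\Lambda_2=M+\B=L$ on $D(M)=D(L)$.

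The key structural point is that this $\B$ smooths by \emph{exactly} one order. Since $D(L+M)=D(M)$ (using $D(L)=D(M)$ and the invertibility of $L+M$ from \refR{Rq1}) and $B$ preserves $D(M)$, one gets $\B(X)=B(L+M)^{-1}(X)\subset D(M)$. Moreover $R:=M\B=BM(L+M)^{-1}=B\bigl(I-L(L+M)^{-1}\bigr)\in\L(X)$, because $M(L+M)^{-1}$ and $L(L+M)^{-1}$ are bounded by the closed graph theorem; and $R$ is boundedly invertible, with $R^{-1}=(L+M)B^{-1}M^{-1}=B^{-1}\bigl(I+LM^{-1}\bigr)\in\L(X)$ (here $LM^{-1}\in\L(X)$, again by closed graph). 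Thus $R=M\B$ is a bounded, boundedly invertible operator commuting with $M$; this is precisely the ``$q=1$'' version of the hypotheses (a)--(b) of statement 6 in \refT{Reg Diff exp}.

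Statement 1 is then immediate from statement 5 of \refT{Reg Diff exp} with $q=2$: its hypothesis $\B(X)\subset(D(M),X)_{1/p,p}$ holds because $\B(X)\subset D(M)\subset(D(M),X)_{1/p,p}$, yielding $u_\psi\in W^{2,p}(a,b;X)\cap L^p(a,b;D(M^2))$ for every $\psi$. For statement 2 I would reproduce the argument of statement 6(b) with $q=1$, which uses only statement 3 of \refT{Reg Diff exp} (valid for all $m\geqslant 2$, with no constraint on $q$): by it, $u_\psi\in W^{m,p}(a,b;X)\cap L^p(a,b;D(M^m))$ iff $\B\psi\in(D(M^{m-1}),X)_{\frac{1}{(m-1)p},p}$. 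Using the reiteration identities of \refR{Rem Réitération} (legitimate since $\frac1p<1$), the latter unfolds to ``$\B\psi\in D(M^{m-2})$ and $M^{m-2}\B\psi\in(D(M),X)_{1/p,p}$''; writing $\B=RM^{-1}$ and using that $R$ is an isomorphism of each $D(M^k)$ and of $(D(M),X)_{1/p,p}$, this peels off to ``$\psi\in D(M^{m-3})$ and $M^{m-3}\psi\in(D(M),X)_{1/p,p}$'', i.e. $\psi\in(D(M^{m-2}),X)_{\frac{1}{(m-2)p},p}$, read in both directions. The intermediate regularity of $u_\psi^{(\ell)}$ comes from statement 4, the case $m=4$ from rewriting $(D(M^2),X)_{\frac{1}{2p},p}=(D(M),X)_{1+\frac1p,p}$ via \refR{Rem Réitération} (with $u_\psi''\in L^p(a,b;D(M^2))$ being statement 4 for $\ell=2$), and statement 3 for $v_\psi$ from statement 7 of \refT{Reg Diff exp} (equivalently $v_\psi(x)=u_\psi(a+b-x)$).

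The main obstacle is statement 2: because $L-M$ sits at the excluded order $q=1$, \refT{Reg Diff exp} cannot be quoted as a black box, and the equivalence must be re-derived through statement 3 together with the two properties of $R=M\B$. Establishing these — especially that $R^{-1}=(L+M)B^{-1}M^{-1}$ is bounded, which rests on $LM^{-1},ML^{-1}\in\L(X)$ and on the commutations \eqref{commut}, \eqref{commut P et T} — is the only genuinely new work beyond \refT{Reg Diff exp}.
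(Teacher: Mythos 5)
Your proposal is correct, and its skeleton is the same as the paper's: the paper also sets $\B = B(L+M)^{-1}$, records exactly your two facts --- $\B(X)\subset D(L+M)=D(M)$, hence $M\B\in\L(X)$, and $0\in\rho(M\B)$ with $(M\B)^{-1}=(L+M)M^{-1}B^{-1}$ --- and then concludes by plugging $\Lambda_1=M$, $\Lambda_2=L$, $\B=B(L+M)^{-1}$ into statements 6 and 7 of \refT{Reg Diff exp}. The genuine difference is in how that conclusion is reached. The paper invokes statement 6 directly with $q=1$, which is formally outside the range for which the theorem is stated (the preamble requires $m,q\in\NN\setminus\{0,1\}$, and 6(b) repeats ``$q\in(1,+\infty)$''); this works only because the proof of statement 6 goes through verbatim at $q=1$. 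You spotted precisely this borderline and repaired it: statement 1 of the corollary is obtained from statement 5 with $q=2$ (legitimate, since $\B(X)\subset D(M)\subset (D(M),X)_{\frac{1}{p},p}$), and statement 2 is re-derived from statement 3 (which carries no restriction involving $q$) together with the reiteration identities of \refR{Rem Réitération} and the fact that $R=M\B$ is a bounded isomorphism commuting with $M$, hence an isomorphism of each $D(M^k)$ and of $(D(M),X)_{\frac{1}{p},p}$ --- exactly the peeling argument used in the paper's own proof of 6(b), transplanted to $q=1$. Your write-up also makes explicit two hypotheses of \refT{Reg Diff exp} that the paper's proof leaves implicit: that $\B$ is a bounded extension of $L-M$ commuting with $M$ (via \eqref{(L-M)y=}, \eqref{commut P et T} and \eqref{commut}), and that $M$ satisfies $(\mathcal{MR})$ with $0\in\rho(M)$. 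What your route buys is that \refC{Cor diff exp} becomes a consequence of \refT{Reg Diff exp} \emph{as stated}, not of its proof; what the paper's shortcut buys is brevity.
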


\begin{proof}
We set $\B=B\left( L+M\right) ^{-1}$. Moreover, $\B$ satisfies 
\begin{equation*}
\left\{\begin{array}{l}
1)~\B(X) \subset D(L+M)=D\left( M\right) \text{ then }M\B \in \mathcal{L}\left( X\right) . \\ 
2)~0\in \rho \left( M\B\right) \text{ with }\left( M\B\right) ^{-1}=\left(L+M\right) M^{-1}B^{-1}.
\end{array}\right.
\end{equation*}
From \eqref{(L-M)y=}, we have $L=M+B\left( L+M\right)^{-1}$. Then, setting $\Lambda_2=L$, $\Lambda_1=M$ and $\B = B\left( L+M\right)^{-1}$ in statement $6.$ of \refT{Reg Diff exp}, we obtain statements $1.$ and $2.$ Moreover, from statement $7.$ of \refT{Reg Diff exp}, we obtain statement~$3.$
\end{proof}

\subsection{Proof of \refT{thmprinc gen}} \label{sect th pr gen}

We first give a useful remark concerning the regularity.
\begin{Rem}\label{Rem trace}
From \refL{Lem trace}, if $u \in W^{4,p}(a,b;X) \cap L^p(a,b;D(M^4))$ then, for $s \in [a,b]$, we obtain
$$
u(s)\in (D(M^4),X)_{\frac{1}{4p},p},\quad u'(s)\in (D(M^4),X)_{\frac{1}{4}+\frac{1}{4p},p}\quad \text{and}\quad u''(s)\in (D(M^4),X)_{\frac{1}{2}+\frac{1}{4p},p}.
$$
Moreover, from \refR{Rem Réitération}, for $s \in [a,b]$, we deduce that
$$
u(s)\in (D(M),X)_{3+\frac{1}{p},p},\quad u'(s)\in (D(M),X)_{2+\frac{1}{p},p}\quad \text{and}\quad u''(s)\in (D(M),X)_{1+\frac{1}{p},p}.
$$
In the same way, we obtain the following equalities:
\begin{equation}\label{egalités espaces interpol P}
\left\{\begin{array}{l}
(D(M),X)_{3+\frac{1}{p},p} = (D(P),X)_{1+\frac{1}{2p},p}, \\ \ecart
(D(M),X)_{2+\frac{1}{p},p} = (D(P),X)_{1 + \frac{1}{2} + \frac{1}{2p},p}, \\ \ecart
(D(M),X)_{1+\frac{1}{p},p} = (D(P),X)_{\frac{1}{2p},p}.
\end{array}\right.
\end{equation}
\end{Rem}

Thus, we only have to prove the converse implications in \refT{thmprinc gen}. 

\subsubsection{Proof of 1. of \refT{thmprinc gen} (Boundary Conditions (BC2))}

Assume that $(H_1)$, $(H_2)$, $(H_3)$, $(H_4)$, $(H_5)$ and \eqref{Reg Data 2} hold. 

If $u$ is a classical solution of \eqref{eq de base}-\eqref{condbord2}, then from \refP{Proposition 1}, $u$ satisfies \eqref{Repre u}. We set
\begin{equation}\label{al gen}
\alpha_1 = \frac{K_1-K_2}{2},\quad \alpha_2 = \frac{K_3-K_4}{2},\quad \alpha_3 =\frac{K_1+K_2}{2}\quad\text{and}\quad \alpha_4 = \frac{K_3+K_4}{2}.
\end{equation}
Then, for $a.\,e.~x \in (a,b)$, $u$ is given by
\begin{equation}\label{Repre u bis}
\begin{array}{rcl}
u(x)&=&\dis~~ \,\left(e^{(x-a) M}-e^{(b-x) M}\right)\alpha _{1}+\left(e^{(x-a) L}-e^{(b-x) L}\right)\alpha_{2}
\\\ecart
&&\dis+\left(e^{(x-a) M}+e^{(b-x) M}\right)\alpha_{3}+\left(e^{(x-a) L}+e^{(b-x) L}\right)\alpha_4 + F_{0 ,f}\left( x\right).
\end{array}
\end{equation}
Following the same steps as those used in the proof of Theorem 2.5, p. 365 in \cite{LMMT} (where we replace $k I$ by $B$), we obtain
\begin{equation} \label{ALPHA 1 2 3 4}
\left\{ \begin{array}{rcl}
\alpha_1 & = & \left(I + e^{c M}\right)^{-1} \left( M^{-1} \tilde{\varphi_1} - \left(I + e^{cL}\right) LM^{-1} \alpha_2 \right) \\ \ecart 
\alpha_2 & = & \dis \left(I - e^{c L}\right)^{-1} B^{-1} \left(\frac{\varphi_3 - \varphi_4}{2}\right) \\ \ecart 
\alpha_3 & = & \left(I - e^{c M}\right)^{-1} \left(M^{-1} \tilde{\varphi_2} - \left(I - e^{cL}\right) LM^{-1} \alpha_4 \right) \\ \ecart
\alpha_4 & = & \dis \left(I + e^{c L}\right)^{-1} B^{-1} \left(\frac{\varphi_3 + \varphi_4}{2}\right),
\end{array}\right.  
\end{equation}
where
\begin{equation}\label{PHI 1 2 tilde}
\tilde{\varphi_1}\; := \; \frac{\varphi_1 + \varphi_2 - F'_{0,f}(a) - F'_{0,f}(b)}{2} \quad \text{and} \quad \tilde{\varphi_2} \; := \; \frac{\varphi_1 - \varphi_2 - F'_{0,f}(a) + F'_{0,f}(b)}{2}.
\end{equation}
Now, thanks to \refL{Lem triebel}, \refL{Lem reg}, \refC{Cor diff exp} and using again the same method as in \cite{LMMT}, we obtain that $u$ is the unique classical solution of \eqref{eq de base}-\eqref{condbord2}.

\subsubsection{Proof of 2. of \refT{thmprinc gen} (Boundary Conditions (BC3))}

Assume that $(H_1)$, $(H_2)$, $(H_3)$, $(H_4)$, $(H_5)$, $(H_6)$ and \eqref{Reg Data 3} hold. 

As previously, following the same steps as those used in the proof of Theorem 2.5, p. 365 in \cite{LMMT} (where we replace $k I$ by $B$) and using \refL{Lem triebel}, \refL{Lem reg} and \refC{Cor diff exp}, we deduce that $u$ is the unique classical solution of \eqref{eq de base}-\eqref{condbord3}, given by \eqref{Repre u} where  
\begin{equation}\label{ali gen}
\left\{ \begin{array}{rrc}
\dis \alpha_1& =&\dis\frac{1}{2}B^{-1}(L+M)U^{-1}\left[L(I+e^{cL})(\varphi_1-\varphi_2)-2(I-e^{cL})\tilde{\varphi}_1\right]\\\ecart
\dis \alpha_2& =&\dis-\frac{1}{2}B^{-1}(L+M)U^{-1}\left[M(I+e^{cM})(\varphi_1-\varphi_2)-2(I-e^{cM})\tilde{\varphi}_1\right]\\\ecart
\alpha_3& =&\dis\frac{1}{2}B^{-1}(L+M)V^{-1}\left[L(I-e^{cL})(\varphi_1+\varphi_2)-2(I+e^{cL})\tilde{\varphi}_2\right]\\\ecart
\alpha_4& =&\dis-\frac{1}{2}B^{-1}(L+M)V^{-1}\left[M(I-e^{cM})(\varphi_1+\varphi_2)-2(I+e^{cM})\tilde{\varphi}_2\right],
\end{array}\right.
\end{equation}
with \begin{equation}\label{tildephi gen}
\tilde{\varphi}_1\;:=\;\frac{\varphi_3+\varphi_4-F_{0,f}'(a)-F_{0,f}'(b)}{2}\quad\text{and}\quad \tilde{\varphi}_2\;:=\;\frac{\varphi_3-\varphi_4-F_{0,f}'(a)+F_{0,f}'(b)}{2}.
\end{equation}

\subsubsection{Proof of 3. of \refT{thmprinc gen} (Boundary Conditions (BC4))}

Assume that $(H_1)$, $(H_2)$, $(H_3)$, $(H_4)$, $(H_5)$, $(H_6)$ and \eqref{Reg Data 4} hold. We proceed as in the previous proof. 

Following the same steps as those used in the proof of Theorem 2.5, p. 365 in \cite{LMMT} (where we replace $k I$ by $B$) and using \refL{Lem triebel}, \refL{Lem reg} and \refC{Cor diff exp}, we deduce that $u$ is the unique classical solution of \eqref{eq de base}-\eqref{condbord4}, given by \eqref{Repre u} with  
\begin{equation}\label{ali2 gen}
\left\{\begin{array}{rcl}
\alpha_1 & =&\dis\frac{1}{2} B^{-1}(L+M)V^{-1}\left[2(I-e^{c L})LM^{-1}\tilde{\varphi}_1-(I+e^{c L})M^{-1}(\varphi_3-\varphi_4)\right]\\\ecart
\alpha_2 & =&\dis-\frac{1}{2} B^{-1}(L+M)V^{-1}\left[2(I-e^{c M})ML^{-1}\tilde{\varphi}_1- (I+e^{c M}) L^{-1}(\varphi_3-\varphi_4)\right]\\\ecart
\alpha_3 & =&\dis\frac{1}{2} B^{-1}(L+M)U^{-1}\left[2(I+e^{c L})LM^{-1}\tilde{\varphi}_2-(I-e^{c L})M^{-1}(\varphi_3+\varphi_4)\right]\\\ecart
\alpha_4 & =&\dis-\frac{1}{2} B^{-1}(L+M)U^{-1}\left[2(I+e^{c M})ML^{-1}\tilde{\varphi}_2 -(I-e^{c M})L^{-1}(\varphi_3+\varphi_4)\right],
\end{array}\right.
\end{equation}
where $\tilde{\varphi_1}$ and $\tilde{\varphi_2}$ are given by \eqref{PHI 1 2 tilde}.

\section{Back to the parabolic problem}\label{sect linear parabolic case}

Let $X$ be a complex Banach space and $\A_i$, $i=1,2,3,4,5$, the linear operator defined by
\begin{equation}\label{Ai}
\left\{\begin{array}{ccl}
D(\A_i) & = & \{u \in W^{4,p}(a,b;X)\cap L^p(a,b;D(A^2))\text{ and } u'' \in L^p(a,b;D(A)) : \text{(BCi)}_0\} \\ \ecart
\left[\A_i u\right] (x) & = & -u^{(4)}(x) - (2A - kI) u''(x) - (A^2 - kA) u(x), \quad u \in D(\A_i), ~x\in (a,b),
\end{array}\right.
\end{equation}
where $k \in \RR$ and (BCi)$_0$ represents the boundary conditions (BCi), $i=1,2,3,4,5$, with $\varphi_j = 0$, $j=1,2,3,4$ and wherein $A_0$ is replaced by a more general closed linear operator $A$ satisfying the assumptions below. 

Then, we will study the spectral properties of $\A_i$ in order to solve the Cauchy problem \eqref{P parabo}, where $f$ and $v_0$ are in appropriate spaces. 

\subsection{Assumptions and main results}

\subsubsection{Assumptions}

Let $A$ be a closed linear operator and assume
\begin{center}
\begin{tabular}{l}
$(\H_1)\quad X$ is a UMD space, \\ \ecart
$(\H_2)\quad 0 \in \rho(A)$, \\ \ecart
$(\H_3)\quad -A \in$ BIP$(X,\theta_A),~$ for $\theta_A \in [0, \pi/2)$,\\ \ecart
$(\H_4)\quad [k,+\infty) \in \rho(A)$.
\end{tabular}
\end{center}
In some results, for the boundary conditions \eqref{condbord3} and \eqref{condbord4}, we may need a supplementary hypothesis:
$$(\H_5)\quad -A \in \text{Sect}(0).$$

\subsubsection{Main results}

\begin{Prop}\label{Prop Sect}
Assume that $(\H_1)$, $(\H_2)$, $(\H_3)$ and $(\H_4)$ hold. Then
\begin{enumerate}
\item for $i=1,2,5$, we have
$$-\A_i + \frac{k^2}{4} I \in \text{Sect}\,(2\theta_A),$$

\item for $i=3,4$, we have
$$\left\{\begin{array}{ll}
\dis -\A_i + \frac{k^2}{4} I + r I \in \text{Sect}\,\left(\frac{\pi}{2}\right), & \text{if } \dis 2\theta_A \in \left[0, \frac{\pi}{2}\right) \\ \ecart
\dis -\A_i + \frac{k^2}{4} I + r I \in \text{Sect}\,\left(2\theta_A\right), & \text{if } \dis 2\theta_A \in \left[\frac{\pi}{2}, \pi\right),
\end{array}\right.$$
where $r > 0$ is defined in \refP{Prop rho(-Ai)2}. 

Moreover, there exist $r' > r$ and $\theta_0 > 0$, such that
$$\left\{\begin{array}{ll}
\dis -\A_i + \frac{k^2}{4} I + r' I \in \text{Sect}\,\left(2\theta_A\right), & \text{if } \dis 2\theta_A \in \left(0, \frac{\pi}{2}\right) \\ \ecart

\dis -\A_i + \frac{k^2}{4} I + r' I \in \text{Sect}\,\left(\theta_0\right), & \text{if } \dis 2\theta_A = 0.
\end{array}\right.$$
\end{enumerate}
\end{Prop}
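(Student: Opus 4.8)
The plan is to reduce the sectoriality of $-\A_i + \frac{k^2}{4}I$ to the fourth-order theory of \refs{sect linear steady case} by completing the square. Since $A^2 - kA + \frac{k^2}{4}I = (A - \frac{k}{2}I)^2 =: \tilde A^2$ and $2A - kI = 2\tilde A$, the resolvent equation $\big(\mu I - (-\A_i + \frac{k^2}{4}I)\big)u = g$ becomes, up to sign,
\[
u^{(4)} + 2\tilde A\,u'' + (\tilde A^2 - \mu I)\,u = -g, \qquad \text{(BCi)}_0 .
\]
I would then factor $\tilde A^2 - \mu I = PQ$ with $P := \tilde A + \sqrt\mu\,I$ and $Q := \tilde A - \sqrt\mu\,I$ (principal root), so that $P + Q = 2A - kI$, $PQ = \tilde A^2 - \mu I$ and $B := P - Q = 2\sqrt\mu\,I$. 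This is exactly the abstract equation \eqref{eq de base}, and the decisive feature is that the spectral parameter $\mu$ enters only through the scalar $\sqrt\mu$.

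Next I would verify, for $\mu \notin \overline{S_{\alpha'}}$ with $\alpha' > 2\theta_A$, that $P$ and $Q$ satisfy $(H_1)$–$(H_5)$, together with $(H_6)$ in the cases $i=3,4$. Here $(H_1)$ is $(\H_1)$; $(H_3)$ is automatic, as $P$ and $Q$ are scalar translates of $A$; and $(H_5)$ holds with the invertible $B = 2\sqrt\mu\,I$ as soon as $\mu \neq 0$. The two genuine points are $(H_2)$ and $(H_4)$. For $(H_2)$, $0 \in \rho(P) \cap \rho(Q)$ amounts to $\pm\sqrt\mu \in \rho(\tilde A)$; since $-A \in \text{BIP}(X,\theta_A) \subset \text{Sect}(\theta_A)$ with $\theta_A < \pi/2$, the spectrum of $\tilde A = A - \frac{k}{2}I$ sits in a translated sector around the negative real axis (using $(\H_2)$, $(\H_4)$), and because $\mu \mapsto \sqrt\mu$ halves arguments, $\mu \notin \overline{S_{2\theta_A}}$ pushes $\pm\sqrt\mu$ into $\rho(\tilde A)$. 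For $(H_4)$ I would show that $-P = -A + (\frac{k}{2} - \sqrt\mu)I$ and $-Q$ stay in $\text{BIP}(X,\cdot)$ by a scalar-shift argument resting on $(\H_3)$; this is the step that transports the angle $\theta_A$ into the problem. With the hypotheses in force, \refT{Th Carre} (for $i=1,5$) and \refT{thmprinc gen} (for $i=2$, and for $i=3,4$ under $(H_6)$) supply a unique classical solution, i.e. $\mu \in \rho\big(-\A_i + \frac{k^2}{4}I\big)$, together with the explicit formula \eqref{F Th carre}–\eqref{v Th carre}.

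The decisive step is the uniform bound $\big\|\mu\,(\mu I - (-\A_i + \frac{k^2}{4}I))^{-1}\big\|_{\L(L^p(a,b;X))} \le C$ for $\mu \notin \overline{S_{\alpha'}}$, which I would extract from the representation \eqref{F Th carre}–\eqref{v Th carre}. There $L = -\sqrt{-Q}$ and $M = -\sqrt{-P}$ generate the bounded analytic semigroups, and $Z, W$ (and $U, V$ for $i=3,4$) enter through their inverses. The heart of the matter is the $\mu$-dependence of these generators: as $-P,-Q$ differ from $-\tilde A$ only by $\mp\sqrt\mu$, a further square root shows that for large $|\mu|$ the operators $M, L$ behave like $\mp\mu^{1/4}$-type shifts, so that $M^4, L^4 \sim \mu$ and the gain of $1/|\mu|$ is read off from $\mu$-uniform versions of the semigroup estimates $\|M^2 e^{cM}\|, \|L^2 e^{cL}\|, \|e^{c(L+M)}\| \le Ce^{-\delta c}$ of \refR{Rq1}. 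Securing these bounds, and the boundedness of $Z, W, U, V$, uniformly as $\mu$ runs along the boundary of the sector, is the main obstacle. The scalar heuristic that the eigenvalues of $-\A_i + \frac{k^2}{4}I$ have the form $(\beta^2 - \tilde a)^2$ with $\beta \ge 0$ and $\tilde a \in \sigma(\tilde A)$ — the square of a half-angle-$\theta_A$ sector, hence a half-angle-$2\theta_A$ sector — makes the target angle $2\theta_A$ transparent and organizes the bookkeeping.

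Finally, for $i=3,4$ the constant mode $\beta = 0$ produces eigenvalues $\tilde a^2 = (a - \frac{k}{2})^2$ that may escape $\overline{S_{2\theta_A}}$ near the origin, which is precisely why $(H_6)$ (invertibility of $U, V$) can fail for small $|\mu|$. I would cure this with the shift $+rI$: for $r$ large the translate drives $\mu$ into the region where $\|T^-\|_{\L(X)} < 1$ and $\|T^+\|_{\L(X)} < 1$ (as in item~5 of \refR{Rq1}), restoring $(H_6)$ and giving sectoriality — but only of angle $\pi/2$ when $2\theta_A < \pi/2$, since the admissible region is then pinched near the imaginary axis. Enlarging the shift to some $r' > r$ widens this region and recovers the sharper angle $2\theta_A$ when $2\theta_A \in (0,\pi/2)$, while in the limiting case $2\theta_A = 0$ (where $(\H_5)$, namely $-A \in \text{Sect}(0)$, is invoked) one only recovers an arbitrarily small $\theta_0 > 0$. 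The dichotomy according to whether $2\theta_A$ lies below or above $\pi/2$ thus reflects whether the square of the shifted sector already lies in a half-plane.
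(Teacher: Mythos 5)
Your roadmap coincides with the paper's own proof: the same completion of the square ($A_{k/2}=A-\frac{k}{2}I$), the same factorization into two scalar translates $P_\lambda,Q_\lambda = A_{k/2}\pm i\sqrt{-\lambda-\frac{k^2}{4}}\,I$ with $B_\lambda = P_\lambda-Q_\lambda$ an invertible scalar (your $\sqrt{\mu}$ is the paper's $i\sqrt{-\lambda-k^2/4}$ up to relabelling), the same verification of $(H_1)$--$(H_5)$ by scalar-shift perturbation of BIP (\refL{Lem PQ BIP}, via Monniaux's theorem), the same appeal to \refT{Th Carre} and \refT{thmprinc gen} to place $\lambda$ in $\rho(-\A_i)$, and the same translations by $rI$ and $r'I$ to restore $(H_6)$ for $i=3,4$ (\refP{Prop rho(-Ai)}, \refP{Prop rho(-Ai)2}).

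The genuine gap is that what you label ``the main obstacle'' and leave unsecured is precisely the mathematical content of the proposition. By \refD{Def op Sect}, sectoriality is not the spectral inclusion (which your argument does deliver) but the uniform resolvent bound outside every larger sector, and your proposal neither proves it nor reduces it to a quotable result; the estimates of item 5 of \refR{Rq1} that you invoke are \emph{not} uniform, since their constants $C,\delta$ depend on the operators $L,M$, hence on $\lambda$. In the paper this step occupies most of \refs{sect ens resolv} and \refs{sect estim norme}: the $\lambda$-uniform bounds $\|M_\lambda L_\lambda^{-1}\|_{\L(X)},\|L_\lambda M_\lambda^{-1}\|_{\L(X)}\leqslant C$ of \refL{Lem LM^-1} and the uniform decay $\|(-P_\lambda)^{\alpha}e^{-t_0\sqrt{-P_\lambda}}\|_{\L(X)}\leqslant Ke^{-t_0\tilde{\omega}|\lambda+k^2/4|^{1/4}}$ of \refL{Lem DY} (both imported from Dore--Yakubov), the uniform invertibility of $Z$, $W$ (\refL{Lem FLMT}) and of $U_\lambda$, $V_\lambda$ (estimate \eqref{estim U-1 V-1}, valid only for $|\lambda+\frac{k^2}{4}|\geqslant r$, which is the real reason for the excluded ball when $i=3,4$), the convolution estimates of \refL{Lem LMT} and \refL{Lem LMT 2}, and the bounds on $v_0$ and $F'_{0,f}$ of \refL{Lem estim norme F'}, all assembled in \refP{Prop estim norme} into $\|(-\A_i-\lambda I)^{-1}\|_{\L(L^p(a,b;X))}\leqslant C\,(1+|\lambda+\frac{k^2}{4}|)^{-1}$ --- after which the proposition is a few-line deduction. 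Your heuristic ``$M_\lambda\sim\mu^{1/4}$, so $M_\lambda^{4}\sim\mu$ yields the gain $1/|\mu|$'' predicts the correct order but is not an argument: in the paper the gain arises as the product of two factors $(1+\sqrt{|\lambda+k^2/4|})^{-1}$, one from the inner second-order solve ($v_0$) and one from the outer one ($F_{0,f}$), each resting on the quoted uniform estimates. A minor slip besides: the case $2\theta_A=0$ does not ``invoke $(\H_5)$''; it is the case where the BIP angle $\theta_A$ vanishes, whereas $(\H_5)$ concerns the sectoriality angle and is used only in \refP{Prop rho(-Ai)2} to obtain $0\in\rho(-\A_i)$, playing no role in the $\theta_0$ statement.
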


\begin{Th}\label{Th final}
Assume that $(\H_1)$, $(\H_2)$, $(\H_3)$ and $(\H_4)$ hold. Then, for $i=1,2,3,4,5$, if $\theta_A < \pi/4$, operator $\A_i$ is the infinitesimal generator of an analytic strongly continuous semigroup $\left(e^{t \A_i}\right)_{t \geqslant 0}$.
\end{Th}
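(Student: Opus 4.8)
The plan is to read off \refT{Th final} directly from the sectoriality already established in \refP{Prop Sect}, combined with the classical principle that an operator whose negative is sectorial of angle \emph{strictly} less than $\pi/2$ is the generator of a bounded analytic semigroup (see \cite{haase} or \cite{engel-nagel}). The whole point of the hypothesis $\theta_A < \pi/4$ is that it forces $2\theta_A < \pi/2$, and this strict inequality is exactly what upgrades sectoriality to analyticity; no further spectral analysis is needed beyond \refP{Prop Sect}.

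First I would dispose of the cases $i=1,2,5$. By \refP{Prop Sect}, the operator $B_i := -\A_i + \frac{k^2}{4}I$ lies in $\text{Sect}(2\theta_A)$ with $2\theta_A < \pi/2$. Hence its negative $-B_i = \A_i - \frac{k^2}{4}I$ generates a bounded analytic semigroup on $L^p(a,b;X)$. Since adding the scalar operator $\frac{k^2}{4}I$ merely multiplies the semigroup by $e^{(k^2/4)t}$ and does not affect analyticity, the operator $\A_i = \bigl(\A_i - \frac{k^2}{4}I\bigr) + \frac{k^2}{4}I$ generates an analytic semigroup $\bigl(e^{t\A_i}\bigr)_{t\geqslant 0}$.

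Next I would treat $i=3,4$. Here the crude shift by $r$ in \refP{Prop Sect} only produces the borderline angle $\pi/2$, which is insufficient, so I would invoke instead the refined part of \refP{Prop Sect}, which furnishes $r' > 0$ and $\theta_0 > 0$ such that
$$
-\A_i + \frac{k^2}{4}I + r'I \in \text{Sect}(2\theta_A) \quad \text{when } 2\theta_A \in \Bigl(0,\frac{\pi}{2}\Bigr), \qquad -\A_i + \frac{k^2}{4}I + r'I \in \text{Sect}(\theta_0) \quad \text{when } 2\theta_A = 0,
$$
the sector angle being $< \pi/2$ in both sub-cases. Arguing exactly as before, $\A_i - \bigl(\frac{k^2}{4}+r'\bigr)I$ generates a bounded analytic semigroup, and undoing the scalar shift shows that $\A_i$ generates an analytic semigroup.

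It remains to secure the $C_0$ (strong continuity) property. For this I would use that $X$ is a UMD space by $(\H_1)$, hence reflexive, so that $L^p(a,b;X)$ is reflexive for $p \in (1,+\infty)$; consequently the sectorial operators above are densely defined (see \cite{haase}), which is precisely equivalent to the associated analytic semigroup being strongly continuous at the origin. I do not anticipate a real obstacle: all the spectral estimates are already packaged in \refP{Prop Sect}. The only two points demanding a little care are verifying that the relevant angle is \emph{strictly} below $\pi/2$ in every case — which is exactly where $\theta_A < \pi/4$ enters, and where, for $i=3,4$, one must use the sharpened $(r',\theta_0)$ version of \refP{Prop Sect} rather than the $r$ version — and checking the density of the domain underlying the strong continuity.
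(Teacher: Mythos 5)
Your proposal is correct and follows exactly the route the paper intends: \refT{Th final} is read off from \refP{Prop Sect} (using the refined $(r',\theta_0)$ shift for $i=3,4$, precisely because the plain $r$-shift only yields the borderline angle $\pi/2$), combined with the standard fact that a densely defined operator whose negative is sectorial of angle strictly less than $\pi/2$ generates an analytic $C_0$-semigroup, and with stability of generation under the bounded scalar perturbation $\frac{k^2}{4}I$ (resp. $(\frac{k^2}{4}+r')I$). The paper leaves this deduction implicit---it gives no separate proof of the theorem beyond the proof of \refP{Prop Sect}---so your write-up, including the density-via-reflexivity justification of strong continuity, supplies the same argument in substance.
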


\begin{Rem}
For $i = 1,2,5$ and $\theta_A < \pi/4$, from \refP{Prop Sect}, we have
$$\exists \, M_i \geqslant 1 : \forall\,t \geqslant 0, \quad \left\|e^{t \A_i}\right\|_{\L(X)} \leqslant M_i \,e^{t \frac{k^2}{4}}.$$
\end{Rem}

First of all, we have to study the spectral properties of $\A_i$. Thus, we focus on the resolvent set of $-\A_i$. To this end, we analyse the equation $(-\A_i- \lambda I)u=f$.

\subsection{Study of the resolvent set}\label{sect ens resolv}

Let $\lambda \in \CC$ and fix $i \in \{1,2,3,4,5\}$. By definition $\lambda \in \rho(\A_i)$ means that the following equation
\begin{equation}\label{EDA}
u^{(4)}(x) + (2A - kI) u''(x) + (A^2 - kA - \lambda I) u(x) = f(x), \quad x \in (a,b),
\end{equation}
supplemented by the boundary conditions (BCi)$_0$ admits a unique solution $u$ in $D(\A_i)$. Since $A$ satisfies the first four previous assumptions, we define
$$A_{k/2} = A - \frac{k}{2}I.$$
Let $\lambda \in \CC \setminus (-k^2/4,+\infty)$ which means that $\dis -\lambda - \frac{k^2}{4} \in \CC \setminus (-\infty,0)$. We set
\begin{equation}\label{PQ}
P_\lambda = A_{k/2} + i\sqrt{-\lambda - \frac{k^2}{4}}I \quad \text{and} \quad Q_\lambda = A_{k/2} - i\sqrt{-\lambda - \frac{k^2}{4}}I,
\end{equation}
so that we can rewrite equation \eqref{EDA} as
\begin{equation*}
u^{(4)}(x) + (P_\lambda + Q_\lambda) u''(x) + P_\lambda Q_\lambda u(x) = f(x), \quad x \in (a,b),
\end{equation*}
and use results of \refs{sect linear steady case}.

We state three technical lemmas which allow us to justify the other results of this section. We first recall that if $z \in \CC$, then $\arg(z)$ is the unique argument of $z$ in $(-\pi,\pi]$.

\begin{Lem}\label{Lem sect}
Let $\lambda \in \CC$. The two following statements are equivalent
\begin{itemize}
\item $\dis\lambda \in -\frac{k^2}{4} + \left(\CC \setminus \overline{S_{2\theta_A}}\right),$

\item $\dis\lambda \neq -\frac{k^2}{4}$ and $\dis\left|\arg\left(-\lambda-\frac{k^2}{4}\right) \pm \pi\right| < 2\left(\pi- \theta_A\right).$
\end{itemize}
\end{Lem}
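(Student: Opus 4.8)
The plan is to reduce the claim to an elementary statement about the argument of a single complex number. I would write $\mu := -\lambda - \tfrac{k^2}{4}$, so that the first bullet reads $-\mu \in \CC \setminus \overline{S_{2\theta_A}}$ together with $\mu \neq 0$, while the second reads $\mu \neq 0$ and $|\arg(\mu) \pm \pi| < 2(\pi - \theta_A)$. Throughout I fix the branch $\arg(\cdot) \in (-\pi, \pi]$, as recalled just before the lemma. The geometric content is that $\CC \setminus \overline{S_{2\theta_A}}$ is the open sector of half-opening $\pi - 2\theta_A$ about the negative real axis; equivalently $-\mu$ lies in it precisely when $\mu$ lies in the open sector of half-opening $\pi - 2\theta_A$ about the positive real axis, i.e. when $|\arg(\mu)| < \pi - 2\theta_A$. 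My goal is to show that \emph{both} bullets coincide with this single inequality.

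First I would treat the second bullet. Since $\arg(\mu) \in (-\pi, \pi]$ we have $\arg(\mu) + \pi \in (0, 2\pi]$ and $\arg(\mu) - \pi \in (-2\pi, 0]$, whence
\[
|\arg(\mu) + \pi| = \arg(\mu) + \pi \qquad \text{and} \qquad |\arg(\mu) - \pi| = \pi - \arg(\mu).
\]
Using $2(\pi - \theta_A) = 2\pi - 2\theta_A$, the inequality with the $+$ sign is equivalent to $\arg(\mu) < \pi - 2\theta_A$ and the one with the $-$ sign to $\arg(\mu) > -(\pi - 2\theta_A)$. Requiring both (the intended reading of the $\pm$) yields exactly $|\arg(\mu)| < \pi - 2\theta_A$.

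Next I would treat the first bullet, for $2\theta_A \in (0,\pi)$. The definition \eqref{defsector} gives $\overline{S_{2\theta_A}} = \{0\} \cup \{z \neq 0 : |\arg z| \leqslant 2\theta_A\}$, so $-\mu \in \CC \setminus \overline{S_{2\theta_A}}$ together with $\mu \neq 0$ is equivalent to $|\arg(-\mu)| > 2\theta_A$. I would then express $\arg(-\mu)$ through $\arg(\mu)$: if $\arg(\mu) \in (0, \pi]$ then $\arg(-\mu) = \arg(\mu) - \pi$, so $|\arg(-\mu)| = \pi - \arg(\mu)$; if $\arg(\mu) \in (-\pi, 0]$ then $\arg(-\mu) = \arg(\mu) + \pi$, so $|\arg(-\mu)| = \pi + \arg(\mu)$. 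In the first case $|\arg(-\mu)| > 2\theta_A$ reads $\arg(\mu) < \pi - 2\theta_A$, in the second $\arg(\mu) > 2\theta_A - \pi$; together these again say precisely $|\arg(\mu)| < \pi - 2\theta_A$. Comparing with the preceding paragraph establishes the equivalence.

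The computation is routine; the only points requiring care are the fixed branch of the argument and the edge case $\theta_A = 0$, where $2\theta_A = 0$ and \eqref{defsector} sets $S_0 = (0, +\infty)$, so $\CC \setminus \overline{S_0} = \CC \setminus [0, +\infty)$. In that case the first bullet becomes $\mu \neq 0$ and $-\mu \notin [0,+\infty)$, i.e. $\arg(-\mu) \neq 0$ with $\mu \neq 0$, equivalently $|\arg(\mu)| < \pi$, which matches $|\arg(\mu)| < \pi - 2\theta_A$ at $\theta_A = 0$. I expect no genuine obstacle beyond this bookkeeping with the argument branch and the boundary of the sector.
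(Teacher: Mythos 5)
Your proof is correct and follows essentially the same route as the paper's: both expand the $\pm$ condition into the two inequalities $\arg(\mu) < \pi - 2\theta_A$ and $-\arg(\mu) < \pi - 2\theta_A$, then case-split on the sign of $\arg(\mu)$ and use $\arg(-\mu) = \arg(\mu) \mp \pi$ to identify this with membership in $\CC \setminus \overline{S_{2\theta_A}}$. The only (cosmetic) differences are that you route both bullets through the intermediate condition $|\arg(\mu)| < \pi - 2\theta_A$ rather than transforming one bullet directly into the other, and that you spell out the edge case $\theta_A = 0$, which the paper leaves implicit.
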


\begin{proof}
Let $\lambda \in \CC$, then
$$\left|\arg\left(-\lambda-\frac{k^2}{4}\right) \pm \pi\right| < 2\left(\pi- \theta_A\right),$$
is equivalent to
\begin{equation}\label{syst arg}
\dis\arg\left(-\lambda-\frac{k^2}{4}\right)   < \pi - 2 \theta_A \quad \text{and} \quad \dis - \arg\left(-\lambda-\frac{k^2}{4}\right) < \pi - 2 \theta_A.
\end{equation}
Now, it remains to study the two following cases.
\begin{itemize}
\item First case : $\dis \arg\left(-\lambda-\frac{k^2}{4}\right) \geqslant 0$.

Here, \eqref{syst arg} is equivalent to 
$$0 \leqslant \arg\left(-\lambda-\frac{k^2}{4}\right) < \pi - 2 \theta_A,$$
and using $\dis\arg\left(\lambda+\frac{k^2}{4}\right) = \arg\left(-\lambda-\frac{k^2}{4}\right) - \pi$, then \eqref{syst arg} becomes
$$-\pi \leqslant \arg\left(\lambda + \frac{k^2}{4}\right) < -2 \theta_A.$$

\item Second case : $\dis\arg\left(-\lambda-\frac{k^2}{4}\right) < 0$. 

Now, \eqref{syst arg} writes
$$0 \leqslant -\arg\left(-\lambda-\frac{k^2}{4}\right) < \pi - 2 \theta_A,$$
and using $\dis\arg\left(\lambda+\frac{k^2}{4}\right) = \arg\left(-\lambda-\frac{k^2}{4}\right) + \pi$, then \eqref{syst arg} becomes
$$2 \theta_A < \arg\left(\lambda + \frac{k^2}{4}\right) \leqslant \pi.$$
\end{itemize}
Finally, \eqref{syst arg} is equivalent to $\lambda + \dis \frac{k^2}{4} \in \CC \setminus \overline{S_{2\theta_A}}$, which gives the result.
\end{proof}

\begin{Lem}\label{Lem PQ BIP} 
Assume that $(\H_1)$, $(\H_2)$, $(\H_3)$ and $(\H_4)$ hold. Let $\lambda \in - \dis\frac{k^2}{4} + \left(\CC \setminus \overline{S_{2\theta_A}}\right)$. Then, we have
\begin{enumerate}
\item $\dis \left|\arg\left(\pm i\sqrt{-\lambda - \frac{k^2}{4}}\right) \right| = \frac{\left|\arg\left(-\lambda-\frac{k^2}{4}\right) \pm \pi \right|}{2} < \pi - \theta_A.$

\item $-P_\lambda \in \text{BIP}\,(X,\theta_1)$ and $-Q_\lambda \in \text{BIP}\,(X,\theta_2)$, with $\theta_1,\theta_2 \in [0,\pi-\theta_A)$, where
$$\theta_1 := \max\left(\theta_A, \frac{\left|\arg\left(-\lambda-\frac{k^2}{4}\right) + \pi\right|}{2}\right) \quad\text{and} \quad \theta_2 := \max\left(\theta_A, \frac{\left|\arg\left(-\lambda-\frac{k^2}{4}\right) - \pi \right|}{2}\right).$$ 
\end{enumerate}
\end{Lem}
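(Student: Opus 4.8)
The plan is to prove the two assertions in turn, the first being an elementary computation about arguments that then feeds directly into the second. Throughout I write $w := -\lambda - \tfrac{k^2}{4}$; by hypothesis and \refL{Lem sect} one has $w \neq 0$ and $\arg(w)\in(-\pi,\pi)$, since $|\arg(w)+\pi| < 2(\pi-\theta_A) < 2\pi$ forces $\arg(w)\neq\pi$.

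For the first assertion I would use the principal branch of the square root. Since $\arg(w)\in(-\pi,\pi)$, we have $\arg(\sqrt w)=\tfrac12\arg(w)\in(-\tfrac\pi2,\tfrac\pi2)$, and multiplying by $\pm i = e^{\pm i\pi/2}$ keeps the argument inside $(-\pi,\pi]$, so that $\arg(i\sqrt w)=\tfrac{\arg(w)+\pi}{2}\in(0,\pi)$ and $\arg(-i\sqrt w)=\tfrac{\arg(w)-\pi}{2}\in(-\pi,0)$. Taking absolute values gives the claimed identity $|\arg(\pm i\sqrt w)| = \tfrac{|\arg(w)\pm\pi|}{2}$. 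The bound by $\pi-\theta_A$ is then immediate: by \refL{Lem sect} the hypothesis $\lambda \in -\tfrac{k^2}4 + (\CC\setminus\overline{S_{2\theta_A}})$ is precisely $|\arg(w)\pm\pi| < 2(\pi-\theta_A)$, and halving yields the estimate.

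For the second assertion I would first rewrite $-P_\lambda = -A_{k/2} + (-i\sqrt w)\,I$ and $-Q_\lambda = -A_{k/2} + (i\sqrt w)\,I$, and reduce everything to a single key fact, namely
$$-A_{k/2} \in \text{BIP}(X,\theta_A).$$
Note first that $\tfrac k2\in\rho(A)$ always: for $k\le 0$ this is $(\H_4)$ because $\tfrac k2\in[k,+\infty)$, while for $k>0$ it follows from the sectoriality of $-A$ given by $(\H_3)$, as positive reals avoid the sector around $\RR_-$ containing $\sigma(A)$; hence $A_{k/2}$ is boundedly invertible and $-A_{k/2}$ is injective with dense domain and range. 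When $k\ge 0$ the BIP membership is then immediate from the commuting sum theorem (\cite{pruss-sohr}, Theorem 4, p. 441), writing $-A_{k/2}=(-A)+\tfrac k2 I$ with $-A\in\text{BIP}(X,\theta_A)$ and $\tfrac k2 I\in\text{BIP}(X,0)$. The delicate case is $k<0$, where the shift is in the ``wrong'' direction; here the point is to use $(\H_4)$ in the sharper form that $[k,+\infty)\subset\rho(A)$, together with $(\H_2)$ and the sectoriality of $-A$, to keep $\sigma(-A_{k/2})=\sigma(-A)+\tfrac k2$ inside $\overline{S_{\theta_A}}$ with the attendant resolvent bounds. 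I expect this real left-shift for $k<0$ to be the main obstacle of the whole lemma, since a naive shift enlarges the sector; controlling it is exactly what $(\H_4)$ is there to do.

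Granting $-A_{k/2}\in\text{BIP}(X,\theta_A)$, the conclusion follows from a second application of the sum theorem, now adding a scalar. By the first assertion $|\arg(\mp i\sqrt w)| < \pi-\theta_A < \pi$, so $\mp i\sqrt w\notin(-\infty,0]$ and the bounded scalar operators $(\mp i\sqrt w)I$ lie in $\text{BIP}(X,|\arg(\mp i\sqrt w)|)$; they commute with $-A_{k/2}$ trivially, and $\theta_A + |\arg(\mp i\sqrt w)| < \pi$. Hence the sums belong to BIP with angle equal to the larger of the two arguments, namely
$$-P_\lambda \in \text{BIP}\left(X, \max\left(\theta_A, \tfrac{|\arg(w)-\pi|}{2}\right)\right) \quad\text{and}\quad -Q_\lambda \in \text{BIP}\left(X, \max\left(\theta_A, \tfrac{|\arg(w)+\pi|}{2}\right)\right).$$
By the first assertion these two angles are exactly $\theta_2$ and $\theta_1$ respectively (each operator being matched to the argument of its own scalar shift), and since both are $<\pi-\theta_A$ they indeed lie in $[0,\pi-\theta_A)$, which completes the proof.
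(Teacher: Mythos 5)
Your assertion 1 is correct and is exactly the paper's (elementary) argument, and your architecture for assertion 2 --- reduce to $-A_{k/2}\in\text{BIP}(X,\theta_A)$, then add the scalar $\mp i\sqrt{w}$ by a commuting perturbation result under the parabolicity condition $\theta_A+\left|\arg\left(\mp i\sqrt{w}\right)\right|<\pi$ --- is also the paper's. But at the step you yourself call ``the main obstacle of the whole lemma'', namely $-A_{k/2}\in\text{BIP}(X,\theta_A)$ for $k<0$, your proposal has a genuine gap. What you offer (``use $(\H_4)$ \dots to keep $\sigma(-A_{k/2})$ inside $\overline{S_{\theta_A}}$ with the attendant resolvent bounds'') is a \emph{sectoriality} statement, and sectoriality never implies bounded imaginary powers: there exist sectorial operators (even on Hilbert spaces) with no bounded imaginary powers at all, so no spectral localisation plus resolvent estimates can, by itself, give membership in BIP. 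Moreover the containment you invoke is itself unjustified: $\sigma(-A_{k/2})=\sigma(-A)+\tfrac{k}{2}$, and for $\theta_A>0$ a left shift can push points of $\overline{S_{\theta_A}}$ of moderate modulus out of $\overline{S_{\theta_A}}$; hypotheses $(\H_2)$ and $(\H_4)$ only clear a ray (and hence a tube) along the real axis, not a shifted copy of the whole sector. The paper closes exactly this gap by quoting a nontrivial perturbation theorem for imaginary powers under real shifts, \cite{arendt-bu-haase}, Theorem 2.3, p.~69 --- this is also where the UMD hypothesis $(\H_1)$ is used. Without that theorem, or an equivalent argument replacing it, your proof of assertion 2 does not go through.

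Two secondary points. First, to obtain the \emph{exact} angles $\theta_A$ and $\max\left(\theta_A,\left|\arg\left(\mp i\sqrt{w}\right)\right|\right)$ you cannot rely on the commuting sum theorem of \cite{pruss-sohr}, Theorem 4, p.~441, which you invoke both for the shift $-A+\tfrac{k}{2}I$, $k\geqslant 0$, and (``a second application of the sum theorem'') for the scalar addition: that theorem produces BIP with angle $\max+\varepsilon$, as the paper itself records when applying it to $-(L+M)$ in \refR{Rq1}, and the $\varepsilon$-loss destroys the claimed equalities (e.g.\ whenever $\left|\arg\left(\mp i\sqrt{w}\right)\right|\leqslant\theta_A$). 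The $\varepsilon$-free scalar perturbation result is \cite{monniaux}, Theorem 2.4, p.~408, which is what the paper cites. Second, a point in your favour: matching each operator to the argument of its own scalar shift, as you do, yields $-P_\lambda\in\text{BIP}(X,\theta_2)$ and $-Q_\lambda\in\text{BIP}(X,\theta_1)$, the opposite pairing to the one displayed in the lemma; since $-P_\lambda=-A_{k/2}-i\sqrt{w}\,I$, your pairing is the correct one, the paper's $\pm$ shorthand hides the swap, and nothing downstream is affected because only the bound $\theta_1,\theta_2<\pi-\theta_A$ is ever used.
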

\begin{proof}\hfill
\begin{enumerate}
\item Since $\theta_A < \pi/2$, the result follows from \refL{Lem sect}.

\item From $(\H_1)$, $(\H_2)$, $(\H_3)$, $(\H_4)$ and \cite{arendt-bu-haase}, Theorem 2.3, p. 69, we have
$$-A_{k/2} \in \text{BIP}(X,\theta_A),$$
hence, if $\lambda = -k^2/4$, we obtain
$$-P_{-k^2/4} = -Q_{-k^2/4}  = -A_{k/2} \in \text{BIP}\,(X,\theta_A).$$
Moreover, if $\lambda \in - \dis\frac{k^2}{4} + \left(\CC \setminus \overline{S_{2\theta_A}}\right)$, then 
\begin{equation}\label{égalité arg}
\left|\arg\left(\pm i\sqrt{-\lambda - \frac{k^2}{4}}\right)\right| = \left|\pm \frac{\pi}{2} + \frac{\arg\left(-\lambda-\frac{k^2}{4}\right)}{2}\right| \in (0,\pi),
\end{equation}
so $\pm i\sqrt{-\lambda - \frac{k^2}{4}} \notin \RR$. 

Finally, $-A_{k/2} \in$ BIP$\,(X,\theta_A)$ and $\pm i\sqrt{-\lambda - \frac{k^2}{4}} \in \CC\setminus (-\infty,0)$. Moreover, from  \eqref{égalité arg} and \refL{Lem sect}, we have
\begin{equation}\label{condition parabolicité}
\theta_A + \left|\arg\left(\pm i\sqrt{-\lambda - \frac{k^2}{4}}\right)\right| < \pi,
\end{equation}
thus, from \cite{monniaux}, Theorem 2.4, p. 408, we obtain
$$-P_\lambda \in \text{BIP}(X,\theta_1)\quad \text{and} \quad -Q_\lambda \in \text{BIP}(X,\theta_2).$$
\end{enumerate}
\end{proof}
Now, in order to use \cite{dore-yakubov} in the next proof, we first need to give the following remark.
\begin{Rem}\label{Rem type phi}
In the sequel, we will use results of \cite{dore-yakubov} in which the authors use operators of type $\varphi$ instead of sectorial operators. For $\varphi \in (0,\pi)$. A closed  linear operator $T:D(T) \subset X\longrightarrow X$ is said of type $\varphi $ with bound $C$ if and only if $\overline{S_{\varphi}}\subset \rho(-T) $ and
\begin{equation*}
\forall \lambda \in \overline{S_{\varphi}},\quad\left \Vert \left(T + \lambda I\right)
^{-1}\right \Vert _{\mathcal{L}(X)}\leqslant \frac{C}{1+\left \vert \lambda
\right \vert }.
\end{equation*}
It is clear that if $T$ is of type $\varphi$, then $T \in$ Sect$(\pi - \varphi)$. More precisely, the two notions are linked by the equivalence of the two following assertions:
\begin{enumerate}
\item $T \in$ Sect$(\theta_T)$ with $\theta_T \in [0,\pi)$ and $0 \in \rho(T)$,
\item $\forall ~\varepsilon \in (0, \pi- \theta_T)$, $T$ is of type $\varphi = \pi-\theta_T -\varepsilon \in (0,\pi)$.
\end{enumerate}   
\end{Rem}
\begin{Lem}\label{Lem LM^-1}
Let $k \in \RR$ and $\lambda \in -k^2/4 + \left(\CC \setminus \overline{S_{2\theta_A}}\right)$ and assume that $(\H_1)$, $(\H_2)$, $(\H_3)$ and $(\H_4)$ hold. Then $0\in \rho(P_\lambda) \cap \rho(Q_\lambda)$ and there exists $C > 0$, independent of $\lambda$, such that
$$\left\|M_\lambda L_\lambda^{-1}\right\|_{\L(X)} \leqslant C \quad \text{and} \quad \left\|L_\lambda M_\lambda^{-1}\right\|_{\L(X)} \leqslant C,$$
where $M_\lambda := -\sqrt{-P_\lambda}$ and $L_\lambda := -\sqrt{-Q_\lambda}$.
\end{Lem}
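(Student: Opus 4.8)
The plan is to deduce the invertibility from \refL{Lem PQ BIP}, and then to reduce both operator estimates to a single scalar multiplier bound through the functional calculus of the one operator $A_{k/2}$, the entire difficulty lying in the uniformity of that bound in $\lambda$. First I would check that $0\in\rho(P_\lambda)\cap\rho(Q_\lambda)$. Since
$-P_\lambda=-A_{k/2}-i\sqrt{-\lambda-\frac{k^2}{4}}\,I$ and $-Q_\lambda=-A_{k/2}+i\sqrt{-\lambda-\frac{k^2}{4}}\,I$, bounded invertibility amounts to $\pm i\sqrt{-\lambda-\frac{k^2}{4}}\in\rho(-A_{k/2})$. By $(\H_3)$, $-A_{k/2}$ is sectorial of angle $\theta_A$ (it lies in $\text{BIP}(X,\theta_A)$), so $\sigma(-A_{k/2})\subset\overline{S_{\theta_A}}$; and applying point 1 of \refL{Lem PQ BIP} with both signs gives $\theta_A<\big|\arg(\pm i\sqrt{-\lambda-\frac{k^2}{4}})\big|<\pi-\theta_A$, so these two scalars lie strictly outside $\overline{S_{\theta_A}}$, hence in $\rho(-A_{k/2})$. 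This yields $0\in\rho(P_\lambda)\cap\rho(Q_\lambda)$, and consequently $M_\lambda=-\sqrt{-P_\lambda}$ and $L_\lambda=-\sqrt{-Q_\lambda}$ are boundedly invertible.

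Next, because $-P_\lambda$ and $-Q_\lambda$ are two commuting functions of $A_{k/2}$, I would use the Dunford--Riesz representation of the square roots (exactly the one used above to define $\sqrt{-P}$ and $\sqrt{-Q}$) to write
\begin{equation*}
M_\lambda L_\lambda^{-1}=(-P_\lambda)^{1/2}(-Q_\lambda)^{-1/2}=g_\lambda(-A_{k/2}),\quad g_\lambda(z)=\left(\frac{z-i\sqrt{-\lambda-\frac{k^2}{4}}}{z+i\sqrt{-\lambda-\frac{k^2}{4}}}\right)^{1/2},
\end{equation*}
the sign convention in $M_\lambda$ and $L_\lambda$ cancelling in the quotient, while $L_\lambda M_\lambda^{-1}$ is obtained by exchanging the roles of $P_\lambda$ and $Q_\lambda$. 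I would then represent $g_\lambda(-A_{k/2})$ by a Dunford integral over a sectorial contour $\Gamma$ enclosing $\sigma(-A_{k/2})$ and estimate it by the uniform sectorial resolvent bound of $-A_{k/2}$ times $\sup_{z\in\Gamma}|g_\lambda(z)|$.

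The main obstacle is the uniformity in $\lambda$ of this last supremum. The two branch points $\pm i\sqrt{-\lambda-\frac{k^2}{4}}$ move with $\lambda$, and the estimate threatens to degenerate precisely when one of them approaches the spectral sector $\overline{S_{\theta_A}}$; what must prevent this is the angular separation $\big|\arg(\pm i\sqrt{-\lambda-\frac{k^2}{4}})\big|>\theta_A$ furnished by \refL{Lem sect}, which confines both branch points to the fixed double sector $\{\theta_A<|\arg z|<\pi-\theta_A\}$, away from $\Gamma$ and from $\sigma(-A_{k/2})$. The delicate point is to convert this angular separation into a scalar inequality $|g_\lambda(z)|\le C$ on $\Gamma$ with $C$ depending only on $\theta_A$ and the sectoriality constant of $A_{k/2}$, and then to verify that the resulting Dunford integral converges with a $\lambda$-independent bound; this uniform control of the quotient of fractional powers is the technical heart of the lemma, and the same computation, with $P_\lambda$ and $Q_\lambda$ interchanged, settles $\|L_\lambda M_\lambda^{-1}\|$.
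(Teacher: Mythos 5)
Your treatment of the invertibility is correct and is in substance the paper's own argument: the paper also reduces $0\in\rho(P_\lambda)\cap\rho(Q_\lambda)$ to the position of the scalars $\pm i\sqrt{-\lambda-\frac{k^2}{4}}$ relative to the sector of $-A_{k/2}$ (it phrases this as the spectral--angle inequality $\psi_{A,k}+\psi_{\lambda,k}<\pi$ and quotes Pr\"uss, Theorem 8.3 (iv)), and your observation that these scalars lie in the double sector $\{\theta_A<|\arg z|<\pi-\theta_A\}$, hence in $\rho(-A_{k/2})$, is equivalent.

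The gap is in the second half, exactly at the step you defer as the ``technical heart'': estimating $g_\lambda(-A_{k/2})$ by the resolvent bound of $-A_{k/2}$ times $\sup_{z\in\Gamma}|g_\lambda(z)|$. This step fails, for two reasons. First, the integral does not give what you need: $g_\lambda(z)\to 1$ as $z\to\infty$ and $g_\lambda(z)\to\pm i$ as $z\to 0$, so after the unavoidable regularization (subtracting $g_\lambda(\infty)=1$) the integrand on the range $1\lesssim|z|\lesssim\sqrt{|\lambda+\frac{k^2}{4}|}$ has size $\simeq\sqrt{2}\,\|R(z)\|\simeq 1/|z|$ with no smallness, and the absolute-value estimate then produces a factor $\log\bigl(2+|\lambda+\tfrac{k^2}{4}|\bigr)$ --- precisely the uniformity in $\lambda$ asserted by the lemma is lost. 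Second, and more fundamentally, the mechanism ``$\|g(-A_{k/2})\|_{\L(X)}\leqslant C\sup|g|$'' used as a black box is the boundedness of the $H^\infty$ functional calculus of $-A_{k/2}$ on a sector; this is strictly stronger than BIP, and $(\H_3)$ gives only BIP. So the uniform bound for the family $g_\lambda$ is true (the lemma is true), but it cannot be extracted from the sup norm of the symbol; it must come from the specific structure of $g_\lambda$. That is what the paper does, by a purely algebraic factorization:
\begin{equation*}
M_\lambda L_\lambda^{-1}=-P_\lambda M_\lambda^{-1}L_\lambda^{-1}
=\Bigl(\sqrt{-A_{k/2}}\,(-P_\lambda)^{-\frac{1}{2}}\Bigr)\Bigl(\sqrt{-A_{k/2}}\,(-Q_\lambda)^{-\frac{1}{2}}\Bigr)-i\sqrt{-\lambda-\tfrac{k^2}{4}}\,(-P_\lambda)^{-\frac{1}{2}}(-Q_\lambda)^{-\frac{1}{2}},
\end{equation*}
followed by the Dore--Yakubov estimates (Lemma 2.6, statement a)): $\bigl\|\sqrt{-A_{k/2}}\,(-A_{k/2}\pm i\sqrt{-\lambda-\frac{k^2}{4}}\,I)^{-\frac{1}{2}}\bigr\|_{\L(X)}\leqslant C_1$ and $\bigl\|(-A_{k/2}\pm i\sqrt{-\lambda-\frac{k^2}{4}}\,I)^{-\frac{1}{2}}\bigr\|_{\L(X)}\leqslant C_2\,|\lambda+\frac{k^2}{4}|^{-\frac{1}{4}}$, which are resolvent-type bounds valid for any operator of type $\varphi$ and require no $H^\infty$ calculus; the first term is then $\leqslant C_1^2$ and the second $\leqslant C_2^2$. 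To repair your proof you would either have to establish such product estimates yourself (which is essentially the paper's route) or add a bounded $H^\infty$ calculus hypothesis, which the paper does not assume.
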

\begin{proof}
Since $-A_{k/2} \in$ BIP\,$(X,\theta_A)$, its spectral angle $\psi_{A,k}$ satisfies $0\leqslant \psi_{A,k} \leqslant \theta_A$ (see \cite{pruss}, p.~218). Moreover, $\psi_{\lambda,k}$, the spectral angle of $i\sqrt{-\lambda - \frac{k^2}{4}}\,I$, satisfies $\psi_{\lambda,k} = \left|\arg\left(i\sqrt{-\lambda - \frac{k^2}{4}}\right)\right|$. Furthermore, from \eqref{condition parabolicité}, it follows that
$$\psi_{A,k} + \psi_{\lambda,k} \leqslant \theta_A + \left|\arg\left(i\sqrt{-\lambda - \frac{k^2}{4}}\right)\right| < \pi.$$
Then due to Theorem 8.3 (iv), p. 218 in \cite{pruss}, we have $0 \in \rho(P_\lambda)$. In the same way, we obtain $0 \in \rho(Q_\lambda)$. Thus, from \refL{Lem PQ BIP}, we deduce $\sqrt{-P_\lambda}$ and $\sqrt{-Q_\lambda}$ are well defined and invertible with bounded inverse. Since we have
$$M_\lambda L_\lambda^{-1} = M_\lambda^2 M_\lambda^{-1}L_\lambda^{-1} = -P_\lambda M_\lambda^{-1} L_\lambda^{-1},$$
we deduce that
$$\begin{array}{lll}
M_\lambda L_\lambda^{-1} & = & \dis \left(- A_{k/2} - i \sqrt{-\lambda-\frac{k^2}{4}}\, I\right) M_\lambda^{-1} L_\lambda^{-1} \\ \ecart

& = & \dis -A_{k/2} M_\lambda^{-1} L_\lambda^{-1} - i \sqrt{-\lambda-\frac{k^2}{4}}\,  M_\lambda^{-1} L_\lambda^{-1} \\ \ecart

& = & \dis \sqrt{-A_{k/2}} \, M_\lambda^{-1} \sqrt{-A_{k/2}} \, L_\lambda^{-1} - i \sqrt{-\lambda-\frac{k^2}{4}}\,  M_\lambda^{-1} L_\lambda^{-1} \\ \ecart

& = & \dis \sqrt{-A_{k/2}}  \left(- A_{k/2} - i \sqrt{-\lambda-\frac{k^2}{4}}\, I\right)^{-\frac{1}{2}}  \sqrt{-A_{k/2}}  \left(- A_{k/2} + i \sqrt{-\lambda-\frac{k^2}{4}}\, I\right)^{-\frac{1}{2}} \\ \ecart
&&\dis - i \sqrt{-\lambda-\frac{k^2}{4}}\,  \left(- A_{k/2} - i \sqrt{-\lambda-\frac{k^2}{4}}\, I\right)^{-\frac{1}{2}}  \left(- A_{k/2} + i \sqrt{-\lambda-\frac{k^2}{4}}\, I\right)^{-\frac{1}{2}}. 
\end{array}$$
Thus, it follows that
$$\begin{array}{lll}
\left\|M_\lambda L_\lambda^{-1} \right\|_{\L(X)} & \leqslant & \dis \left\|\sqrt{-A_{k/2}}  \left(- A_{k/2} - i \sqrt{-\lambda-\frac{k^2}{4}}\, I\right)^{-\frac{1}{2}}  \right\|_{\L(X)} \\ \ecart
&& \dis \times \left\| \sqrt{-A_{k/2}} \left(- A_{k/2} + i \sqrt{-\lambda-\frac{k^2}{4}}\, I\right)^{-\frac{1}{2}} \right\|_{\L(X)} \\ \ecart
&& \dis + \sqrt{\left|\lambda + \frac{k^2}{4}\right|} \left\| \left(- A_{k/2} - i \sqrt{-\lambda-\frac{k^2}{4}}\, I\right)^{-\frac{1}{2}} \right\|_{\L(X)}\\ \ecart
&& \dis \times \left\|\left(- A_{k/2} + i \sqrt{-\lambda-\frac{k^2}{4}}\, I\right)^{-\frac{1}{2}}\right\|_{\L(X)}. 
\end{array}$$
Thus, due to $(\H_2)$ and \refR{Rem type phi}, we can apply \cite{dore-yakubov}, Lemma 2.6 statement a), p. 104: there exist $C_1,C_2 > 0$, which are independent of $\lambda$, such that
\begin{equation}\label{ineg C1} 
\left\|\sqrt{-A_{k/2}} \left(- A_{k/2} \pm i \sqrt{-\lambda-\frac{k^2}{4}}\, I\right)^{-\frac{1}{2}} \right\|  \leqslant C_1,
\end{equation}
and
\begin{equation}\label{ineg C2}
\left\| \left(- A_{k/2} \pm i \sqrt{-\lambda-\frac{k^2}{4}}\, I\right)^{-\frac{1}{2}} \right\|_{\L(X)} \leqslant \frac{C_2}{\sqrt{\sqrt{\left|\lambda + \frac{k^2}{4}\right|}}}.
\end{equation}
Therefore, \eqref{ineg C1} and \eqref{ineg C2} give us the expected estimate for $\|M_\lambda L_\lambda^{-1}\|$. In the same way, replacing $M_\lambda$ by $L_\lambda$ and $L_\lambda$ by $M_\lambda$, we obtain the symmetric result for $\|L_\lambda
M_\lambda^{-1}\|$.
\end{proof}
Now, we adapt a useful technical lemma from \cite{dore-yakubov}.
\begin{Lem}[\cite{dore-yakubov}]\label{Lem DY}
Let $\dis\lambda \in -\frac{k^2}{4} + \left(\CC \setminus \overline{S_{2\theta_A}}\right)$ and assume that $(\H_1)$, $(\H_2)$, $(\H_3)$ and $(\H_4)$ hold. Then, the two analytic semigroups $\left(e^{-t \sqrt{-P_\lambda}}\right)_{t\geqslant 0}$ and $\left(e^{-t \sqrt{-Q_\lambda}}\right)_{t \geqslant 0}$, are well defined. 

Moreover, let $\alpha \in \RR$ and fix $t_0 > 0$, there exist $K > 0$ and $\tilde{\omega} > 0$, such that for any $\dis\lambda \in -\frac{k^2}{4} + \left(\CC \setminus \overline{S_{2\theta_A}}\right)$, we have 
$$\left\|\left(-P_\lambda\right)^\alpha e^{-t_0 \sqrt{-P_\lambda}}\right\|_{\L(X)} \leqslant K e^{-t_0 \tilde{\omega} \left|\lambda + \frac{k^2}{4}\right|^{1/4}} \quad \text{and} \quad \left\|\left(-Q_\lambda\right)^\alpha e^{-t_0 \sqrt{-Q_\lambda}}\right\|_{\L(X)} \leqslant K e^{-t_0 \tilde{\omega} \left|\lambda + \frac{k^2}{4}\right|^{1/4}}.$$ 
\end{Lem}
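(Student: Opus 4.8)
The plan is to settle the well-definedness of the two semigroups first, and then to obtain the exponential estimate by reducing everything to uniform resolvent bounds for the fixed sectorial operator $-A_{k/2}$.

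First I would establish that the semigroups are well defined. By \refL{Lem PQ BIP}, $-P_\lambda \in \text{BIP}(X,\theta_1)$ and $-Q_\lambda \in \text{BIP}(X,\theta_2)$ with $\theta_1,\theta_2 \in [0,\pi-\theta_A)$. Since $\theta_A < \pi/2$ by $(\H_3)$, \cite{haase}, Proposition 3.2.1, e), p. 71 (the same argument used in \refR{Rq1}) yields $\sqrt{-P_\lambda} \in \text{BIP}(X,\theta_1/2)$ and $\sqrt{-Q_\lambda} \in \text{BIP}(X,\theta_2/2)$ with $\theta_1/2,\theta_2/2 < (\pi-\theta_A)/2 < \pi/2$. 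Hence $-\sqrt{-P_\lambda} = M_\lambda$ and $-\sqrt{-Q_\lambda} = L_\lambda$ generate bounded analytic semigroups, and invertibility (i.e. $0 \in \rho(P_\lambda) \cap \rho(Q_\lambda)$) is already furnished by \refL{Lem LM^-1}. This proves the first assertion, and by symmetry it suffices to treat $P_\lambda$.

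For the quantitative estimate the crucial point is a lower bound on the spectrum. Writing $w_\lambda := i\sqrt{-\lambda - k^2/4}$, so that $|w_\lambda| = |\lambda + k^2/4|^{1/2}$, the identity $(-P_\lambda - zI)^{-1} = (-A_{k/2} - (z+w_\lambda)I)^{-1}$ reduces every resolvent of $-P_\lambda$ to a resolvent of the fixed operator $-A_{k/2}$. Using the sectoriality of $-A_{k/2}$ together with the $\lambda$-independent bounds of \cite{dore-yakubov}, Lemma 2.6, p. 104 (already invoked in the proof of \refL{Lem LM^-1}) and the angular separation supplied by \refL{Lem sect} and \eqref{condition parabolicité}, I would show that $-P_\lambda$ is sectorial of a fixed angle $\leq \pi - \theta_A$ with sectoriality constant independent of $\lambda$, and that $\text{dist}(0,\sigma(-P_\lambda)) \geq c\,|\lambda+k^2/4|^{1/2}$ with $c>0$ independent of $\lambda$. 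Taking square roots, $\sqrt{-P_\lambda}$ is uniformly sectorial of angle $\leq (\pi-\theta_A)/2 < \pi/2$ with $\text{dist}\bigl(0,\sigma(\sqrt{-P_\lambda})\bigr) \geq c'\,|\lambda+k^2/4|^{1/4}$; equivalently, the bounded analytic semigroup $\bigl(e^{-t\sqrt{-P_\lambda}}\bigr)_{t\geqslant 0}$ has growth bound $\leq -\tilde\omega\,|\lambda+k^2/4|^{1/4}$ for some $\tilde\omega>0$ uniform in $\lambda$. To conclude, I would use analyticity through the factorisation
\begin{equation*}
(-P_\lambda)^\alpha e^{-t_0\sqrt{-P_\lambda}} = \left( (-P_\lambda)^\alpha e^{-\frac{t_0}{2}\sqrt{-P_\lambda}} \right) e^{-\frac{t_0}{2}\sqrt{-P_\lambda}},
\end{equation*}
recalling $(-P_\lambda)^\alpha = \bigl(\sqrt{-P_\lambda}\bigr)^{2\alpha}$: the first factor is bounded uniformly in $\lambda$ by the standard smoothing estimate for uniformly sectorial generators ($t_0$ being fixed; for $\alpha<0$ one simply uses boundedness of $(-P_\lambda)^\alpha$), and the second by $K\,e^{-\frac{t_0}{2}\tilde\omega\,|\lambda+k^2/4|^{1/4}}$ from the growth bound, whence the claim after renaming $\tilde\omega$.

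The hard part is the uniformity of all constants in $\lambda$, especially as $\arg\bigl(-\lambda-k^2/4\bigr)$ approaches the boundary of the admissible region, where the BIP angles $\theta_1,\theta_2$ tend to $\pi-\theta_A$ and a naive sectoriality constant would degenerate. This is exactly what the reduction to resolvents of the single operator $-A_{k/2}$ and the $\lambda$-independent estimates of \cite{dore-yakubov}, Lemma 2.6, are designed to control, as in the proof of \refL{Lem LM^-1}.
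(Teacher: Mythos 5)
Your treatment of the first assertion (well-definedness of the two semigroups) is the same as the paper's: halve the BIP angles of $-P_\lambda,-Q_\lambda$ via Haase's result on square roots, observe that the resulting angles are $<\pi/2$, and use \refL{Lem LM^-1} for invertibility.

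For the quantitative estimate you take a genuinely different route. The paper's proof is essentially one line: after writing $-P_\lambda=-A_{k/2}-i\sqrt{-\lambda-k^2/4}\,I$ (and $-Q_\lambda=-A_{k/2}+i\sqrt{-\lambda-k^2/4}\,I$), the asserted inequality \emph{is} statement b) of \cite{dore-yakubov}, Lemma 2.6, applied to the fixed operator $-A_{k/2}$ with the parameter $\mu=\mp i\sqrt{-\lambda-k^2/4}$, followed by the computation $\sqrt{|\mu|}=\left|\lambda+k^2/4\right|^{1/4}$; all uniformity in $\lambda$ is delegated to that citation. You instead reconstruct this semigroup estimate from the resolvent-type bounds of statement a) (the ones already used in \refL{Lem LM^-1}), through uniform sectoriality, a spectral lower bound, and a contour/factorization argument --- in effect re-proving Dore--Yakubov's Lemma 2.6 b). That is a more self-contained and more instructive argument, at the price of redoing work the paper simply cites; the factorization $(-P_\lambda)^{\alpha}e^{-t_0\sqrt{-P_\lambda}}=\bigl((-P_\lambda)^{\alpha}e^{-\frac{t_0}{2}\sqrt{-P_\lambda}}\bigr)e^{-\frac{t_0}{2}\sqrt{-P_\lambda}}$ and the treatment of $\alpha<0$ are fine given uniform sectoriality.

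There is, however, one step that is not valid as you state it. You pass from ``uniform sectoriality of angle $<\pi/2$ plus $\mathrm{dist}\bigl(0,\sigma(\sqrt{-P_\lambda})\bigr)\geqslant c'\,|\lambda+k^2/4|^{1/4}$'' to the growth bound $-\tilde\omega\,|\lambda+k^2/4|^{1/4}$, treating the two as equivalent. In a general Banach space the growth bound of an analytic semigroup is \emph{not} controlled by the distance from the origin to the spectrum: resolvent norms can be arbitrarily large at points far from the spectrum, and sectorial estimates valid outside the sector do not propagate to the region between the origin and the spectrum, which is exactly where the shifted integration contour must pass. Moreover, the intermediate claim $\mathrm{dist}\bigl(0,\sigma(-P_\lambda)\bigr)\geqslant c\,|\lambda+k^2/4|^{1/2}$ does not follow from the spectral inclusion $\sigma(-A_{k/2})\subset\overline{S_{\theta_A}}$ alone: when $\arg\bigl(i\sqrt{-\lambda-k^2/4}\bigr)$ approaches $\theta_A$ or $\pi-\theta_A$, the translated point can come angularly close to the spectral sector, and a lower bound proportional to $|\lambda+k^2/4|^{1/2}$ is only available as a consequence of a uniform \emph{resolvent} estimate, not of spectral location. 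The repair stays inside your own toolkit: replace ``spectral distance'' throughout by the uniform bounds $\|(\sqrt{-P_\lambda})^{-1}\|\leqslant C\,|\lambda+k^2/4|^{-1/4}$ and $\|(z-\sqrt{-P_\lambda})^{-1}\|\leqslant C/|z|$ on the contour, which follow from \eqref{ineg C2} and the translation identity (i.e.\ from Dore--Yakubov Lemma 2.6 a)); with resolvent bounds in hand, the Neumann-series extension near the origin and the contour shift do yield the exponential decay, and your argument closes.
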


\begin{proof}
Let $\dis\lambda \in -\frac{k^2}{4} + \left(\CC \setminus \overline{S_{2\theta_A}}\right)$. From \refL{Lem PQ BIP} and \cite{haase}, Proposition 3.1.2, p. 63, we deduce that
$$\sqrt{-P_\lambda} \in \text{BIP}\,(X,\theta_1/2) \quad \text{and} \quad \sqrt{-Q_\lambda} \in \text{BIP}\,(X,\theta_2/2),$$
with $\theta_1/2,\theta_2/2 \in [0, \pi/2)$. Then, we deduce that $-\sqrt{-P_\lambda}$ and $-\sqrt{-Q_\lambda}$ are the infinitesimal generators of the two analytic semigroups 
$$\left(e^{-t \sqrt{-P_\lambda}}\right)_{t\geqslant 0} = \left(e^{-t \sqrt{-A_{k/2} - i\sqrt{-\lambda-\frac{k^2}{4}}I}}\right)_{t\geqslant 0},$$
and
$$\left(e^{-t \sqrt{-Q_\lambda}}\right)_{t\geqslant 0} = \left(e^{-t \sqrt{-A_{k/2} + i \sqrt{-\lambda-\frac{k^2}{4}}I}}\right)_{t \geqslant 0}.$$
Moreover, let $\alpha \in \RR$ and fix $t_0>0$, from \cite{dore-yakubov}, Lemma~2.6, statement b), p. 104, we deduce that there exist $K \geqslant 1$ and $\tilde{\omega} > 0$, such that 
$$\begin{array}{lll}
\dis \left\|(-P_\lambda)^\alpha e^{-t_0\sqrt{-P_\lambda}} \right\|_{\L(X)} & = & \dis \left\|\left(- A_{k/2} - i \sqrt{-\lambda-\frac{k^2}{4}}I\right)^\alpha e^{-t_0 \sqrt{-A_{k/2} - i \sqrt{-\lambda-\frac{k^2}{4}}I}}\right\|_{\L(X)} \\ \ecart
& \leqslant & \dis K e^{-t_0 \tilde{\omega} \sqrt{\left|i\sqrt{-\lambda-\frac{k^2}{4}}\right|}},
\end{array}$$
and
$$\begin{array}{lll}
\dis \left\|(-Q_\lambda)^\alpha e^{-t_0\sqrt{-Q_\lambda}} \right\|_{\L(X)} & = & \dis \left\|\left(- A_{k/2} + i \sqrt{-\lambda-\frac{k^2}{4}}I\right)^\alpha e^{-t_0 \sqrt{-A_{k/2} + i \sqrt{-\lambda-\frac{k^2}{4}}I}}\right\|_{\L(X)} \\ \ecart
& \leqslant & \dis K e^{-t_0 \tilde{\omega} \sqrt{\left|i\sqrt{-\lambda-\frac{k^2}{4}}\right|}},
\end{array}$$
with
$$K e^{-t_0 \tilde{\omega} \sqrt{\left|i\sqrt{-\lambda-\frac{k^2}{4}}\right|}} = K e^{-t_0 \tilde{\omega} \sqrt{\sqrt{\left|\lambda + \frac{k^2}{4}\right|}}} = K e^{-t_0 \tilde{\omega} \left|\lambda + \frac{k^2}{4}\right|^{1/4}}.$$
\end{proof}
\begin{Prop}\label{Prop rho(-Ai)}
Let $i=1,2,5$ and assume that $(\H_1)$, $(\H_2)$, $(\H_3)$, $(\H_4)$ hold. Then, we have
$$\{0\} \cup \left\{-\frac{k^2}{4}\right\} \cup \left(-\frac{k^2}{4} + \left(\CC \setminus \overline{S_{2\theta_A}}\right)\right) \subset \rho(-\A_i).$$
\end{Prop}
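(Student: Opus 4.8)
The plan is to reduce, for each fixed $\lambda$ in the stated set and each $i\in\{1,2,5\}$, the membership $\lambda\in\rho(-\A_i)$ to the unique solvability in $D(\A_i)$ of the equation \eqref{EDA} supplemented with the homogeneous conditions (BCi)$_0$, and then to recognize this fourth order problem as an instance of \eqref{eq de base} from \refs{sect linear steady case}. First I would record the algebraic identities $P_\lambda+Q_\lambda=2A-kI$ and $P_\lambda Q_\lambda=A^2-kA-\lambda I$, so that \eqref{EDA} is precisely \eqref{eq de base} with $P=P_\lambda$ and $Q=Q_\lambda$. Since $P_\lambda$ and $Q_\lambda$ are scalar shifts of $A_{k/2}$, we have $D(P_\lambda)=D(Q_\lambda)=D(A)$ and $D(P_\lambda Q_\lambda)=D(A^2)$, so the classical-solution space of \refs{sect linear steady case} coincides exactly with $D(\A_i)$.

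For $\lambda\in-\frac{k^2}{4}+\left(\CC\setminus\overline{S_{2\theta_A}}\right)$ I would verify $(H_1)$–$(H_5)$ for the pair $(P_\lambda,Q_\lambda)$: $(H_1)$ is $(\H_1)$; $(H_2)$ is the statement $0\in\rho(P_\lambda)\cap\rho(Q_\lambda)$ of \refL{Lem LM^-1}; $(H_3)$ holds because both operators are functions of $A_{k/2}$, hence share the domain $D(A)$ and commute in the resolvent sense; $(H_4)$ is exactly \refL{Lem PQ BIP}; and $(H_5)$ holds with the bounded invertible operator $B=2i\sqrt{-\lambda-\frac{k^2}{4}}\,I$, which is the only place where $\lambda\neq-\frac{k^2}{4}$ is used. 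Then \refT{Th Carre} (statement~1 for $i=1$, statement~2 for $i=5$, after noting that (BC5)$_0$ collapses to (BC1)$_0$ once $u(a)=u(b)=0$) and statement~1 of \refT{thmprinc gen} (for $i=2$, which requires only $(H_1)$–$(H_5)$) produce, with the trivially admissible data $\varphi_j=0$, a unique classical solution of \eqref{eq de base}-(BCi); that is a unique $u\in D(\A_i)$ with $(-\A_i-\lambda I)u=f$, so $\lambda\in\rho(-\A_i)$.

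Two distinguished values lie outside this regime and must be handled separately. At $\lambda=0$ (a genuine extra case when $k\neq0$) the factorization degenerates to $P_0=A$, $Q_0=A-kI$ and $B=kI$, which is the product situation of \refR{Rq1}; here $(H_2)$ and $(H_4)$ follow from $(\H_2)$, $(\H_4)$ together with the BIP perturbation arguments already used in \refL{Lem PQ BIP} applied to the real shift $A-kI$, after which \refT{Th Carre} and \refT{thmprinc gen} again apply verbatim. At the double root $\lambda=-\frac{k^2}{4}$ one has $P_\lambda=Q_\lambda=A_{k/2}$, so $(H_5)$ fails and \refs{sect linear steady case} is unavailable; I would instead factor \eqref{EDA} as $\left(\frac{d^2}{dx^2}+A_{k/2}\right)^2u=f$ and solve it in two successive second order stages, $w''+A_{k/2}w=f$ followed by $u''+A_{k/2}u=w$, each well posed because $-A_{k/2}$ is sectorial of angle $\theta_A<\pi/2$ and hence $-\sqrt{-A_{k/2}}$ generates a bounded analytic semigroup.

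I expect this last point to be the main obstacle. For (BC1)$_0$ and (BC5)$_0$ the substitution $w=u''+A_{k/2}u$ turns the conditions $u(a)=u(b)=0$, $u''(a)=u''(b)=0$ into clean Dirichlet conditions $w(a)=w(b)=0$, so the two-stage solve goes through directly and yields the required regularity $u\in W^{4,p}(a,b;X)\cap L^p(a,b;D(A^2))$ with $u''\in L^p(a,b;D(A))$. For (BC2)$_0$, however, the condition $u''(a)+Au(a)=0$ couples the two stages at the endpoints, producing $w(a)=-\frac{k}{2}u(a)$ rather than a condition on $w$ alone; the decoupling then fails and one must instead run the explicit representation on the repeated-exponential basis $e^{(x-a)M}$, $(x-a)e^{(x-a)M}$ (with $M=-\sqrt{-A_{k/2}}$) and its $b$-side analogues, checking invertibility of the resulting $4\times4$ operator boundary matrix, in which the mismatch between the boundary operator $A$ and $A_{k/2}$ contributes only bounded scalar shifts.
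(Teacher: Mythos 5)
Your treatment of the generic case $\lambda \in -\frac{k^2}{4} + \left(\CC \setminus \overline{S_{2\theta_A}}\right)$ is exactly the paper's argument: check $(H_1)$--$(H_5)$ for the pair $(P_\lambda,Q_\lambda)$ with $B_\lambda = 2i\sqrt{-\lambda-\frac{k^2}{4}}\,I$, then invoke \refT{Th Carre} for $i=1,5$ and statement 1 of \refT{thmprinc gen} for $i=2$ (which indeed needs no $(H_6)$). Your handling of $\lambda = 0$, $k\neq 0$, with $P=A$, $Q=A-kI$, $B=kI$, is also sound and is what the paper does by citing \cite{LMMT}, Theorems 2.2 and 2.5, which treat precisely this pair; the real-shift BIP fact you need there is \cite{arendt-bu-haase}, Theorem 2.3, already used in \refL{Lem PQ BIP}, and the sectoriality of $-A+kI$ is exactly where $(\H_4)$ enters.

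The gap is the double root $\lambda = -\frac{k^2}{4}$ (with $k\neq 0$) for $i=2$. Your two-stage factorization does settle (BC1)$_0$ and (BC5)$_0$ — the paper instead quotes \cite{thorel}, Theorem 2.6, which is this same decoupled argument carried out once and for all. But for (BC2)$_0$ your proposal stops at a plan: you correctly observe that $w = u'' + A_{k/2}u$ satisfies $w(a) = -\frac{k}{2}u(a)$ and $w(b) = -\frac{k}{2}u(b)$, so the two stages do not decouple, and you then assert that one should check invertibility of a $4\times 4$ operator boundary matrix on the repeated-exponential basis $e^{(x-a)M}$, $(x-a)e^{(x-a)M}$, $e^{(b-x)M}$, $(b-x)e^{(b-x)M}$. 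That invertibility is the entire difficulty at a double root: it is what replaces the invertibility of $U$ and $V$ in $(H_6)$ once $P_\lambda = Q_\lambda$ and $(H_5)$ fails, and nothing in your proposal establishes it. The paper closes this case by citing \cite{thorel}, Theorem 2.8; note that the mismatch you identify between the boundary operator $A$ in (BC2)$_0$ and the equation operator $A_{k/2}$ is genuine, so that a verbatim application of a result stated for the boundary condition $u''+A_{k/2}u$ does not cover (BC2)$_0$ either — an additional perturbation argument absorbing the terms $\frac{k}{2}u(a)$, $\frac{k}{2}u(b)$ is required. Since your proof must contain that argument (or the invertibility computation) explicitly, the case $i=2$, $\lambda=-\frac{k^2}{4}$ remains open in your write-up, and the proposition is only proved for $i=1,5$ together with the remaining values of $\lambda$.
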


\begin{proof}\hfill
\begin{itemize}
\item If $\lambda=0$, we obtain, for $k\neq 0$, that $0\in \rho(\A_i)$, from \cite{LMMT}, Theorem 2.2, p. 355 and Theorem~2.5, p. 356-357, by taking 
$$\left\{\begin{array}{llll}
P_\lambda=A-kI & \text{and} & Q_\lambda=A,    & \text{ if }k > 0, \\ \ecart
P_\lambda=A    & \text{and} & Q_\lambda=A-kI, & \text{ if }k < 0,
\end{array}\right.$$  
Moreover, if $k=0$, from \cite{thorel}, Theorem~2.6 and Theorem~2.8, then $0\in \rho(\A_i)$.

\item If $\dis\lambda = -\frac{k^2}{4}$, then from \refL{Lem PQ BIP}, $-P_{-k^2/4} = -Q_{-k^2/4} = -A_{k/2} \in$ BIP\,$(X,\theta)$, where $\theta = \max(\theta_1,\theta_2)$. Thus, from \cite{thorel}, Theorem~2.6 and Theorem~2.8, we have $-k^2/4 \in \rho(-\A_i)$.

\item Let $\dis\lambda \in -\frac{k^2}{4} + \left(\CC \setminus \overline{S_{2\theta_A}}\right)$. Then, using $P_\lambda$ and $Q_\lambda$ defined by \eqref{PQ}, we obtain that 
\begin{equation}\label{B lambda}
B_\lambda := 2i\sqrt{-\lambda-\frac{k^2}{4}} I \in \L(X),
\end{equation}
is invertible with bounded inverse. Moreover, we have $P_\lambda = Q_\lambda + B_\lambda$. 

We are now in position to apply the results of \refs{sect linear steady case} with $P$, $Q$ replaced by $P_\lambda$, $Q_\lambda$. From \refL{Lem PQ BIP}, it follows that assumptions $(H_1)$, $(H_2)$, $(H_3)$, $(H_4)$ and $(H_5)$ of \refs{sect hyp gen} are satisfied. Then, from \refT{Th Carre} and \refT{thmprinc gen}, there exists a unique classical solution of \eqref{EDA}-(BCi)$_0$. Thus, we deduce that have $\lambda \in \rho(-\A_i)$. 
\end{itemize}
\end{proof}

\begin{Prop}\label{Prop rho(-Ai)2}
Let $i=3,4$ and assume that $(\H_1)$, $(\H_2)$, $(\H_3)$, $(\H_4)$ hold. Then, there exists $r > 0$, such that
$$\left\{-\frac{k^2}{4}\right\} \cup \left(-\frac{k^2}{4} + \left(\CC \setminus \left( \overline{B\left(0,r\right)} \cup \overline{S_{2\theta_A}}\right)\right) \right) \subset \rho(-\A_i).$$
Moreover, if in addition, we assume $(\H_5)$, we obtain $\dis 0 \in \rho(-\A_i)$.
\end{Prop}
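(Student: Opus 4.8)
The plan is to follow the architecture of the proof of \refP{Prop rho(-Ai)}, the only genuinely new point being that, for $i\in\{3,4\}$, invoking \refT{thmprinc gen} for the boundary conditions \eqref{condbord3}--\eqref{condbord4} requires the supplementary hypothesis $(H_6)$ of \refs{sect hyp gen}. So the crux is to secure $(H_6)$ for the operators attached to the relevant $\lambda$. First I would fix $i\in\{3,4\}$ and, for $\lambda\in -k^2/4+(\CC\setminus\overline{S_{2\theta_A}})$, introduce $P_\lambda$, $Q_\lambda$ as in \eqref{PQ}, the bounded invertible operator $B_\lambda=2i\sqrt{-\lambda-k^2/4}\,I$ as in \eqref{B lambda}, and $M_\lambda=-\sqrt{-P_\lambda}$, $L_\lambda=-\sqrt{-Q_\lambda}$. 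Exactly as in the last bullet of \refP{Prop rho(-Ai)}, \refL{Lem PQ BIP} shows that hypotheses $(H_1)$--$(H_5)$ hold with $P=P_\lambda$, $Q=Q_\lambda$, $B=B_\lambda$. It then remains to produce $r>0$ such that $(H_6)$ holds whenever $|\lambda+k^2/4|>r$; by item~5 of \refR{Rq1} it suffices to force the norms of
\[
T^-_\lambda=e^{c(L_\lambda+M_\lambda)}+B_\lambda^{-1}(L_\lambda+M_\lambda)^2\bigl(e^{cM_\lambda}-e^{cL_\lambda}\bigr),\qquad T^+_\lambda=e^{c(L_\lambda+M_\lambda)}-B_\lambda^{-1}(L_\lambda+M_\lambda)^2\bigl(e^{cM_\lambda}-e^{cL_\lambda}\bigr)
\]
(the operators of \eqref{U V gen}) strictly below $1$.

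The estimates would come from \refL{Lem DY} and \refL{Lem LM^-1}. Since $L_\lambda$ and $M_\lambda$ commute, $e^{c(L_\lambda+M_\lambda)}=e^{cL_\lambda}e^{cM_\lambda}$, whence $\|e^{c(L_\lambda+M_\lambda)}\|_{\L(X)}\leqslant K^2 e^{-2c\tilde\omega|\lambda+k^2/4|^{1/4}}$ by \refL{Lem DY} with $\alpha=0$, $t_0=c$. For the second term I would factor, using commutativity,
\[
(L_\lambda+M_\lambda)^2 e^{cM_\lambda}=\bigl(I+L_\lambda M_\lambda^{-1}\bigr)^2 M_\lambda^2 e^{cM_\lambda},\qquad (L_\lambda+M_\lambda)^2 e^{cL_\lambda}=\bigl(I+M_\lambda L_\lambda^{-1}\bigr)^2 L_\lambda^2 e^{cL_\lambda},
\]
bound $\bigl(I+L_\lambda M_\lambda^{-1}\bigr)^2$ and $\bigl(I+M_\lambda L_\lambda^{-1}\bigr)^2$ uniformly in $\lambda$ via \refL{Lem LM^-1}, and bound $\|M_\lambda^2 e^{cM_\lambda}\|=\|(-P_\lambda)e^{-c\sqrt{-P_\lambda}}\|$ and its $L_\lambda$-analogue by \refL{Lem DY} with $\alpha=1$. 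As $\|B_\lambda^{-1}\|_{\L(X)}=(2\sqrt{|\lambda+k^2/4|})^{-1}$, the whole second term is $O\!\bigl(|\lambda+k^2/4|^{-1/2}e^{-c\tilde\omega|\lambda+k^2/4|^{1/4}}\bigr)$. Both contributions tend to $0$ as $|\lambda+k^2/4|\to+\infty$, so there is $r>0$ with $\|T^-_\lambda\|_{\L(X)}<1$ and $\|T^+_\lambda\|_{\L(X)}<1$ for $|\lambda+k^2/4|>r$; hence $(H_6)$ holds, \refT{thmprinc gen} yields a unique classical solution of \eqref{EDA}--(BCi)$_0$, and $\lambda\in\rho(-\A_i)$. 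The point $\lambda=-k^2/4$ is the degenerate case $P_\lambda=Q_\lambda=A_{k/2}$, $B_\lambda=0$, to which the general theory does not apply; since $-A_{k/2}\in\text{BIP}$, I would handle it as in \refP{Prop rho(-Ai)} by citing \cite{thorel}.

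For the last assertion, under $(\H_5)$ I would prove $0\in\rho(-\A_i)$. If $k=0$ then $0=-k^2/4$ and the point has just been treated. If $k\neq0$, at $\lambda=0$ one has $B_0=-|k|I$, a nonzero scalar, and $\{P_0,Q_0\}=\{A,A-kI\}$; hypotheses $(\H_2)$, $(\H_4)$ and $(\H_5)$ ensure that $-P_0,-Q_0\in\text{Sect}(0)$ with $0$ in their resolvent sets. Here $|\lambda+k^2/4|=k^2/4$ is fixed, so the perturbative estimate above is unavailable; instead I would verify $(H_6)$ directly through the holomorphic functional calculus of the sectorial operator $-A$, the mechanism pointed to in item~6 of \refR{Rq1} and carried out in \cite{LMMT}. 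Because $B_0$ is scalar, the operators $U_0$ and $V_0$ of \eqref{U V gen} are of the form $g(-A)$ for explicit scalar symbols $g$ which do not vanish on $\sigma(-A)\subset[0,+\infty)$, hence are invertible; then \refT{thmprinc gen} applies and gives $0\in\rho(-\A_i)$.

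The step I expect to be the main obstacle is the uniform verification of $(H_6)$ for large $|\lambda|$: controlling $(L_\lambda+M_\lambda)^2(e^{cM_\lambda}-e^{cL_\lambda})$ demands combining the uniform boundedness of $L_\lambda M_\lambda^{-1}$ from \refL{Lem LM^-1} with the decay of $(-P_\lambda)^\alpha e^{-c\sqrt{-P_\lambda}}$ from \refL{Lem DY} in the right order, while checking that every constant is truly independent of $\lambda$. The functional-calculus verification of $(H_6)$ at the resonant point $\lambda=0$ is the secondary difficulty, precisely because it is inaccessible to the perturbative bound.
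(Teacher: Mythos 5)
Your proposal is correct and follows essentially the same route as the paper: the paper likewise reduces everything to checking $(H_6)$ for $U_\lambda, V_\lambda$, and bounds $M_{B_\lambda}=\max\left(\left\|T_\lambda^-\right\|_{\L(X)},\left\|T_\lambda^+\right\|_{\L(X)}\right)$ by combining \refL{Lem DY} (with $t_0=c$), the uniform bound $\left\|(L_\lambda+M_\lambda)^2M_\lambda^{-2}\right\|_{\L(X)}\leqslant\left(1+\left\|L_\lambda M_\lambda^{-1}\right\|_{\L(X)}\right)^2\leqslant C$ from \refL{Lem LM^-1}, and $\left\|B_\lambda^{-1}\right\|_{\L(X)}\leqslant\left(2\sqrt{\left|\lambda+\frac{k^2}{4}\right|}\right)^{-1}$, so that $M_{B_\lambda}\leqslant\frac{1}{2}$ once $\left|\lambda+\frac{k^2}{4}\right|\geqslant r$. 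The only cosmetic difference is at the end: where you propose to re-run the functional-calculus verification of $(H_6)$ at $\lambda=0$ (and at $\lambda=-\frac{k^2}{4}$ to cite \cite{thorel}), the paper simply invokes \cite{LMMT}, Theorems 2.2 and 2.5, whose proofs rest on exactly the mechanism you describe.
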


\begin{proof}
As in the proof of \refP{Prop rho(-Ai)}, assumptions $(H_1)$, $(H_2)$, $(H_3)$, $(H_4)$ and $(H_5)$ of \refs{sect hyp gen} are satisfied and also $\dis -\frac{k^2}{4} \in \rho(-\A_i)$. 

Now, we adapt notations of \refs{sect hyp gen}, replacing $L$, $M$, $U$, $V$, $T^-$, $T^+$, $B$ and $M_B$ by $L_\lambda$, $M_\lambda$, $U_\lambda$, $V_\lambda$, $T_{\lambda}^-$, $T_{\lambda}^+$, $B_\lambda$ and $M_{B_\lambda}$. Our aim is to show that $(H_6)$ of \refs{sect hyp gen} holds with $U_\lambda$ and $V_\lambda$ instead of $U$ and $V$. To this end, we recall that $c = b-a > 0$. From \refL{Lem DY}, for $t_0 = c > 0$, there exist $K \geqslant 1$ and some $\tilde{\omega} > 0$, such that, for any $\dis\lambda \in -\frac{k^2}{4} + \left(\CC \setminus \overline{S_{2\theta_A}}\right)$, we have
$$\left\{\begin{array}{lllll}
\dis\|M_\lambda^2 e^{cM_\lambda}\|_{\L(X)} &=&\dis \|-P_\lambda e^{-c\sqrt{-P_\lambda}}\|_{\L(X)} & \leqslant & \dis K e^{-c \tilde{\omega} \left|\lambda + \frac{k^2}{4}\right|^{1/4}} \\ \ecart

\dis\|L_\lambda^2 e^{cL_\lambda}\|_{\L(X)} &=&\dis \|-Q_\lambda e^{-c\sqrt{-Q_\lambda}}\|_{\L(X)} & \leqslant & \dis K e^{-c \tilde{\omega} \left|\lambda + \frac{k^2}{4}\right|^{1/4}},
\end{array}\right.$$
hence
$$\begin{array}{lll}
\dis \|e^{c(L_\lambda+M_\lambda)}\|_{\L(X)}  =  \|e^{-c(\sqrt{-Q_\lambda}+\sqrt{-P_\lambda})}\|_{\L(X)} & = & \dis \|e^{-c\sqrt{-Q_\lambda}} e^{-c\sqrt{-P_\lambda}}\|_{\L(X)} \\ \ecart
& \leqslant & \dis \|e^{-c\sqrt{-Q_\lambda}}\|_{\L(X)} \|e^{-c\sqrt{-P_\lambda}}\|_{\L(X)} \\ \ecart
& \leqslant & \dis K^2 e^{-2 c \tilde{\omega} \left|\lambda + \frac{k^2}{4}\right|^{1/4}},
\end{array}$$
where $L_\lambda$ and $M_\lambda$ are defined by \eqref{LM}. Moreover, from \refL{Lem LM^-1}, there exists $C > 0$, which is independent of $\lambda$, such that
$$\left\{\begin{array}{lllll}
\dis\left\|\left(L_\lambda+M_\lambda\right)^{2} M_\lambda^{-2} \right\|_{\L(X)} &  \leqslant & \dis \left\|L_\lambda M_\lambda^{-1}\right\|^2_{\L(X)} + 2\left\|L_\lambda M_\lambda^{-1}\right\| _{\L(X)} + 1 & \leqslant & C \\ \ecart
\dis\left\|\left(L_\lambda+M_\lambda\right)^{2} L_\lambda^{-2} \right\|_{\L(X)} & \leqslant & \dis \left\|M_\lambda L_\lambda^{-1}\right\|_{\L(X)}^2 + 2\left\|M_\lambda L_\lambda^{-1}\right\|_{\L(X)} + 1 & \leqslant & C.
\end{array}\right.$$
Furthermore, from \eqref{B lambda}, we have
\begin{equation}\label{estim B-1}
\left\|B_\lambda^{-1}\right\|_{\L(X)} \leqslant \frac{1}{2\sqrt{\left|\lambda + \frac{k^2}{4}\right|}}.
\end{equation}
Then, due to \eqref{estim B-1}, we obtain 
$$\begin{array}{lll}
M_{B_\lambda} & = & \max\left(\left\|T_{\lambda}^-\right\|_{\L(X)}, \left\|T_{\lambda}^+\right\|_{\L(X)} \right) \\ \ecart

&\leqslant &\dis ~~\,\left\Vert e^{c \left( L_\lambda +M_\lambda\right) }\right\Vert _{\L(X)} + \left\Vert B_\lambda^{-1}\left( L_\lambda +M_\lambda \right)^{2}M_\lambda^{-2}\right\Vert _{\mathcal{L}\left( X\right) }\left\Vert M_\lambda^{2}e^{c M_\lambda}\right\Vert _{\mathcal{L}\left( X\right) } \\ \ecart
&&+\left\Vert B_\lambda^{-1}\left( L_\lambda+M_\lambda\right) ^{2}L_\lambda^{-2}\right\Vert _{\mathcal{L}(X)}\left\Vert L_\lambda^{2}e^{c L_\lambda}\right\Vert _{\mathcal{L}\left( X\right) } \\ \\

&\leqslant &\dis ~~\,\left\Vert e^{c \left( L_\lambda+M_\lambda\right) }\right\Vert _{\L(X)} +  \left\|B_\lambda^{-1}\right\|_{\L(X)}\left\Vert\left( L_\lambda+M_\lambda\right)^{2}M_\lambda^{-2}\right\Vert _{\mathcal{L}\left( X\right) }\left\Vert M_\lambda^{2}e^{c M_\lambda}\right\Vert _{\mathcal{L}\left( X\right) } \\ \ecart
&& + \left\|B_\lambda^{-1}\right\|_{\L(X)}\left\Vert\left( L_\lambda+M_\lambda\right) ^{2}L_\lambda^{-2}\right\Vert _{\mathcal{L}(X)}\left\Vert L_\lambda^{2}e^{c L_\lambda}\right\Vert _{\mathcal{L}\left( X\right) } \\ \\

&\leqslant &\dis ~~\,\left\Vert e^{c \left( L_\lambda+M_\lambda\right) }\right\Vert _{\L(X)} +  \left\|B_\lambda^{-1}\right\|_{\L(X)}\left( 1 + \left\Vert L_\lambda
M_\lambda^{-1}\right\Vert _{\mathcal{L}\left( X\right) }\right)^2 \left\Vert M_\lambda^{2}e^{c M_\lambda}\right\Vert _{\mathcal{L}\left( X\right) } \\ \ecart
&& + \left\|B_\lambda^{-1}\right\|_{\L(X)}\left( 1 + \left\Vert M_\lambda
L_\lambda^{-1}\right\Vert _{\mathcal{L}(X)}\right)^2 \left\Vert L_\lambda^{2}e^{c L_\lambda}\right\Vert _{\mathcal{L}\left( X\right) } \\ \\

& \leqslant & \dis K^2 e^{-2c \tilde{\omega} \left|\lambda + \frac{k^2}{4}\right|^{1/4}} + 
\frac{CK}{\sqrt{\left|\lambda + \frac{k^2}{4}\right|}}\, e^{-c \tilde{\omega} \left|\lambda + \frac{k^2}{4}\right|^{1/4}}.
\end{array}$$
Then, there exists $\dis r > 0$ such that, for any $\dis -\frac{k^2}{4} + \left(\CC \setminus \left(\overline{B\left(0,r\right)} \cup \overline{S_{2\theta_A}}\right) \right)$, we have
\begin{equation}\label{minoration lambda + k2/4}
\left|\lambda + \frac{k^2}{4}\right| \geqslant r > 0,
\end{equation}
and
$$M_{B_\lambda} = \max\left(\left\|T_{\lambda}^-\right\|_{\L(X)}, \left\|T_{\lambda}^+\right\|_{\L(X)} \right) \leqslant \frac{1}{2} < 1.$$
For such $\lambda$, we deduce that $U_\lambda = I - T_{\lambda}^-$ and $V_\lambda = I - T_{\lambda}^+$ are invertible with bounded inverse, with
\begin{equation}\label{estim U-1 V-1}
\|U_\lambda^{-1}\|_{\L(X)} \leqslant 2 \quad \text{and} \quad \|V_\lambda^{-1}\|_{\L(X)} \leqslant 2,
\end{equation} 
which involves that $(H_6)$ of \refs{sect hyp gen} holds.

Finally, from \refT{thmprinc gen}, there exists a unique classical solution of \eqref{EDA}-(BCi)$_0$. Hence 
$$-\frac{k^2}{4} + \left(\CC \setminus \left(\overline{B\left(0,r\right)} \cup \overline{S_{2\theta_A}}\right) \right) \subset \rho(-\A_i),$$ 
which gives the result.

If in addition, we assume $(\H_5)$, then from \cite{LMMT}, Theorem 2.2, p. 355 and Theorem~2.5, p.~356-357, we obtain that $0 \in \rho(-\A_i)$, $i=3,4$.
\end{proof}

\subsection{Norm estimates}\label{sect estim norme}

In this section, we focus on the norm estimates of the classical solution of problem \eqref{EDA}-(BCi)$_0$, for $i = 1,2,3,4,5$. To this end, we adapted the following technical useful results from \cite{FLMT}. 
\begin{Lem}[\cite{FLMT}]\label{Lem LMT} 
Assume $(\H_1)$, $(\H_2)$ and $(\H_3)$. Let $f\in L^p(a,b;X)$ with $1<p<+\infty$. Then, for all $\varphi \in (0, \pi-\theta_A)$, we have
\begin{enumerate}
\item $-A$ is of type $\varphi$,

\item For all $\mu \in \overline{S_\varphi} \subset \rho(-A)$ and all $x \in [a,b]$, we set
$$I_{\mu,f}(x) =  \int_a^x e^{-(x-s)\sqrt{-A+\mu I}} f(s)~ ds \quad \text{and} \quad J_{\mu,f}(x) = \int_x^b e^{-(s-x)\sqrt{-A+\mu I}} f(s)~ ds.$$
Then, for all $\mu \in \overline{S_\varphi}$, we have
$$\left\|I_{\mu,f}\right\|_{L^p(a,b;X)} \leqslant \frac{C}{\sqrt{1+|\mu|}}\,\left\|f\right\|_{L^p(a,b;X)} \quad \text{and} \quad \left\|J_{\mu,f}\right\|_{L^p(a,b;X)} \leqslant \frac{C}{\sqrt{1+|\mu|}}\,\left\|f\right\|_{L^p(a,b;X)},$$
where $C$ is a positive constant independent of $f$ and $\mu$.
\end{enumerate}

\end{Lem}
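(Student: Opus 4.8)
The plan for statement 1 is to invoke \refR{Rem type phi} directly. First I would note that $(\H_3)$ gives $-A \in \text{BIP}(X,\theta_A)$, hence (by \cite{pruss}) that $-A$ is sectorial with spectral angle at most $\theta_A$, so $-A \in \text{Sect}(\theta_A)$; together with $0 \in \rho(-A)$ coming from $(\H_2)$, I would apply the equivalence $1.\Leftrightarrow 2.$ of \refR{Rem type phi} with $T=-A$ and $\theta_T = \theta_A$. This yields that $-A$ is of type $\pi - \theta_A - \varepsilon$ for every $\varepsilon \in (0,\pi-\theta_A)$, that is, of type $\varphi$ for all $\varphi \in (0,\pi-\theta_A)$.

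For statement 2, I would fix $\varphi \in (0,\pi-\theta_A)$ and $\mu \in \overline{S_\varphi}$. By statement 1, $-A+\mu I$ is boundedly invertible with $\|(-A+\mu I)^{-1}\|_{\L(X)} \leq C/(1+|\mu|)$ and, as a sectorial perturbation of $-A$, is sectorial of angle $<\pi$; thus $\sqrt{-A+\mu I}$ is well defined and $-\sqrt{-A+\mu I}$ generates a bounded analytic semigroup, exactly as in \refL{Lem DY}. The core step is to control this semigroup uniformly in $\mu$: I would apply \cite{dore-yakubov}, Lemma 2.6 b) with $\alpha=0$ (as in the proof of \refL{Lem DY}, the shift $\mu$ now playing the role of $i\sqrt{-\lambda-k^2/4}$) and use $0\in\rho(-A)$ to handle the regime of bounded $|\mu|$, obtaining constants $K\geq 1$ and $\omega>0$ independent of $\mu$ such that
\begin{equation*}
\left\|e^{-t\sqrt{-A+\mu I}}\right\|_{\L(X)} \leq K\,e^{-\omega\,t\,\sqrt{1+|\mu|}}, \qquad t\geq 0.
\end{equation*}

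With this decay in hand, I would finish by Young's inequality. Setting $h_\mu(t):=\|e^{-t\sqrt{-A+\mu I}}\|_{\L(X)}$, the pointwise bound $\|I_{\mu,f}(x)\|_X \leq \int_a^x h_\mu(x-s)\,\|f(s)\|_X\,ds$ realizes $x\mapsto\|I_{\mu,f}(x)\|_X$ as the convolution $h_\mu * \|f\|_X$ on $(a,b)$, so that $\|I_{\mu,f}\|_{L^p(a,b;X)} \leq \|h_\mu\|_{L^1(0,c)}\,\|f\|_{L^p(a,b;X)}$ with $c=b-a$. The decay estimate then gives
\begin{equation*}
\|h_\mu\|_{L^1(0,c)} \leq K \int_0^{+\infty} e^{-\omega\, t\, \sqrt{1+|\mu|}}\,dt = \frac{K}{\omega\,\sqrt{1+|\mu|}},
\end{equation*}
which is the announced bound with $C=K/\omega$; the estimate for $J_{\mu,f}$ follows identically from the backward convolution (equivalently, via the substitution $x\mapsto a+b-x$). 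I expect the main obstacle to be precisely the uniform-in-$\mu$ decay displayed above: one must merge the Dore--Yakubov estimate, which governs the large-$|\mu|$ regime with rate $\sim|\mu|^{1/2}$, with the baseline decay ensured by $0\in\rho(-A)$ for bounded $|\mu|$, so as to produce a single pair $K,\omega$ valid simultaneously over all of $\overline{S_\varphi}$.
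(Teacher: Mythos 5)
Your handling of statement 1 coincides with the paper's, which disposes of it with the single remark that it ``is given by \refR{Rem type phi}'': your chain $(\H_3)\Rightarrow -A\in\text{Sect}(\theta_A)$ via Pr\"uss--Sohr, combined with $0\in\rho(-A)$ from $(\H_2)$, is exactly what that equivalence requires. For statement 2, however, the paper offers no argument at all — it only cites Lemma 4.6 of \cite{FLMT} — so your proof is necessarily an independent route, and it is a correct one: dominating $x\mapsto\|I_{\mu,f}(x)\|_X$ by the convolution of $\|f(\cdot)\|_X$ (extended by zero) with the scalar kernel $h_\mu(t)=\|e^{-t\sqrt{-A+\mu I}}\|_{\L(X)}$ restricted to $[0,c]$, Young's inequality reduces everything to $\|h_\mu\|_{L^1(0,c)}\leqslant C(1+|\mu|)^{-1/2}$, which your uniform decay estimate delivers, and $J_{\mu,f}$ follows by reflection. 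What your route buys is self-containedness (and it is very plausibly the mechanism behind the cited lemma); what the paper's route buys is brevity. Two points should be made explicit in a full write-up, though neither is a genuine gap. First, you must invoke Dore--Yakubov, Lemma 2.6 b), in its original form, valid for all $t>0$ with constants independent of $t$ and of the shift; the restatement in \refL{Lem DY} of this paper fixes $t_0$ and allows $K,\tilde{\omega}$ to depend on it, which by itself cannot be integrated in $t$, so anchoring your estimate to ``as in \refL{Lem DY}'' is slightly misleading. Second, for the small-$|\mu|$ regime the cleanest merge is to observe that, $-A$ being of type $\varphi$ and boundedly invertible, $-A-\varepsilon_0 I$ remains of type $\varphi$ for some small $\varepsilon_0>0$, and then to apply the same Dore--Yakubov bound to $(-A-\varepsilon_0 I)+(\mu+\varepsilon_0)I$, using that $|\mu+\varepsilon_0|\geqslant c_\varphi \min(1,\varepsilon_0)\,(1+|\mu|)$ for $\mu\in\overline{S_\varphi}$ with $\varphi<\pi$; this produces the single pair $K,\omega$ over all of $\overline{S_\varphi}$ that your last sentence correctly identifies as the crux.
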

Statement 1. is given by \refR{Rem type phi} and statement 2. is proved in \cite{FLMT}, Lemma 4.6. 

\begin{Lem}\label{Lem LMT 2}
Assume $(\H_1)$, $(\H_2)$ and $(\H_3)$. Let $\varphi \in (0, \pi-\theta_A)$ fixed. Then, there exists $C > 0$, such that, for all $\eta,\mu \in \overline{S_\varphi} \subset \rho(-A)$ and all $f\in L^p(a,b;X)$ with $1<p<+\infty$, we have
\begin{enumerate}
\item $\dis \left\|e^{-(.-a)\sqrt{-A + \eta I}} \int_a^b e^{-(s-a)\sqrt{-A+\mu I}} f(s)~ ds\right\|_{L^p(a,b;X)} \leqslant \left(\frac{C}{\sqrt{1+|\mu|}} + \frac{C}{\sqrt{1+|\eta|}} \right)\left\|f\right\|_{L^p(a,b;X)},$ 

\item $\dis \left\|e^{-(.-a)\sqrt{-A + \eta I}} \int_a^b e^{-(b-s)\sqrt{-A+\mu I}} f(s)~ ds \right\|_{L^p(a,b;X)} \leqslant \left(\frac{C}{\sqrt{1+|\mu|}} + \frac{C}{\sqrt{1+|\eta|}} \right) \left\|f\right\|_{L^p(a,b;X)},$

\item $\dis \left\|e^{-(b-.)\sqrt{-A + \eta I}}  \int_a^b e^{-(b-s)\sqrt{-A+\mu I}} f(s)~ ds\right\|_{L^p(a,b;X)} \leqslant \left(\frac{C}{\sqrt{1+|\mu|}} + \frac{C}{\sqrt{1+|\eta|}} \right) \left\|f\right\|_{L^p(a,b;X)},$

\item $\dis \left\|e^{-(b-.)\sqrt{-A + \eta I}} \int_a^b e^{-(s-a)\sqrt{-A+\mu I}} f(s)~ ds\right\|_{L^p(a,b;X)} \leqslant \left(\frac{C}{\sqrt{1+|\mu|}} + \frac{C}{\sqrt{1+|\eta|}} \right)\left\|f\right\|_{L^p(a,b;X)}.$
\end{enumerate}
\end{Lem}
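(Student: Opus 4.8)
The plan is to reduce all four estimates to a single mechanism, since statements~2, 3 and 4 are obtained from statement~1 by the reflections $x\mapsto a+b-x$ and/or $s\mapsto a+b-s$, which leave the relevant $L^p$- and $L^{p'}$-norms invariant. Accordingly I would carry out statement~1 in detail and indicate at the end why the others follow verbatim. Write $R_\nu := \sqrt{-A+\nu I}$ for $\nu\in\overline{S_\varphi}$, and observe that in statement~1 the inner integral $\xi := \int_a^b e^{-(s-a)R_\mu}f(s)\,ds$ is a fixed element of $X$ that does \emph{not} depend on the outer variable~$x$. The function to be estimated is therefore just $\Phi(x):=e^{-(x-a)R_\eta}\,\xi$, and its $L^p$-norm factorises into an outer and an inner contribution.

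The first and only substantial step is a uniform exponential decay estimate for the semigroups. Since $(\H_1)$, $(\H_2)$, $(\H_3)$ hold and $\overline{S_\varphi}\subset\rho(-A)$ (as recorded in \refL{Lem LMT}), the operators $\sqrt{-A+\nu I}$ generate bounded analytic semigroups for every $\nu\in\overline{S_\varphi}$, exactly as in \refL{Lem DY}. Applying \cite{dore-yakubov}, Lemma~2.6, statement~b), with $\alpha=0$ to the shifted operator $-A+\nu I$ (whose scalar perturbation $\nu$ ranges over the full sector $\overline{S_\varphi}$), and combining it with the exponential decay available at $\nu=0$ since $0\in\rho(A)$, I would obtain constants $M\geqslant 1$ and $\omega>0$, \emph{independent of} $\nu$, such that
\begin{equation*}
\left\|e^{-t\sqrt{-A+\nu I}}\right\|_{\L(X)} \leqslant M\,e^{-\omega\sqrt{1+|\nu|}\,t}, \qquad t\geqslant 0,\quad \nu\in\overline{S_\varphi}.
\end{equation*}
This is the hard part: one must verify that the decay rate is of order $\sqrt{1+|\nu|}$ \emph{uniformly} over the whole sector, which is precisely what the Dore--Yakubov estimate already used for \refL{Lem DY} provides; everything after it is routine.

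Granting this, I would first bound $\xi$ by Hölder's inequality,
\begin{equation*}
\|\xi\| \leqslant \int_a^b \left\|e^{-(s-a)R_\mu}\right\|_{\L(X)}\|f(s)\|\,ds \leqslant \left(\int_a^b \left\|e^{-(s-a)R_\mu}\right\|_{\L(X)}^{p'}\,ds\right)^{1/p'}\|f\|_{L^p(a,b;X)} =: H_\mu\,\|f\|_{L^p(a,b;X)},
\end{equation*}
and then factor the outer semigroup out of the $L^p$-norm,
\begin{equation*}
\|\Phi\|_{L^p(a,b;X)} \leqslant \left(\int_a^b\left\|e^{-(x-a)R_\eta}\right\|_{\L(X)}^{p}\,dx\right)^{1/p}\|\xi\| =: G_\eta\,\|\xi\|.
\end{equation*}
Inserting the uniform decay estimate into these two elementary integrals (extended to $(0,+\infty)$) gives $G_\eta \leqslant C\,(1+|\eta|)^{-1/(2p)}$ and $H_\mu \leqslant C\,(1+|\mu|)^{-1/(2p')}$, whence $\|\Phi\|_{L^p}\leqslant C\,(1+|\eta|)^{-1/(2p)}(1+|\mu|)^{-1/(2p')}\|f\|_{L^p}$.

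The final step converts this product into the required sum. I would apply Young's inequality for products, $a^{1/p}b^{1/p'}\leqslant \tfrac1p a+\tfrac1{p'}b \leqslant a+b$, with $a=(1+|\eta|)^{-1/2}$ and $b=(1+|\mu|)^{-1/2}$, which yields
\begin{equation*}
(1+|\eta|)^{-1/(2p)}(1+|\mu|)^{-1/(2p')} \leqslant \frac{1}{\sqrt{1+|\eta|}}+\frac{1}{\sqrt{1+|\mu|}},
\end{equation*}
and hence statement~1. For statements~2--4, the substitution $x\mapsto a+b-x$ turns $e^{-(b-\cdot)R_\eta}$ into $e^{-(\cdot-a)R_\eta}$ without changing the $L^p$-norm, and $s\mapsto a+b-s$ interchanges $e^{-(s-a)R_\mu}$ with $e^{-(b-s)R_\mu}$; since the constants $G_\eta$ and $H_\mu$ are unaffected by these reflections, the identical chain of inequalities applies and the proof is complete.
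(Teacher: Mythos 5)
Your proof is correct, but it takes a genuinely different route from the paper's. The paper does not treat the inner integral as a fixed vector: it splits it at the running point $x$ and rewrites the function as $I_{\eta,g}(x) + e^{-(x-a)\sqrt{-A+\eta I}}\,e^{-(x-a)\sqrt{-A+\mu I}}\,J_{\mu,f}(x)$ with $g(s) = e^{-(s-a)(\sqrt{-A+\mu I}+\sqrt{-A+\eta I})}f(s)$, then quotes estimate (38) of \cite{FLMT} for the uniform boundedness of the product of the two semigroups and of $\|g\|_{L^p(a,b;X)}$, and concludes with the convolution bounds of \refL{Lem LMT}, which directly produce the sum $\frac{C}{\sqrt{1+|\mu|}}+\frac{C}{\sqrt{1+|\eta|}}$. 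You instead reduce everything to the single uniform decay estimate $\left\|e^{-t\sqrt{-A+\nu I}}\right\|_{\L(X)}\leqslant M e^{-\omega t\sqrt{1+|\nu|}}$ for all $t>0$ and $\nu\in\overline{S_\varphi}$, then apply Hölder in $s$, integrate the operator norm in $x$, and finish with Young's inequality; statements 2--4 become purely formal reflections. This is sound, and it even yields the stronger product bound $(1+|\eta|)^{-1/(2p)}(1+|\mu|)^{-1/(2p')}$ before weakening it to the stated sum. The one point a referee would ask you to make explicit is your key estimate: the paper's own \refL{Lem DY} only records a fixed-$t_0$ version of the Dore--Yakubov bound, so you must invoke the all-$t$, uniform-in-$\nu$ form of \cite{dore-yakubov}, Lemma 2.6 b) (together with the uniform decay for $|\nu|$ small, which uses $0\in\rho(A)$ and uniform sectoriality, not merely decay ``at $\nu=0$''); that form does hold, but your sketch of it is the least detailed part of the argument. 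In exchange, the paper's route never needs any all-$t$ exponential decay --- only uniform boundedness of semigroup products plus the already-established \refL{Lem LMT} --- so it recycles existing lemmas at the cost of a less transparent decomposition, while your route is more elementary and self-contained given the Dore--Yakubov input.
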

\begin{proof}
We first focus on statement 1. For all $x \in [a,b]$, setting 
$$v(x) = e^{-(x-a)\sqrt{-A + \eta I}} \int_a^b e^{-(s-a)\sqrt{-A+\mu I}} f(s)~ ds,$$ 
we have
$$\begin{array}{lll}
\dis v(x) & = & \dis e^{-(x-a)\sqrt{-A + \eta I}} \int_a^x e^{-(s-a)\sqrt{-A+\mu I}} f(s)~ ds \\ \ecart
&& + \dis e^{-(x-a)\sqrt{-A + \eta I}} \int_x^b e^{-(s-a)\sqrt{-A+\mu I}} f(s)~ ds \\ \\

 & = & \dis  \int_a^x e^{-(x-s)\sqrt{-A + \eta I}} e^{-(s-a)\left(\sqrt{-A+\mu I} + \sqrt{-A + \eta I}\right)} f(s)~ ds \\ \ecart
&& + \dis e^{-(x-a)\sqrt{-A + \eta I}} e^{-(x-a)\sqrt{-A+\mu I}}\int_x^b e^{-(s-x)\sqrt{-A+\mu I}} f(s)~ ds \\ \\

& = & \dis I_{\eta,g} (x) + e^{-(x-a)\sqrt{-A + \eta I}} e^{-(x-a)\sqrt{-A+\mu I}} J_{\mu,f} (x),
\end{array}$$
where $g(s) = e^{-(s-a)\left(\sqrt{-A+\mu I} + \sqrt{-A + \eta I}\right)} f(s)$. Then from (38) in \cite{FLMT}, there exists $C > 0$, independent of $\eta$ and $\mu$, such that
$$\left\| e^{-(x-a)\sqrt{-A + \eta I}} e^{-(x-a)\sqrt{-A + \mu I}}\right\|_{\L(X)} \leqslant C \quad \text{and} \quad \left\|g\right\|_{L^p(a,b;X)} \leqslant C \left\|f\right\|_{L^p(a,b;X)}.$$
Finally \refL{Lem LMT} gives statement 1. The other statements are obtained, in the same way. 

Note that statements 1. and 2. have been proved in Lemma 5.6 in \cite{FLMT}, in the case when $a=0$, $b=1$ and $\eta = \mu$.
\end{proof}
\begin{Rem}\label{Rem LMT}
Note that, for $\dis\lambda \in -\frac{k^2}{4} + \left(\CC \setminus\overline{S_{2\theta_A}} \right)$, we have
$$M_\lambda = - \sqrt{-A_{k/2} + \mu I} \quad \text{and} \quad L_\lambda = - \sqrt{-A_{k/2} + \eta I},$$
with $\dis\mu = - \eta = -i\sqrt{-\lambda - \frac{k^2}{4}}$. But, from \refL{Lem sect}, we deduce that $\mu,\eta \in S_{\pi-\theta_A}$, thus we can apply \refL{Lem LMT 2}.
\end{Rem}
\begin{Lem}\label{Lem FLMT}
Let $t_0 > 0$ fixed. Then 
\begin{enumerate}
\item $\CC \setminus \overline{S_{\theta_A}} \subset \rho(A_{k/2})$ and for any $\nu \in (0, \pi-\theta_A)$, there exists $C_\nu > 0$ such that
$$\left\|\left(-A_{k/2} - \mu I\right)^{-1}\right\|_{\L(X)} \leqslant \frac{C_{\nu}}{|\mu|}, \quad \mu \in S_{\nu}.$$

\item There exists $C_{t_0} > 0$ such that, for any $\mu \in S_{\pi-\theta_A}$, we have 
$$\left\|\left(I \pm e^{t_0 H_\mu}\right)^{-1} \right\|_{\L(X)} \leqslant C_{t_0},$$
where $H_\mu := - \sqrt{-A_{k/2} - \mu I}$.
\end{enumerate} 
\end{Lem}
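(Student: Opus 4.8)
The plan is to treat the two statements separately: the first is read off from the sectoriality of $-A_{k/2}$, while the second rests on the analyticity and the spectral location of the semigroup generated by $H_\mu$, with every constant tracked so as to remain independent of $\mu$.

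For statement 1, I would argue exactly as in the proof of \refL{Lem PQ BIP}: assumptions $(\H_1)$, $(\H_2)$, $(\H_3)$ together with \cite{arendt-bu-haase}, Theorem 2.3, give $-A_{k/2}\in\text{BIP}(X,\theta_A)$, hence $-A_{k/2}\in\text{Sect}(\theta_A)$ by \refD{Def op Sect}. The resolvent-set inclusion is then the spectral localisation of \refD{Def op Sect}~i). For the norm bound I would use the equivalence recalled in \refR{Rem type phi}: since $0\in\rho(A)$ by $(\H_2)$ and $-A_{k/2}$ is sectorial of angle $\theta_A$, it is of type $\varphi$ for every $\varphi\in(0,\pi-\theta_A)$ — which is precisely statement 1 of \refL{Lem LMT} applied to the shifted operator. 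Choosing $\varphi>\nu$ then yields $\|(-A_{k/2}-\mu I)^{-1}\|_{\L(X)}\leqslant C_\nu/|\mu|$ uniformly for $\mu\in S_\nu$. This step is essentially bookkeeping with the standard sectorial resolvent estimate and presents no real difficulty.

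For statement 2, I would first note that $-A_{k/2}-\mu I$ is sectorial for every $\mu\in S_{\pi-\theta_A}$ — this is the same fact already established for $-P_\lambda$ and $-Q_\lambda$ in \refL{Lem PQ BIP} and \refL{Lem DY} — so, by \cite{haase}, Proposition 3.1.2, $\sqrt{-A_{k/2}-\mu I}\in\text{BIP}(X,\cdot)$ with angle strictly below $\pi/2$, and $H_\mu$ generates a bounded analytic semigroup whose spectrum lies in an open sector of the left half-plane. Consequently $\sigma(e^{t_0H_\mu})\subset\{0\}\cup\{z:|z|<1\}$, so $\mp 1\in\rho(e^{t_0H_\mu})$ and $(I\pm e^{t_0H_\mu})^{-1}$ exists; for a fixed $\mu$ this is exactly \cite{lunardi}, Proposition 2.3.6, the very result that produced $Z$ and $W$ in \refT{Th Carre}. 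The whole point is to make the bound uniform in $\mu$.

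To obtain uniformity I would split the parameter range. For $|\mu|$ large, the decay estimate of \refL{Lem DY} (transcribed to $H_\mu$, where the relevant frequency is $|\mu|^{1/2}$) gives $\|e^{t_0H_\mu}\|_{\L(X)}<1$ for $|\mu|$ beyond some threshold, so a Neumann series bounds $(I\pm e^{t_0H_\mu})^{-1}$ by $(1-q)^{-1}$ uniformly. For $|\mu|$ in the remaining range the semigroup norm need not be $<1$, so instead I would represent $(I\pm e^{t_0H_\mu})^{-1}=\frac{1}{2i\pi}\int_\Gamma(1\pm e^{t_0z})^{-1}(zI-H_\mu)^{-1}\,dz$ over a contour $\Gamma$ in the open left half-plane that skirts the zeros of $1\pm e^{t_0z}$, and bound the integral using the $\mu$-independent resolvent estimate of statement 1 (pulled back through $-A_{k/2}-\mu I$) together with the uniform bounds of \refL{Lem LM^-1}. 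The main obstacle is precisely this uniformity on the bounded-$\mu$ part of $S_{\pi-\theta_A}$: the Neumann argument fails there, and one must control the functional-calculus integral by constants that do not deteriorate as $\mu$ approaches the origin, which is exactly what the $\mu$-independent estimates furnished by statement 1, \refL{Lem DY} and \refL{Lem LM^-1} are designed to supply.
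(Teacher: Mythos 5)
Your treatment of statement 1 follows the same route as the paper, whose entire proof is the citation of Pr\"uss--Sohr; but one step is misattributed. From $-A_{k/2}\in\text{BIP}(X,\theta_A)$ you conclude $-A_{k/2}\in\text{Sect}(\theta_A)$ ``by \refD{Def op Sect}''. This is not definitional: in the paper's definition, membership in $\text{BIP}(X,\theta_A)$ presupposes sectoriality of \emph{some} unspecified angle, and the fact that the BIP angle dominates the spectral angle is precisely \cite{pruss-sohr}, Theorem 2 (the result the paper cites here, invoked again as \cite{pruss}, Theorem 8.3, in the proof of \refL{Lem LM^-1}). Also, deducing $-A_{k/2}\in\text{BIP}(X,\theta_A)$ from \cite{arendt-bu-haase} uses $(\H_4)$ (needed when $k<0$), which you omit. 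With these citations restored, your statement 1 is the paper's proof.

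For statement 2 the paper only points to the technique of Lemma 5.1 in \cite{FLMT}, so a self-contained argument is welcome, and your skeleton (spectral mapping for fixed $\mu$, Neumann series for large $|\mu|$ via \refL{Lem DY}, a contour integral on the remaining range) is the natural one; the large-$|\mu|$ part is indeed fine granted \refL{Lem DY}. But three points do not hold up. First, your opening claim that $-A_{k/2}-\mu I$ is sectorial for \emph{every} $\mu\in S_{\pi-\theta_A}$ is false: any nonzero $\mu\in\sigma(-A_{k/2})$ lies in $\overline{S_{\theta_A}}\setminus\{0\}\subset S_{\pi-\theta_A}$, and for such $\mu$ one has $0\in\sigma(-A_{k/2}-\mu I)$, so $H_\mu$ is not even defined. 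What \refL{Lem PQ BIP} establishes (via \cite{monniaux}, whose hypothesis is $\theta_A+|\arg(-\mu)|<\pi$) covers only shifts with $\theta_A<|\arg\mu|<\pi-\theta_A$, which are exactly those occurring in \refR{Rem LMT}; a correct proof must restrict to these $\mu$, or read the statement with the sign of \refL{Lem LMT 2}, i.e.\ $\sqrt{-A_{k/2}+\mu I}$. Second, the representation $(I\pm e^{t_0H_\mu})^{-1}=\frac{1}{2i\pi}\int_\Gamma(1\pm e^{t_0z})^{-1}(zI-H_\mu)^{-1}\,dz$ is not a convergent integral: on an unbounded contour in the left half-plane $(1\pm e^{t_0z})^{-1}\to 1$, so the integrand decays only like $1/|z|$; one must first write $(I\pm e^{t_0H_\mu})^{-1}=I\mp e^{t_0H_\mu}(I\pm e^{t_0H_\mu})^{-1}$ (the identity of \refL{Lem reg}) and represent the second term, whose symbol $e^{t_0z}(1\pm e^{t_0z})^{-1}$ decays exponentially on $\Gamma$. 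Third, and most seriously, the uniformity on the bounded-$\mu$ range --- the only real content of the lemma --- is not furnished by statement 1, \refL{Lem DY} and \refL{Lem LM^-1}, contrary to your closing sentence. The contour must pass, uniformly in $\mu$, between $\sigma(H_\mu)$ and the zeros of $1\pm e^{t_0z}$ on $i\RR$ (for the sign ``$-$'', $z=0$ is such a zero, where the symbol has a pole); this requires a $\mu$-independent positive lower bound on $\mathrm{dist}\left(\sigma(-A_{k/2}\mp\mu I),0\right)$. Statement 1 only gives $\|(-A_{k/2}-\mu I)^{-1}\|_{\L(X)}\leqslant C_\nu/|\mu|$, which degenerates as $\mu\to0$ and does not even imply that $A_{k/2}$ is boundedly invertible; the decay factor of \refL{Lem DY} tends to $K\geqslant1$ as $\mu\to0$; and \refL{Lem LM^-1} bounds $M_\lambda L_\lambda^{-1}$, which says nothing about the location of $\sigma(H_\mu)$. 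The missing ingredient is $0\in\rho(A_{k/2})$ --- true under the hypotheses, by $(\H_4)$ when $k\leqslant0$ and by $\sigma(A)\subset-\overline{S_{\theta_A}}$ when $k>0$, but never invoked by you --- which, combined with the angular separation $|\arg(\mp\mu)|>\theta_A$, keeps $\sigma(H_\mu)$ at a uniform distance from the relevant zeros for $|\mu|\leqslant R$. Without it the conclusion genuinely fails: if $0\in\sigma(A_{k/2})$, then $\sigma(H_\mu)$ approaches $i\RR$ near $0$ and $\|(I-e^{t_0H_\mu})^{-1}\|_{\L(X)}$ blows up as $\mu\to0$. (A smaller final remark: the constants any such argument produces are uniform on closed subsectors $\overline{S_\nu}$, $\nu<\pi-\theta_A$, not on the open sector as stated; this imprecision is already in the paper's formulation, and closed subsectors are all that \refD{Def op Sect} requires downstream.)
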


\begin{proof}\hfill
\begin{enumerate}
\item The result follows from Theorem 2, p. 437 in \cite{pruss-sohr} since $-A_{k/2} \in \text{BIP}\,(X,\theta_A)$.

\item We apply the same technique as in the proof of Lemma 5.1 in \cite{FLMT}: $A$ is replaced by $A_{k/2}$ and $e^{2A}$ is replaced by $e^{t_0 A_{k/2}}$.
\end{enumerate}
\end{proof}

Due to the previous study, we can apply the results of \refs{sect linear steady case} and then obtain that, for each $i = 1,2,3,4,5$, problem \eqref{EDA}-(BCi)$_0$ admits a unique classical solution $u_i$ given by \eqref{Repre u} in \refP{Proposition 1}, where $M$ and $L$ are replaced by $M_{\lambda}$ and $L_\lambda$. We need to give estimates on $u_i$ which take into account the dependence on $\lambda$, but $u_i$ contains $F_{0,f}$ and $F'_{0,f}$. To this end, we give the following result.

\begin{Lem}\label{Lem estim norme F'}
Let $\gamma \in \{a,b\}$. Assume that $(\H_1)$, $(\H_2)$, $(\H_3)$ and $(\H_4)$ hold. Then, there exists $C > 0$, such that, for all $\dis\lambda \in -\frac{k^2}{4} + \left(\CC \setminus \overline{S_{2\theta_A}}\right)$ and all $f \in L^p(a,b;X)$, we have
$$\left\{\begin{array}{lll}
\dis\left\|v_0\right\|_{L^p(a,b;X)} & \leqslant & \dis \frac{C }{1+\sqrt{\left|\lambda + \frac{k^2}{4}\right|}}\,\left\|f\right\|_{L^p(a,b;X)} \\ \\

\left\|L_\lambda v_0(.)\right\|_{L^p(a,b;X)} & \leqslant & \dis\frac{C}{\sqrt{1+\sqrt{\left|\lambda + \frac{k^2}{4}\right|}}} \,\left\|f\right\|_{L^p(a,b;X)}.
\end{array}\right.$$
Moreover, for $T_\lambda = M_\lambda$ or $L_\lambda$, we have
$$\left\{\begin{array}{lll}
\dis\left\|e^{(.-a)T_\lambda} F'_{0,f}(\gamma)\right\|_{L^p(a,b;X)} &\leqslant&\dis \frac{C}{\left(1 + \sqrt{\left|\lambda + \frac{k^2}{4}\right|}\right)^{\frac{3}{2}}}\,\|f\|_{L^p(a,b;X)} \\ \\

\dis\left\|e^{(b-.)T_\lambda} F'_{0,f}(\gamma)\right\|_{L^p(a,b;X)} &\leqslant& \dis \frac{C}{\left(1 + \sqrt{\left|\lambda + \frac{k^2}{4}\right|}\right)^{\frac{3}{2}}}\,\|f\|_{L^p(a,b;X)},
\end{array}\right.$$
and
$$\left\{\begin{array}{lll}
\dis\left\|e^{(.-a)T_\lambda} L_\lambda F'_{0,f}(\gamma)\right\|_{L^p(a,b;X)} &\leqslant&\dis \frac{C}{1 + \sqrt{\left|\lambda + \frac{k^2}{4}\right|}}\,\|f\|_{L^p(a,b;X)} \\ \\

\dis\left\|e^{(b-.)T_\lambda} L_\lambda F'_{0,f}(\gamma)\right\|_{L^p(a,b;X)} &\leqslant& \dis \frac{C}{1 + \sqrt{\left|\lambda + \frac{k^2}{4}\right|}}\,\|f\|_{L^p(a,b;X)},
\end{array}\right.$$
where for all $x \in [a,b]$, $F'_{0,f}$ and $v_0$ are given by
\begin{equation}\label{F'0}
\begin{array}{rll}
F'_{0,f}(x) & = &  - \dis \frac{1}{2} \left(e^{(x-a)M_\lambda} + e^{(b-x)M_\lambda} e^{cM_\lambda} \right) Z \int_a^b e^{(s-a)M_\lambda} v_0(s)~ ds \\ \ecart
&& + \dis \frac{1}{2} \left(e^{(b-x)M_\lambda} + e^{(x-a)M_\lambda} e^{cM_\lambda}\right) Z \int_a^b e^{(b-s)M_\lambda} v_0(s)~ ds \\ \\
&& \dis + \frac{1}{2} \int_a^x e^{(x-s)M_\lambda} v_0(s)~ ds - \frac{1}{2} \int_x^b e^{(s-x)M_\lambda} v_0(s)~ ds, 
\end{array}
\end{equation}
and
\begin{equation}\label{v0}
\begin{array}{rll}
v_0(x) & = & \dis\frac{1}{2} \left(e^{(b-x)L_\lambda}  e^{cL_\lambda} - e^{(x-a)L_\lambda}\right) W L_\lambda^{-1} \dis\int_a^b e^{(s-a)L_\lambda} f(s)~ ds \\ \ecart
&& + \dis \frac{1}{2} \left(e^{(x-a)L_\lambda} e^{cL_\lambda} - e^{(b-x)L_\lambda} \right) W L_\lambda^{-1} \int_a^b e^{(b-s)L_\lambda} f(s)~ ds \\ \ecart
&& +\dis \frac{1}{2} L_\lambda^{-1} \dis\int_a^x e^{(x-s)L_\lambda} f(s)~ ds + \frac{1}{2} L_\lambda^{-1} \int_x^b e^{(s-x)L_\lambda} f(s)~ ds,
\end{array}
\end{equation}
with $Z = \left(I-e^{2cM_\lambda} \right)^{-1}$ and $W = \left(I-e^{2cL_\lambda}\right)^{-1}$.
\end{Lem}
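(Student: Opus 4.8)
The whole proof reduces every quantity to the shape ``bounded operator $\times$ outer semigroup $\times$ a full integral $\int_a^b$ of an inner semigroup against $f$ or $v_0$'', on which \refL{Lem LMT} and \refL{Lem LMT 2} can be applied with $A$ replaced by $A_{k/2}$. These two lemmas are available here because $-A_{k/2}\in\text{BIP}(X,\theta_A)$ with $\theta_A<\pi/2$ and, moreover, $0\in\rho(A_{k/2})$: indeed $k/2\in[k,+\infty)\subset\rho(A)$ when $k\leqslant 0$ by $(\H_4)$, while for $k>0$ one has $(0,+\infty)\subset\rho(A)$ since, by $(\H_2)$ and $(\H_3)$, $-A$ is sectorial of angle $<\pi/2$ with $0\in\rho(A)$, so again $k/2\in\rho(A)$. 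Throughout I keep the notation of \refR{Rem LMT}, writing $\mu=-\eta=-i\sqrt{-\lambda-k^2/4}$, so that $M_\lambda=-\sqrt{-A_{k/2}+\mu I}$ and $L_\lambda=-\sqrt{-A_{k/2}+\eta I}$ with $|\mu|=|\eta|=\sqrt{|\lambda+k^2/4|}=:\rho$; all the operators $A_{k/2},M_\lambda,L_\lambda,Z,W$ commute, being functions of $A_{k/2}$.

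First I would collect the uniform auxiliary bounds. By \refL{Lem FLMT}, statement 2 with $t_0=2c$, the factors $Z=(I-e^{2cM_\lambda})^{-1}$ and $W=(I-e^{2cL_\lambda})^{-1}$ are bounded independently of $\lambda$. By \refL{Lem DY} with $\alpha=0$ and $t_0=c$, the frozen semigroups $e^{cM_\lambda}$ and $e^{cL_\lambda}$ (hence also $e^{2cM_\lambda}$ and $I\pm e^{2cM_\lambda}$) are uniformly bounded; the non-frozen semigroups $e^{(\cdot-a)T_\lambda}$, $e^{(b-\cdot)T_\lambda}$ are never estimated alone but always kept coupled to the integral, so that \refL{Lem LMT 2} applies. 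Finally, combining the square-root estimate \eqref{ineg C2} with the boundedness of $L_\lambda^{-1}=-(-A_{k/2}+\eta I)^{-1/2}$ as $\rho\to 0$ (which uses $0\in\rho(A_{k/2})$), I obtain $\|L_\lambda^{-1}\|_{\L(X)}\leqslant C/\sqrt{1+\rho}$.

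Next I would estimate $v_0$ and $L_\lambda v_0$ directly from \eqref{v0}. In each of the first two lines I commute $W$ and $L_\lambda^{-1}$ to the front, absorb the frozen factor $e^{cL_\lambda}$ by its uniform bound, and apply the relevant one of the four statements of \refL{Lem LMT 2} to the genuinely coupled pair ``outer $L_\lambda$-semigroup $\times\int_a^b$ inner $L_\lambda$-semigroup against $f$''; this produces a factor $C/\sqrt{1+\rho}$, and multiplication by $\|L_\lambda^{-1}\|\leqslant C/\sqrt{1+\rho}$ yields the announced $C/(1+\rho)$. The last two lines of \eqref{v0} are $\tfrac12 L_\lambda^{-1}I_{\eta,f}$ and $\tfrac12 L_\lambda^{-1}J_{\eta,f}$, estimated by $\|L_\lambda^{-1}\|\,\|I_{\eta,f}\|_{L^p(a,b;X)}$ via \refL{Lem LMT}, with the same outcome. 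For $L_\lambda v_0$, the operator $L_\lambda$ cancels exactly one $L_\lambda^{-1}$ in every line, so the computation loses one factor $C/\sqrt{1+\rho}$ and gives the weaker decay $C/\sqrt{1+\rho}$, as claimed.

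It remains to treat the terms carrying $F'_{0,f}(\gamma)$, $\gamma\in\{a,b\}$, and here lies the one genuinely delicate point: one must not pull the constant vector $F'_{0,f}(\gamma)$ out of the norm, since estimating $\|e^{(\cdot-a)T_\lambda}F'_{0,f}(\gamma)\|_{L^p(a,b;X)}$ by $\|e^{(\cdot-a)T_\lambda}\|\,\|F'_{0,f}(\gamma)\|_X$ only gives decay $C/(1+\rho)$, which is too weak. Instead I evaluate \eqref{F'0} at $x=\gamma$ (where the Volterra integrals $\int_a^x$ and $\int_x^b$ collapse to $0$ or to the full $\int_a^b$), commute the bounded factors $Z$, $e^{cM_\lambda}$, $I\pm e^{2cM_\lambda}$ to the front, and apply \refL{Lem LMT 2} to the coupled object $e^{(\cdot-a)T_\lambda}\int_a^b e^{(s-a)M_\lambda}v_0(s)\,ds$ and its three companions, with outer parameter that of $T_\lambda$ and inner parameter $\mu$. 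This produces $(C/\sqrt{1+\rho})\,\|v_0\|_{L^p(a,b;X)}$; inserting $\|v_0\|_{L^p(a,b;X)}\leqslant (C/(1+\rho))\,\|f\|$ gives $(C/(1+\rho)^{3/2})\,\|f\|$. For the last pair of estimates I replace $v_0$ by $L_\lambda v_0$ (legitimate since $L_\lambda$ commutes through the inner $M_\lambda$-semigroup and $v_0(s)\in D(L_\lambda)$) and use $\|L_\lambda v_0\|_{L^p(a,b;X)}\leqslant (C/\sqrt{1+\rho})\,\|f\|$, producing $(C/(1+\rho))\,\|f\|$. The main obstacle is thus not any single hard estimate but the disciplined bookkeeping: matching each term to the correct statement of \refL{Lem LMT 2} and tracking the exact power of $1+\rho$, while justifying the commutations and the uniform auxiliary bounds above.
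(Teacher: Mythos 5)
Your proposal is correct and takes essentially the same route as the paper: the same uniform auxiliary bounds (\refL{Lem FLMT} for $Z,W$, \refL{Lem DY} for $e^{cM_\lambda},e^{cL_\lambda}$, the Dore--Yakubov estimate for $\|L_\lambda^{-1}\|$) and the same application of \refL{Lem LMT} and \refL{Lem LMT 2} to the coupled ``outer semigroup $\times$ full integral'' terms, including the key observation that $F'_{0,f}(\gamma)$ must never be decoupled from the outer semigroup. The only difference is the order of bookkeeping --- the paper estimates $L_\lambda v_0$ and $e^{(\cdot-a)T_\lambda}L_\lambda F'_{0,f}(\gamma)$ first and recovers $v_0$ and $e^{(\cdot-a)T_\lambda}F'_{0,f}(\gamma)$ by factoring out $L_\lambda^{-1}$, whereas you proceed in the reverse order --- which yields identical powers of $1+\sqrt{|\lambda+k^2/4|}$.
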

\begin{proof}
In the sequel, $C > 0$ will denote various constants which are independent of $f$ and $\lambda$. 

Let $t_0>0$ fixed. For all $\dis\lambda \in -\frac{k^2}{4} + \left(\CC \setminus \overline{S_{2\theta_A}}\right)$, we have
$$\left\{\begin{array}{llllr}
M_\lambda &=& - \sqrt{-A_{k/2} - i \sqrt{-\lambda - \frac{k^2}{4}} I}, & \text{with} &i\sqrt{-\lambda - \frac{k^2}{4}} \in S_{\pi-\theta_A}\, \\ \ecart
L_\lambda &=& - \sqrt{-A_{k/2} - \left(- i \sqrt{-\lambda - \frac{k^2}{4}} I\right)}, & \text{with} &-i\sqrt{-\lambda - \frac{k^2}{4}} \in S_{\pi-\theta_A}.
\end{array}\right.$$
Then, from \refL{Lem FLMT}, there exists $C_{t_0} > 0$ such that for all $\lambda \in -\frac{k^2}{4} + \left(\CC \setminus \overline{S_{2\theta_A}}\right)$, we obtain
\begin{equation}\label{estim Z et W}
\left\|\left(I \pm e^{t_0 M_\lambda}\right)^{-1}\right\|_{\L(X)} \leqslant C_{t_0} \quad \text{and} \quad \left\|\left(I \pm e^{t_0 L_\lambda}\right)^{-1}\right\|_{\L(X)} \leqslant C_{t_0}.
\end{equation}
Moreover, since $-M_\lambda \in$ BIP$(X,\theta_M)$ and $-L_\lambda \in$ BIP$(X,\theta_L)$, with $\theta_M, \theta_L \in [0,\pi/2)$, from \refL{Lem DY}, we deduce that
\begin{equation}\label{estim pazy}
\left\|e^{c M_\lambda}\right\|_{\L(X)} \leqslant C \quad \text{and} \quad \left\|e^{c L_\lambda}\right\|_{\L(X)} \leqslant C.
\end{equation}
Furthermore, from \cite{dore-yakubov}, Lemma 2.6, statement a), p. 104, we have
\begin{equation}\label{estim M-1}
\left\|M_\lambda^{-1}\right\|_{\L(X)} \leqslant \frac{C}{\sqrt{1+\left|- i\sqrt{-\lambda - \frac{k^2}{4}}\right|}} = \frac{C}{\sqrt{1+\sqrt{\left|\lambda + \frac{k^2}{4}\right|}}},
\end{equation}
and
\begin{equation}\label{estim L-1}
\left\|L_\lambda^{-1}\right\|_{\L(X)} \leqslant \frac{C}{\sqrt{1+\left|i \sqrt{-\lambda - \frac{k^2}{4}}\right|}} = \frac{C}{\sqrt{1+\sqrt{\left|\lambda + \frac{k^2}{4}\right|}}}.
\end{equation}
From \eqref{v0}, since $v_0(x) \in D(L_\lambda)$, for $x \in [a,b]$, we have
$$\begin{array}{lll}
L_\lambda v_0(x) &=& \dis \frac{1}{2} W e^{cL_\lambda} e^{(b-x)L_\lambda} \int_a^b e^{(s-a)L_\lambda} f(s)~ ds  - \frac{1}{2} W e^{(x-a)L_\lambda} \int_a^b e^{(s-a)L_\lambda} f(s)~ ds  \\ \ecart

&&+ \dis \frac{1}{2}  W e^{cL_\lambda} e^{(x-a)L_\lambda} \int_a^b e^{(b-s)L_\lambda} f(s)~ ds - \frac{1}{2} W e^{(b-x)L_\lambda} \int_a^b e^{(b-s)L_\lambda} f(s)~ ds \\ \ecart

&&+ \dis \frac{1}{2} \int_a^x e^{(x-s)L_\lambda} f(s)~ ds + \frac{1}{2} \int_x^b e^{(s-x)L_\lambda} f(s)~ ds.
\end{array}$$
Hence
$$\begin{array}{rll}
\left\|L_\lambda v_0(.)\right\|_{L^p(a,b;X)} & \leqslant & \dis \frac{1}{2} \left\|W\right\|_{\L(X)} \left\|e^{cL_\lambda}\right\|_{\L(X)} \left\|e^{(b-.)L_\lambda} \int_a^b e^{(s-a)L_\lambda} f(s)~ ds\right\|_{L^p(a,b;X)} \\ \ecart
&& \dis + \frac{1}{2} \left\|W\right\|_{\L(X)}  \left\|e^{(.-a)L_\lambda} \int_a^b e^{(s-a)L_\lambda} f(s)~ ds\right\|_{L^p(a,b;X)} \\ \ecart

&& + \dis \frac{1}{2} \left\|W\right\|_{\L(X)}  \left\|e^{cL_\lambda}\right\|_{\L(X)} \left\|e^{(.-a)L_\lambda} \int_a^b e^{(b-s)L_\lambda} f(s)~ ds\right\|_{L^p(a,b;X)} \\ \ecart

&& \dis + \frac{1}{2} \left\|W\right\|_{\L(X)} \left\|e^{(b-.)L_\lambda} \int_a^b e^{(b-s)L_\lambda} f(s)~ ds\right\|_{L^p(a,b;X)} \\ \ecart

&& + \dis \frac{1}{2} \left\|\int_a^. e^{(.-s)L_\lambda} f(s)~ ds\right\|_{L^p(a,b;X)} + \frac{1}{2} \left\|\int_.^b e^{(s-.)L_\lambda} f(s)~ ds\right\|_{L^p(a,b;X)},
\end{array}$$
and from \eqref{estim Z et W}, \eqref{estim pazy}, \refL{Lem LMT}, \refL{Lem LMT 2} and \refR{Rem LMT}, we have
\begin{equation}\label{estim L v0 proof}
\left\|L_\lambda v_0(.)\right\|_{L^p(a,b;X)} \leqslant \frac{C}{\sqrt{1+\sqrt{\left|\lambda + \frac{k^2}{4}\right|}}} \,\left\|f\right\|_{L^p(a,b;X)}.
\end{equation}
Moreover, since $v_0 = L_\lambda^{-1} L_\lambda v_0(.)$, it follows that 
$$\left\| v_0\right\|_{L^p(a,b;X)} \leqslant \left\|L_\lambda^{-1}\right\|_{\L(X)} \left\|L_\lambda v_0(.)\right\|_{L^p(a,b;X)}.$$
Thus, from \eqref{estim L-1} and \eqref{estim L v0 proof}, it follows that
\begin{equation}\label{estim v0 proof}
\left\|v_0\right\|_{L^p(a,b;X)} \leqslant \frac{C}{1+\sqrt{\left|\lambda + \frac{k^2}{4}\right|}} \,\left\|f\right\|_{L^p(a,b;X)}.
\end{equation}
In the same way, from \eqref{F'0}, we have
\begin{equation*}
\begin{array}{l}
L_\lambda F'_{0,f}(\gamma) = \\ \ecart
\dis - \frac{1}{2} Z e^{(\gamma-a)M_\lambda} L_\lambda\int_a^b e^{(s-a)M_\lambda} v_0(s)~ ds - \frac{1}{2} Z e^{cM_\lambda} e^{(b-\gamma)M_\lambda} L_\lambda \int_a^b e^{(s-a)M_\lambda} v_0(s)~ ds \\ \ecart

 + \dis \frac{1}{2} Z e^{(b-\gamma)M_\lambda} L_\lambda \int_a^b e^{(b-s)M_\lambda} v_0(s)~ ds + \frac{1}{2} Z e^{cM_\lambda} e^{(\gamma-a)M_\lambda}L_\lambda\int_a^b e^{(b-s)M_\lambda} v_0(s)~ ds \\ \ecart

 \dis + \frac{1}{2} L_\lambda\int_a^\gamma e^{(\gamma-s)M_\lambda} v_0(s)~ ds - \frac{1}{2} L_\lambda \int_\gamma^b e^{(s-\gamma)M_\lambda} v_0(s)~ ds,
\end{array}
\end{equation*}
hence
\begin{equation*}
\begin{array}{l}
L_\lambda F'_{0,f}(\gamma) = \\ \ecart
\dis - \frac{1}{2} Z e^{(\gamma-a)M_\lambda}\int_a^b e^{(s-a)M_\lambda} L_\lambda v_0(s)~ ds - \frac{1}{2} Z e^{cM_\lambda} e^{(b-\gamma)M_\lambda} \int_a^b e^{(s-a)M_\lambda} L_\lambda v_0(s)~ ds \\ \ecart

+ \dis \frac{1}{2} Z e^{(b-\gamma)M_\lambda} \int_a^b e^{(b-s)M_\lambda} L_\lambda v_0(s)~ ds + \frac{1}{2} Z e^{cM_\lambda} e^{(\gamma-a)M_\lambda}\int_a^b e^{(b-s)M_\lambda} L_\lambda v_0(s)~ ds \\ \ecart

 \dis + \frac{1}{2} \int_a^\gamma e^{(\gamma-s)M_\lambda} L_\lambda v_0(s)~ ds - \frac{1}{2}  \int_\gamma^b e^{(s-\gamma)M_\lambda} L_\lambda v_0(s)~ ds.
\end{array}
\end{equation*}
Moreover, we have
\begin{equation*}
\begin{array}{l}
\left\|e^{(.-a)T_\lambda} L_\lambda F'_{0,f}(\gamma)\right\|_{L^p(a,b;X)}  \leqslant \\ \ecart 

\dis \frac{1}{2} \left\|Z\right\|_{\L(X)} \left\|e^{(\gamma-a)M_\lambda}\right\|_{\L(X)} \left\|e^{(.-a)T_\lambda} \int_a^b e^{(s-a)M_\lambda} L_\lambda v_0(s)~ ds\right\|_{L^p(a,b;X)} \\ \ecart

 \dis + \frac{1}{2} \left\|Z\right\|_{\L(X)} \left\|e^{cM_\lambda}\right\|_{\L(X)} \left\|e^{(b-\gamma)M_\lambda}\right\|_{\L(X)} \left\|e^{(.-a)T_\lambda} \int_a^b e^{(s-a)M_\lambda} L_\lambda v_0(s)~ ds\right\|_{L^p(a,b;X)} \\ \ecart

 + \dis \frac{1}{2} \left\|Z\right\|_{\L(X)} \left\|e^{(b-\gamma)M_\lambda} \right\|_{\L(X)} \left\|e^{(.-a)T_\lambda} \int_a^b e^{(b-s)M_\lambda} L_\lambda v_0(s)~ ds\right\|_{L^p(a,b;X)} \\ \ecart

 + \dis \frac{1}{2} \left\|Z\right\|_{\L(X)} \left\|e^{cM_\lambda}\right\|_{\L(X)} \left\|e^{(\gamma-a)M_\lambda}\right\|_{\L(X)} \left\|e^{(.-a)T_\lambda} \int_a^b e^{(b-s)M_\lambda} L_\lambda v_0(s)~ ds\right\|_{L^p(a,b;X)} \\ \ecart

 \dis + \frac{1}{2} \left\|e^{(.-a)T_\lambda}\int_a^\gamma e^{(\gamma-s)M_\lambda} L_\lambda v_0(s)~ ds\right\|_{L^p(a,b;X)} + \frac{1}{2} \left\|e^{(.-a)T_\lambda}\int_\gamma^b e^{(s-\gamma)M_\lambda} L_\lambda v_0(s)~ ds\right\|_{L^p(a,b;X)}.
\end{array}
\end{equation*}
Since $L_\lambda v_0(.) \in L^p(a,b;X)$, from \eqref{estim Z et W}, \eqref{estim pazy}, \refL{Lem LMT 2} and \refR{Rem LMT}, we have
\begin{equation*}
\left\|e^{(.-a)T_\lambda} L_\lambda F'_{0,f}(\gamma)\right\|_{L^p(a,b;X)} \leqslant \frac{C}{\sqrt{1+\sqrt{\left|\lambda + \frac{k^2}{4}\right|}}} \left\|L_\lambda v_0(.)\right\|_{L^p(a,b;X)}.
\end{equation*}
Moreover, from \eqref{estim L-1}, we have 
$$\begin{array}{lll}
\dis\left\|e^{(.-a)T_\lambda} F'_{0,f}(\gamma)\right\|_{L^p(a,b;X)} &=& \dis  \left\|e^{(.-a)T_\lambda} L_\lambda L_\lambda^{-1} F'_{0,f}(\gamma)\right\|_{L^p(a,b;X)} \\ \ecart

& \leqslant & \dis \left\|L_\lambda^{-1}\right\|_{\L(X)}\left\|e^{(.-a)T_\lambda} L_\lambda F'_{0,f}(\gamma)\right\|_{L^p(a,b;X)} \\ \ecart

& \leqslant & \dis\frac{C}{1+\sqrt{\left|\lambda + \frac{k^2}{4}\right|}} \left\|L_\lambda v_0(.)\right\|_{L^p(a,b;X)}.
\end{array}$$
In the same way, we obtain that
\begin{equation*}
\left\|e^{(b-.)T_\lambda} L_\lambda F'_{0,f}(\gamma)\right\|_{L^p(a,b;X)} \leqslant \frac{C}{\sqrt{1+\sqrt{\left|\lambda + \frac{k^2}{4}\right|}}} \left\|L_\lambda v_0(.)\right\|_{L^p(a,b;X)},
\end{equation*}
and
\begin{equation*}
\left\|e^{(b-.)T_\lambda} F'_{0,f}(\gamma)\right\|_{L^p(a,b;X)} \leqslant \frac{C}{1+\sqrt{\left|\lambda + \frac{k^2}{4}\right|}} \left\|L_\lambda v_0(.)\right\|_{L^p(a,b;X)}.
\end{equation*}
Finally, from \eqref{estim L v0 proof}, we obtain the expected results.
\end{proof}

\begin{Prop}\label{Prop estim norme}
Assume that $(\H_1)$, $(\H_2)$, $(\H_3)$ and $(\H_4)$ hold. Then
\begin{enumerate}
\item for $i=1,2,5$, there exists constants $C_i > 0$, such that, for all $\dis\lambda \in -\frac{k^2}{4} + \left(\CC \setminus \overline{S_{2\theta_A}}\right)$ and all $f\in L^p(a,b;X)$, we have
$$\left\|(-\A_i - \lambda I)^{-1}f \right\|_{\L(X)} \leqslant \frac{C_i}{1 + \left|\lambda + \frac{k^2}{4}\right|}\, \|f\|_{L^p(a,b;X)},$$

\item for $i=3,4$, there exists $C_i > 0$, such that, for all $\dis\lambda \in -\frac{k^2}{4} + \left(\CC \setminus \left(\overline{B\left(0,r\right)} \cup \overline{S_{2\theta_A}}\right)\right)$ and all $f \in L^p(a,b;X)$, we have
$$\left\|(-\A_i - \lambda I)^{-1}f\right\|_{\L(X)} \leqslant \frac{C_i}{1 + \left|\lambda + \frac{k^2}{4}\right|}\,\|f\|_{L^p(a,b;X)}.$$
\end{enumerate}
\end{Prop}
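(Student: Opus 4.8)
The plan is to use the explicit representation of the resolvent and to estimate each of its terms, carefully tracking the powers of $\bigl|\lambda+\tfrac{k^2}{4}\bigr|$. On the indicated domains the resolvent exists by \refP{Prop rho(-Ai)} (for $i=1,2,5$) and \refP{Prop rho(-Ai)2} (for $i=3,4$), so that $u_i := (-\A_i-\lambda I)^{-1}f$ is the unique classical solution of \eqref{EDA}-(BCi)$_0$. By \refP{Proposition 1} this solution is given by \eqref{Repre u} with $M,L$ replaced by $M_\lambda,L_\lambda$, where the coefficients are the $\alpha_j$ (equivalently $K_j$) computed in the proofs of \refT{Th Carre} and \refT{thmprinc gen} specialized to homogeneous data $\varphi_1=\cdots=\varphi_4=0$. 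The guiding principle is that each occurrence of $M_\lambda^{-1}$ or $L_\lambda^{-1}$, by \eqref{estim M-1}--\eqref{estim L-1}, and each integral operator, by \refL{Lem LMT}--\refL{Lem LMT 2} together with \refR{Rem LMT}, contributes a factor $\bigl(1+\sqrt{|\lambda+k^2/4|}\bigr)^{-1/2}$; accumulating four such factors produces $\bigl(1+\sqrt{|\lambda+k^2/4|}\bigr)^{-2}\leqslant C\bigl(1+|\lambda+k^2/4|\bigr)^{-1}$, which is the desired bound.

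First I would treat the case $i=1,2,5$. For $i=1,5$ the homogeneous solution reduces to $u_i=F_{0,f}$, so everything hinges on estimating $F_{0,f}$: by \eqref{F Th carre} it equals $M_\lambda^{-1}$ acting on integrals of $v_0$, dressed with the bounded operators $Z=(I-e^{2cM_\lambda})^{-1}$ (bounded by \refL{Lem FLMT}) and the uniformly bounded exponentials $e^{cM_\lambda}$ (\refL{Lem DY}). Combining $\|M_\lambda^{-1}\|$ from \eqref{estim M-1}, the integral estimates of \refL{Lem LMT}--\refL{Lem LMT 2}, and the bound $\|v_0\|_{L^p}\leqslant \frac{C}{1+\sqrt{|\lambda+k^2/4|}}\|f\|_{L^p}$ from \refL{Lem estim norme F'}, yields $\|F_{0,f}\|_{L^p}\leqslant \frac{C}{1+|\lambda+k^2/4|}\|f\|_{L^p}$. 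For $i=2$ there are additional boundary contributions $e^{(\cdot-a)M_\lambda}\alpha_1$, etc.; here the coefficients simplify ($\alpha_2=\alpha_4=0$ and $\alpha_1=(I+e^{cM_\lambda})^{-1}M_\lambda^{-1}\widetilde\varphi_1$ in \eqref{ALPHA 1 2 3 4}, with $\widetilde\varphi_1$ built from $F'_{0,f}(a),F'_{0,f}(b)$ via \eqref{PHI 1 2 tilde}). Using commutativity of the functional calculus of $A_{k/2}$, the bound $\|(I\pm e^{cM_\lambda})^{-1}\|\leqslant C$ (\refL{Lem FLMT}), $\|M_\lambda^{-1}\|\leqslant \frac{C}{\sqrt{1+\sqrt{|\lambda+k^2/4|}}}$, and the packaged estimate $\|e^{(\cdot-a)M_\lambda}F'_{0,f}(\gamma)\|_{L^p}\leqslant \frac{C}{(1+\sqrt{|\lambda+k^2/4|})^{3/2}}\|f\|_{L^p}$ of \refL{Lem estim norme F'}, each such term is $\leqslant \frac{C}{(1+\sqrt{|\lambda+k^2/4|})^{2}}\|f\|_{L^p}$. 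Summing the finitely many terms gives statement~1.

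The case $i=3,4$ follows the same scheme, the only new feature being that the coefficients in \eqref{ali gen}--\eqref{ali2 gen} carry the extra factor $B_\lambda^{-1}(L_\lambda+M_\lambda)U_\lambda^{-1}$ (or $V_\lambda^{-1}$). On the domain $|\lambda+k^2/4|\geqslant r$ we have at our disposal $\|U_\lambda^{-1}\|,\|V_\lambda^{-1}\|\leqslant 2$ from \eqref{estim U-1 V-1} and $\|B_\lambda^{-1}\|\leqslant \frac{1}{2\sqrt{|\lambda+k^2/4|}}$ from \eqref{estim B-1}. The delicate point is that $L_\lambda+M_\lambda$ is unbounded and cannot be estimated in isolation; I would commute it past the exponential semigroup and write $L_\lambda+M_\lambda=L_\lambda\bigl(I+M_\lambda L_\lambda^{-1}\bigr)$, with $M_\lambda L_\lambda^{-1}$ bounded by \refL{Lem LM^-1}, so that the term reduces to the already-packaged quantity $\|e^{(\cdot-a)T_\lambda}L_\lambda F'_{0,f}(\gamma)\|_{L^p}\leqslant \frac{C}{1+\sqrt{|\lambda+k^2/4|}}\|f\|_{L^p}$ of \refL{Lem estim norme F'}. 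Multiplying by $\|B_\lambda^{-1}\|$ and using $|\lambda+k^2/4|\geqslant r$ to absorb the factor $(\sqrt{|\lambda+k^2/4|})^{-1}$, namely $\frac{1}{\sqrt{t}\,(1+\sqrt{t})}\leqslant \frac{C}{1+t}$ for $t\geqslant r$, gives the bound $\frac{C}{1+|\lambda+k^2/4|}\|f\|_{L^p}$ for each term, hence statement~2.

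The main obstacle is precisely this bookkeeping: every term of the representation must be reorganized, via the commutativity of the functional calculus and the boundedness of $M_\lambda L_\lambda^{-1}$ and $L_\lambda M_\lambda^{-1}$ (\refL{Lem LM^-1}), into a product of operators whose $\lambda$-scaling is controlled by \refL{Lem estim norme F'}, \eqref{estim M-1}--\eqref{estim L-1}, \eqref{estim B-1} and \eqref{estim U-1 V-1}, so that the final power of $(1+\sqrt{|\lambda+k^2/4|})$ lands on exactly $-2$. The genuinely subtle parts are the treatment of the unbounded combination $B_\lambda^{-1}(L_\lambda+M_\lambda)$ for $i=3,4$, which is rescued only by the smoothing of the semigroup encapsulated in the $L_\lambda F'_{0,f}$-estimates, and the verification that all constants $C$ remain uniform in $\lambda$ over the respective sectorial domains.
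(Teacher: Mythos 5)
Your proposal is correct and follows essentially the same route as the paper's proof: the explicit representation of $u_i$ with the homogeneous-data coefficients, the packaged estimates of \refL{Lem estim norme F'} combined with \eqref{estim M-1}--\eqref{estim L-1}, \eqref{estim Z et W} and \refL{Lem LMT}--\refL{Lem LMT 2} to accumulate the power $\left(1+\sqrt{\left|\lambda+\frac{k^2}{4}\right|}\right)^{-2}$, and, for $i=3,4$, the factorization $(L_\lambda+M_\lambda)L_\lambda^{-1}=I+M_\lambda L_\lambda^{-1}$ from \refL{Lem LM^-1} together with $\|B_\lambda^{-1}\|\leqslant \frac{1}{2\sqrt{\left|\lambda+\frac{k^2}{4}\right|}}$ and the lower bound \eqref{minoration lambda + k2/4} to absorb the remaining factor. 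This is exactly the paper's bookkeeping, including the final inequality $\frac{C}{\sqrt{\left|\lambda+\frac{k^2}{4}\right|}+\left|\lambda+\frac{k^2}{4}\right|}\leqslant \frac{C'}{1+\left|\lambda+\frac{k^2}{4}\right|}$ on the restricted domain.
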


\begin{proof}

In the sequel, $C > 0$ will denote various constants which are independent of $f$ and $\lambda$. Moreover, we will use that, for $i = 1,2,3,4,5$, we have $(-\A_i - \lambda I)^{-1}f = u_i$, where $u_i$ is the unique classical solution of \eqref{EDA}-(BCi)$_0$.
\begin{enumerate}
\item Let $\dis\lambda \in -\frac{k^2}{4} + \left(\CC \setminus \overline{S_{2\theta_A}}\right)$. 

We first focus on $-\A_1$. Our aim is to obtain that
$$\left\|(-\A_1 - \lambda I)^{-1}f \right\|_{\L(X)} = \| u_1 \|_{L^p(a,b;X)} \leqslant \frac{C}{1 + \left|\lambda + \frac{k^2}{4}\right|} \,\|f\|_{L^p(a,b;X)}.$$
To this end, we first recall, from \refT{Th Carre}, the expression of $u_1 = F_{0,f}$, given, for all $x \in [a,b]$, by 
\begin{equation}\label{F0}
\begin{array}{rll}
F_{0,f}(x) & = & \dis \frac{1}{2} \left(e^{(b-x)M_\lambda} e^{cM_\lambda} - e^{(x-a)M_\lambda}\right) Z M_\lambda^{-1} \int_a^b e^{(s-a)M_\lambda} v_0(s)~ ds \\ \ecart
&&+ \dis \frac{1}{2} \left(e^{(x-a)M_\lambda} e^{cM_\lambda} - e^{(b-x)M_\lambda}\right) Z M_\lambda^{-1} \int_a^b e^{(b-s)M_\lambda} v_0(s)~ ds \\ \ecart
&& + \dis \frac{1}{2} M_\lambda^{-1} \dis\int_a^x e^{(x-s)M_\lambda} v_0(s)~ ds + \frac{1}{2} M_\lambda^{-1} \int_x^b e^{(s-x)M_\lambda} v_0(s)~ ds,
\end{array}
\end{equation}
where $v_0$ is given by \eqref{v0} and $Z = \left(I-e^{2cM_\lambda} \right)^{-1}$.

Moreover, following the same step as in the proof of \refL{Lem estim norme F'}, we have
\begin{equation*}
\|F_{0,f}\|_{L^p(a,b;X)} \leqslant \frac{C}{1+ \sqrt{\left|\lambda + \frac{k^2}{4}\right|}}\, \|v_0\|_{L^p(a,b;X)},
\end{equation*}
and from \refL{Lem estim norme F'}, we have
\begin{equation*}
\left\|v_0\right\|_{L^p(a,b;X)} \leqslant \frac{C}{1 + \sqrt{\left|\lambda + \frac{k^2}{4}\right|}} \,\|f\|_{L^p(a,b;X)}.
\end{equation*}
Then, we obtain
\begin{equation}\label{estim u1}
\begin{array}{lll}
\dis \left\|u_1\right\|_{L^p(a,b;X)} = \left\|F_{0,f} \right\|_{L^p(a,b;X)} & \leqslant & \dis \frac{C}{1 + 2\,\sqrt{\left|\lambda + \frac{k^2}{4}\right|} + \left|\lambda + \frac{k^2}{4}\right|}\, \|f\|_{L^p(a,b;X)} \\ \ecart
& \leqslant & \dis \frac{C}{1 + \left|\lambda + \frac{k^2}{4}\right|}\, \|f\|_{L^p(a,b;X)}.
\end{array}
\end{equation}
From \refT{Th Carre}, since $u_5 = u_1$, the result follows for $u_5$. 

Our aim is now to show that 
$$\left\|(-\A_2 - \lambda I)^{-1}f \right\|_{\L(X)} = \| u_2 \|_{L^p(a,b;X)} \leqslant \frac{C}{1 + \left|\lambda + \frac{k^2}{4}\right|}\,\|f\|_{L^p(a,b;X)}.$$
We first recall, from \eqref{Repre u bis}, \eqref{ALPHA 1 2 3 4} and \eqref{PHI 1 2 tilde}, that $u_2$ is given, for all $x \in [a,b]$, by
\begin{equation*}
\begin{array}{rcl}
u_2(x)&=&\dis~~ \,\left(e^{(x-a) M_\lambda} - e^{(b-x) M_\lambda}\right)\alpha_1 + \left(e^{(x-a) L_\lambda} - e^{(b-x) L_\lambda}\right)\alpha_2 \\ \ecart
&&\dis+\left(e^{(x-a) M_\lambda} + e^{(b-x) M_\lambda}\right)\alpha_3 + \left(e^{(x-a) L_\lambda} + e^{(b-x) L_\lambda}\right)\alpha_4 + F_{0 ,f}(x),
\end{array}
\end{equation*}
where $F_{0,f} = u_1$ is given by \eqref{F0} and
\begin{equation*}
\left\{\begin{array}{rcl}
\alpha_1 & = & \dis - \frac{1}{2} \left(I + e^{cM_\lambda}\right)^{-1} M_\lambda^{-1} \left(F_{0,f}'(a) + F'_{0,f}(b)\right) \\ \ecart 

\alpha_2 & = & \dis 0 \\ \ecart

\alpha_3 & = & \dis - \frac{1}{2} \left(I - e^{cM_\lambda}\right)^{-1} M_\lambda^{-1} \left(F_{0,f}'(a) - F'_{0,f}(b)\right) \\ \ecart 

\alpha_4 & = & \dis 0.
\end{array}\right.
\end{equation*}
Thus, we have
\begin{equation*}
\begin{array}{rll}
\left\|u_2\right\|_{L^p(a,b;X)} & \leqslant & \dis ~~\frac{1}{2} \left\|\left(I + e^{cM_\lambda}\right)^{-1}\right\|_{\L(X)}\|M_\lambda^{-1}\|_{\L(X)} \left\|e^{(.-a)M_\lambda}F'_{0,f}(a)\right\|_{L^p(a,b;X)} \\ \ecart

&&\dis + \frac{1}{2} \left\|\left(I + e^{cM_\lambda}\right)^{-1}\right\|_{\L(X)}\|M_\lambda^{-1}\|_{\L(X)} \left\|e^{(.-a)M_\lambda}F'_{0,f}(b)\right\|_{L^p(a,b;X)} \\ \ecart

&& \dis +\frac{1}{2} \left\|\left(I + e^{cM_\lambda}\right)^{-1}\right\|_{\L(X)}\|M_\lambda^{-1}\|_{\L(X)} \left\|e^{(b-.)M_\lambda}F'_{0,f}(a)\right\|_{L^p(a,b;X)} \\ \ecart

&&\dis + \frac{1}{2} \left\|\left(I + e^{cM_\lambda}\right)^{-1}\right\|_{\L(X)}\|M_\lambda^{-1}\|_{\L(X)} \left\|e^{(b-.)M_\lambda}F'_{0,f}(b)\right\|_{L^p(a,b;X)} \\ \ecart

&& \dis + \frac{1}{2} \left\|\left(I - e^{cM_\lambda}\right)^{-1}\right\|_{\L(X)}\|M_\lambda^{-1}\|_{\L(X)} \left\|e^{(.-a)M_\lambda}F'_{0,f}(a)\right\|_{L^p(a,b;X)} \\ \ecart

&&\dis + \frac{1}{2} \left\|\left(I - e^{cM_\lambda}\right)^{-1}\right\|_{\L(X)}\|M_\lambda^{-1}\|_{\L(X)} \left\|e^{(.-a)M_\lambda}F'_{0,f}(b)\right\|_{L^p(a,b;X)} \\ \ecart

&& \dis +\frac{1}{2} \left\|\left(I - e^{cM_\lambda}\right)^{-1}\right\|_{\L(X)}\|M_\lambda^{-1}\|_{\L(X)} \left\|e^{(b-.)M_\lambda}F'_{0,f}(a)\right\|_{L^p(a,b;X)} \\ \ecart

&&\dis + \frac{1}{2} \left\|\left(I - e^{cM_\lambda}\right)^{-1}\right\|_{\L(X)}\|M_\lambda^{-1}\|_{\L(X)} \left\|e^{(b-.)M_\lambda}F'_{0,f}(b)\right\|_{L^p(a,b;X)} \\ \ecart
&& \dis + \left\|F_{0,f}\right\|_{L^p(a,b;X)},
\end{array}
\end{equation*}
and from \eqref{estim Z et W}, \eqref{estim M-1}, \refL{Lem estim norme F'} and \eqref{estim u1}, we have
$$\| u_2 \|_{L^p(a,b;X)} \leqslant \frac{C}{1 + 2\sqrt{\left|\lambda + \frac{k^2}{4}\right|} + \left|\lambda + \frac{k^2}{4}\right|}\,\|f\|_{L^p(a,b;X)} \leqslant \frac{C}{1 + \left|\lambda + \frac{k^2}{4}\right|}\,\|f\|_{L^p(a,b;X)}.$$

\item Let $\dis \lambda \in -\frac{k^2}{4} + \left(\CC \setminus \left(\overline{B\left(0,r\right)} \cup \overline{S_{2\theta_A}}\right)\right)$. 

Our aim is to obtain that
$$ \left\|(-\A_i - \lambda I)^{-1}f\right\|_{\L(X)} = \| u_i \|_{L^p(a,b;X)} \leqslant \frac{C}{1 + \left|\lambda + \frac{k^2}{4}\right|} \, \|f\|_{L^p(a,b;X)}, \quad \text{for } i=3,4.$$
Recall, from \eqref{Repre u bis}, \eqref{tildephi gen} and \eqref{ali gen}, that $u_3$ is given by
\begin{equation}\label{u3}
\begin{array}{rcl}
u_3(x)&=&\dis~~ \,\left(e^{(x-a) M_\lambda} - e^{(b-x) M_\lambda}\right)\alpha_1 + \left(e^{(x-a) L_\lambda} - e^{(b-x) L_\lambda}\right)\alpha_2 \\ \ecart
&&\dis+\left(e^{(x-a) M_\lambda} + e^{(b-x) M_\lambda}\right)\alpha_3 + \left(e^{(x-a) L_\lambda} + e^{(b-x) L_\lambda}\right)\alpha_4 + F_{0 ,f}(x),
\end{array}
\end{equation}
where $F_{0,f} = u_1$ is given by \eqref{F0} and
\begin{equation*}
\left\{\begin{array}{rcl}
\alpha_1 & = & \dis~~ \frac{1}{2} B_\lambda^{-1} (L_\lambda+M_\lambda) U_\lambda^{-1} \left(I - e^{cL_\lambda}\right) \left(F_{0,f}'(a) + F'_{0,f}(b)\right) \\ \ecart

\alpha_2 & = & \dis -\frac{1}{2} B_\lambda^{-1} (L_\lambda+M_\lambda) U_\lambda^{-1} \left(I - e^{cM_\lambda}\right) \left(F_{0,f}'(a) + F'_{0,f}(b)\right) \\ \ecart

\alpha_3 & = & \dis ~~\frac{1}{2} B_\lambda^{-1} (L_\lambda+M_\lambda) V_\lambda^{-1} \left(I + e^{cL_\lambda}\right) \left(F_{0,f}'(a) - F'_{0,f}(b)\right)  \\ \ecart

\alpha_4 & = & \dis -\frac{1}{2} B_\lambda^{-1} (L_\lambda+M_\lambda) V_\lambda^{-1} \left(I + e^{cM_\lambda}\right) \left(F_{0,f}'(a) - F'_{0,f}(b)\right).
\end{array}\right.
\end{equation*}
Moreover, let 
$$T_\lambda = M_\lambda ~\, \text{or} ~\, L_\lambda,$$
then from \eqref{estim B-1}, \eqref{estim U-1 V-1}, \eqref{estim Z et W}, \refL{Lem LM^-1} and \refL{Lem estim norme F'}, for $i=1,2,3,4$, we have
$$\begin{array}{lll}
\|e^{(.-a)T_\lambda} \alpha_i \|_{L(a,b;X)} & \leqslant & \dis \frac{C}{\sqrt{\left|\lambda + \frac{k^2}{4}\right|}} \left\|e^{(.-a)T_\lambda}(L_\lambda+M_\lambda)F'_{0,f}(a)\right\|_{L^p(a,b;X)} \\ \ecart

&& \dis + \frac{C}{\sqrt{\left|\lambda + \frac{k^2}{4}\right|}} \left\|e^{(.-a)T_\lambda}(L_\lambda+M_\lambda)F'_{0,f}(b)\right\|_{L^p(a,b;X)} \\ \\

& \leqslant & \dis \frac{C}{\sqrt{\left|\lambda + \frac{k^2}{4}\right|}} \left\|(L_\lambda+M_\lambda)L_\lambda^{-1}\right\|_{\L(X)} \left\|e^{(.-a)T_\lambda}L _\lambda F'_{0,f}(a)\right\|_{L^p(a,b;X)} \\ \ecart
&& \dis + \frac{C}{\sqrt{|\lambda + \frac{k^2}{4}|}} \left\|(L_\lambda+M_\lambda)L_\lambda^{-1}\right\|_{\L(X)} \left\|e^{(.-a)T_\lambda}L_\lambda F'_{0,f}(b)\right\|_{L^p(a,b;X)} \\ \\

& \leqslant & \dis \frac{C}{\sqrt{\left|\lambda + \frac{k^2}{4}\right|}} \left\|I + M_\lambda L_\lambda^{-1}\right\|_{\L(X)} \left\|e^{(.-a)T_\lambda}L _\lambda F'_{0,f}(a)\right\|_{L^p(a,b;X)} \\ \ecart

&& \dis +\frac{C}{\sqrt{\left|\lambda + \frac{k^2}{4}\right|}} \left\|I + M_\lambda L_\lambda^{-1}\right\|_{\L(X)} \left\|e^{(.-a)T_\lambda}L_\lambda F'_{0,f}(b)\right\|_{L^p(a,b;X)},
\end{array}$$
hence
$$\begin{array}{lll}
\|e^{(.-a)T_\lambda} \alpha_i \|_{L(a,b;X)} &\leqslant & \dis \frac{C}{\sqrt{\left|\lambda + \frac{k^2}{4}\right|}} \left( 1 + \left\|M_\lambda L_\lambda^{-1}\right\|_{\L(X)}\right) \left\|e^{(.-a)T_\lambda}L _\lambda F'_{0,f}(a)\right\|_{L^p(a,b;X)} \\ \ecart

&& \dis +  \frac{C}{\sqrt{\left|\lambda + \frac{k^2}{4}\right|}} \left( 1 + \left\|M_\lambda L_\lambda^{-1}\right\|_{\L(X)}\right) \left\|e^{(.-a)T_\lambda}L_\lambda F'_{0,f}(b)\right\|_{L^p(a,b;X)} \\ \\ 

& \leqslant & \dis \frac{C}{\sqrt{\left|\lambda + \frac{k^2}{4}\right|}} \left\|e^{(.-a)T_\lambda}L _\lambda F'_{0,f}(a)\right\|_{L^p(a,b;X)} \\ \ecart
&&\dis + \frac{C}{\sqrt{\left|\lambda + \frac{k^2}{4}\right|}} \left\|e^{(.-a)T_\lambda}L_\lambda F'_{0,f}(b)\right\|_{L^p(a,b;X)} \\ \\

& \leqslant & \dis \frac{C}{\sqrt{\left|\lambda + \frac{k^2}{4}\right|} + \left|\lambda + \frac{k^2}{4}\right|}\, \|f\|_{L^p(a,b;X)}.
\end{array}$$
In the same way, we obtain that
$$\left\|e^{(b-.)T_\lambda} \alpha_i \right\|_{L(a,b;X)} \leqslant \frac{C}{\sqrt{\left|\lambda + \frac{k^2}{4}\right|} + \left|\lambda + \frac{k^2}{4}\right|}\, \|f\|_{L^p(a,b;X)}.$$
Moreover, due to \eqref{minoration lambda + k2/4}, we have 
$$\sqrt{\left|\lambda+\frac{k^2}{4}\right|} \geqslant \sqrt{r} >0.$$
Hence
$$\frac{C}{\sqrt{\left|\lambda + \frac{k^2}{4}\right|} + \left|\lambda + \frac{k^2}{4}\right|} \leqslant \frac{C}{\sqrt{r} + \left|\lambda + \frac{k^2}{4}\right|}.$$
Therefore, for $i=1,2,3,4$, we have
\begin{equation}\label{estim alpha i BC3 1}
\left\|e^{(.-a)T_\lambda} \alpha_i \right\|_{L(a,b;X)} \leqslant \frac{C}{1 + \left|\lambda + \frac{k^2}{4}\right|}\, \|f\|_{L^p(a,b;X)},
\end{equation}
and
\begin{equation}\label{estim alpha i BC3 2}
\left\|e^{(b-.)T_\lambda} \alpha_i \right\|_{L(a,b;X)} \leqslant \frac{C}{1 + \left|\lambda + \frac{k^2}{4}\right|}\, \|f\|_{L^p(a,b;X)},
\end{equation}
Finally, from \eqref{estim u1}, \eqref{u3}, \eqref{estim alpha i BC3 1} and \eqref{estim alpha i BC3 2}, we obtain
$$\|u_3\|_{L^p(a,b;X)} \leqslant \frac{C}{1 + \left|\lambda + \frac{k^2}{4}\right|}\, \|f\|_{L^p(a,b;X)}.$$
Now, we focus ourselves on $-\A_4$. Recall that, from \eqref{Repre u bis}, \eqref{PHI 1 2 tilde} and \eqref{ali2 gen}, $u_4$ is given, for all $x \in [a,b]$, by
\begin{equation}\label{u4}
\begin{array}{rcl}
u_4(x)&=&\dis~~ \,\left(e^{(x-a) M_\lambda} - e^{(b-x) M_\lambda}\right)\alpha_1 + \left(e^{(x-a) L_\lambda} - e^{(b-x) L_\lambda}\right)\alpha_2 \\ \ecart
&&\dis+\left(e^{(x-a) M_\lambda} + e^{(b-x) M_\lambda}\right)\alpha_3 + \left(e^{(x-a) L_\lambda} + e^{(b-x) L_\lambda}\right)\alpha_4 + F_{0 ,f}(x),
\end{array}
\end{equation}
where $F_{0,f} = u_1$ is given by \eqref{F0} and
\begin{equation*}
\left\{\begin{array}{rcl}
\alpha_1 & = & \dis - \frac{1}{2} B_\lambda^{-1} (L_\lambda+M_\lambda) V_\lambda^{-1} \left(I - e^{cL_\lambda}\right) L_\lambda  M_\lambda^{-1} \left(F_{0,f}'(a) + F'_{0,f}(b)\right) \\ \ecart

\alpha_2 & = & \dis ~~\frac{1}{2} B_\lambda^{-1} (L_\lambda +M_\lambda) V_\lambda^{-1} \left(I - e^{cM_\lambda}\right) M_\lambda L_\lambda^{-1} \left(F_{0,f}'(a) + F'_{0,f}(b)\right) \\ \ecart

\alpha_3 & = & \dis - \frac{1}{2} B_\lambda^{-1} (L_\lambda+M_\lambda) U_\lambda^{-1} \left(I + e^{cL_\lambda}\right) L_\lambda M_\lambda^{-1} \left(F_{0,f}'(a) - F'_{0,f}(b)\right)  \\ \ecart

\alpha_4 & = & \dis ~~\frac{1}{2} B_\lambda^{-1} (L_\lambda+M_\lambda) U_\lambda^{-1} \left(I + e^{cM_\lambda}\right) M_\lambda L_\lambda^{-1} \left(F_{0,f}'(a) - F'_{0,f}(b)\right).
\end{array}\right.
\end{equation*}
As previously, from \eqref{estim B-1}, \eqref{minoration lambda + k2/4}, \eqref{estim U-1 V-1}, \eqref{estim Z et W}, \refL{Lem LM^-1} and \refL{Lem estim norme F'}, for $i=1,2,3,4$, we have
\begin{equation}\label{estim alpha i BC4 1}
\left\|e^{(.-a)T_\lambda} \alpha_i \right\|_{L(a,b;X)} \leqslant \frac{C}{1 + \left|\lambda + \frac{k^2}{4}\right|}\, \|f\|_{L^p(a,b;X)},
\end{equation}
and
\begin{equation}\label{estim alpha i BC4 2}
\left\|e^{(b-.)T_\lambda} \alpha_i \right\|_{L(a,b;X)} \leqslant \frac{C}{1 + \left|\lambda + \frac{k^2}{4}\right|}\, \|f\|_{L^p(a,b;X)}.
\end{equation}
Finally, from \eqref{estim u1}, \eqref{u4}, \eqref{estim alpha i BC4 1} and \eqref{estim alpha i BC4 2}, we obtain
$$\|u_4\|_{L^p(a,b;X)} \leqslant \frac{C}{1 + \left|\lambda + \frac{k^2}{4}\right|}\, \|f\|_{L^p(a,b;X)}.$$ 
\end{enumerate}
\end{proof}

\subsection{Proof of \refP{Prop Sect}}\label{sect sol Prob P}

For $i=1,2,5$. From \refP{Prop rho(-Ai)}, we have 
$$\sigma\left(-\A_i+\frac{k^2}{4} I\right) \subset \overline{S_{2\theta_A}},$$
and from \refP{Prop estim norme}, for all $\lambda \in \dis \left(\CC \setminus \overline{S_{2\theta_A}}\right)$, there exist $C_i \geqslant 1$, such that
$$\left\|\left(-\A_i + \frac{k^2}{4} I- \lambda I\right)^{-1}\right\|_{\L(X)} \leqslant \frac{C_i}{1 + |\lambda|}.$$
Then, due to \refD{Def op Sect}, we deduce that $\dis -\A_i + \frac{k^2}{4} I$, is a sectorial operator of angle $2\theta_A$.

For $i=3,4$. From \refP{Prop rho(-Ai)2}, there exists $r > 0$, such that  \begin{equation}\label{spectre A3, A4}
\sigma\left(-\A_i + \frac{k^2}{4}I\right) \subset  \overline{B\left(-\frac{k^2}{4},r\right)} \cup \overline{S_{2\theta_A}}.
\end{equation}
Thus
$$\left\{\begin{array}{ll}
\dis \sigma\left(-\A_i + \frac{k^2}{4}I + r I \right) \subset \overline{S_{\frac{\pi}{2}}}, & \text{if }2\theta_A < \frac{\pi}{2} \\ \ecart

\dis \sigma\left(-\A_i + \frac{k^2}{4} I + r I \right) \subset \overline{S_{2\theta_A}}, & \text{if }2\theta_A \geqslant \frac{\pi}{2}.
\end{array}\right.$$
Moreover, due to \refP{Prop estim norme}, there exist constants $C_i \geqslant 1$, such that for all complex numbers $\dis\lambda \in -\frac{k^2}{4} + \left(\CC  \setminus \left(\overline{B\left(0,r\right)} \cup \overline{S_{2\theta_A}}\right)\right)$, we have 
$$\left\|\left(-\A_i + \frac{k^2}{4} I + r I- \lambda I\right)^{-1}\right\|_{\L(X)} \leqslant \frac{C_i}{1 + |\lambda|}.$$
Then, due to \refD{Def op Sect}, we deduce that $\dis -\A_i + \frac{k^2}{4} I + r I$, is a sectorial operator of angle $\dis\frac{\pi}{2}$ if $\dis 2\theta_A < \frac{\pi}{2}$ or of angle $2\theta_A$ if $\dis 2\theta_A \geqslant \frac{\pi}{2}$.

Moreover, when $\dis 2\theta_A \in \left(0,\frac{\pi}{2}\right)$, it is clear from \eqref{spectre A3, A4} that, there exists $r' > r$ large enough, such that 
$$\sigma\left(-\A_i + \frac{k^2}{4} I + r' I \right) \subset \overline{S_{2\theta_A}},$$
and in the same way, if $\theta_A = 0$, there exists $\theta_0 > 0$ such that
$$\sigma\left(-\A_i + \frac{k^2}{4} I + r' I \right) \subset \overline{S_{\theta_0}},$$
which gives the result.

\section{Application} \label{sect Applications}

Let $T>0$ and $i=1,2,3,4,5$. Recall problem \eqref{P parabo}:
\begin{equation}\label{P parabo 2}
\left\{\begin{array}{l}
\dis v'(t) - \A_i v(t) = f(t), \quad t\in (0,T] \\ \ecart
v(0) = v_0,
\end{array}\right.
\end{equation}
which is set in the space 
$$\E := L^p(a,b;X).$$ 
Here $\A_i$ is defined by \eqref{Ai}. We assume that
\begin{center}
\begin{tabular}{l}
$(\H_1)\quad X$ is a UMD space, \\ \ecart
$(\H_2)\quad 0 \in \rho(A)$, \\ \ecart
$(\H_3)\quad -A \in$ BIP$(X,\theta_A),~$ for $\theta_A \in [0, \pi/4)$,\\ \ecart
$(\H_4)\quad [k,+\infty) \in \rho(A)$.
\end{tabular}
\end{center}
Thus, due to \refT{Th final}, $\A_i$ is the infinitesimal generator of a strongly continuous analytic semigroup in $\E$. Note that if $k\geqslant 0$, $(\H_4)$ follows from $(\H_2)$ and $(\H_3)$.

In the sequel, we will consider two cases: 
\begin{enumerate}
\item $f \in W^{\theta,p}(0,T;\E)$, $\theta \in \left(0,\frac{1}{p}\right)$,

\item $f \in C^{\theta}([0,T];\E)$, $\theta \in (0,1)$.
\end{enumerate}

\subsection{First case}

Assume that $f \in W^{\theta,p}(0,T;\E)$ with $\theta \in \left(0, \frac{1}{p}\right)$ (see \cite{da prato-grisvard}, p. 330, for the definition of such a space). From \cite{da prato-grisvard}, Theorem 4.7, p. 334, there exists a unique classical solution $u$ of problem \eqref{P parabo 2} if and only if $v_0 \in \left(D(\A_i),\E\right)_{\frac{1}{p},p}$.

\subsection{Second case}

Now, assume that $f \in C^{\theta}([0,T];\E)$, $\theta \in (0,1)$ and $v_0 \in D(\A_i)$. We then apply Theorem 4.5, p.~53 in \cite{sinestrari} to obtain the existence and the uniqueness of a solution $v \in C([0,T];\E)$ to problem \eqref{P parabo 2} such that
$$v' \in C^\theta([0,T];\E) \quad \text{and} \quad \A_i v(.) \in C^\theta([0,T];\E),$$
if and only if
$$f(0) + \A_i v_0 \in \left(D(\A_i),\E\right)_{1-\theta,+\infty}.$$
Remark that in this case, $\overline{D(\A_i)} = \E$.

\begin{Rem}\hfill
\begin{enumerate}
\item We can prove easily that operators $-\A_1$ and $-\A_5$ are BIP\,$(\E,2\theta_A + \varepsilon)$, $\varepsilon > 0$. Therefore, using the Dore-Venni Theorem, we can solve \eqref{P parabo 2} for $f \in L^p(0,T;\E)$.

\item We can explicit all the above results in the concrete case, that is: 
$$A = A_0 \quad \text{and} \quad \E = L^p(a,b;L^p(\omega)).$$
\end{enumerate}
\end{Rem}

\section*{Acknowledgments} 

\noindent The authors would like to thank the referees for the valuable comments and corrections which help to improve this paper.

\end{document}